\theoremstyle{plain}
\newtheorem{theorem}{Theorem}[section]
\newtheorem{proposition}[theorem]{Proposition}
\newtheorem{lemma}[theorem]{Lemma}
\newtheorem{key-lemma}[theorem]{Key Lemma}
\newtheorem{remark}[theorem]{Remark}
\newtheorem{definition}[theorem]{Definition}
\newtheorem{main theorem}[theorem]{Main Theorem}
\newtheorem{conjecture}[theorem]{Conjecture}
\newlength\savewidth
\newcommand{\interior}{\operatorname{int}}
\newcommand{\lk}{\operatorname{lk}}
\newcommand{\ZZ}{\mathbb{Z}}
\newcommand{\QQ}{\mathbb{Q}}
\newcommand{\RR}{\mathbb{R}}
\newcommand{\CC}{\mathbb{C}}
\newcommand{\HH}{\mathbb{H}}
\newcommand{\QQQ}{\hat{\mathbb{Q}}}
\newcommand{\RRR}{\hat{\mathbb{R}}}
\newcommand{\Conway}{\mbox{\boldmath$S$}^{2}}
\newcommand{\Conways}
{(\mbox{\boldmath$S$}^{2},\mbox{\boldmath$P$})}
\newcommand{\PP}{\mbox{\boldmath$P$}}
\newcommand{\PConway}{\mbox{\boldmath$S$}}
\newcommand{\ptorus}{\mbox{\boldmath$T$}}
\newcommand{\OO}{\mbox{\boldmath$O$}}
\newcommand{\rtangle}[1]{(B^3,t({#1}))}
\newcommand{\tr}{\mbox{$\mathrm{tr}$}}
\newcommand{\Isom}{\mbox{$\mathrm{Isom}$}}
\newcommand{\DD}{\mathcal{D}}
\newcommand{\RGPP}[1]{\hat\Gamma_{#1}}
\newcommand{\RGP}[1]{\Gamma_{#1}}
\newcommand{\curve}{\mathcal{C}}
\newcommand{\plamination}{\mathcal{PL}}
\newcommand{\einv}{\mathcal{E}}
\newcommand{\trg}{\mbox{$\mathrm{trg}$}}
\newcommand{\block}{\mbox{$\mathcal{B}$}}
\newcommand{\eblock}{\mbox{$\check{\mathcal{B}}$}}
\newcommand{\svert}{\,|\,}
\newcommand{\llangle}{\langle\langle}
\newcommand{\rrangle}{\rangle\rangle}
\newcommand{\inn}[1]{\ensuremath{\begin{aligned}[b] &{}\\[-.7cm] &{}\mkern3mu\scriptscriptstyle{\circ}\\[.15cm]
                                  \end{aligned}}\mkern-12mu#1}
    \newcommand{\ee}{\vec{e}}
    \newcommand{\EE}{\vec{E}}
    \newcommand{\TT}{\mathcal{T}}
\newcommand{\ABL}[1]{\mathcal{L}(#1)}
\newcommand{\RBL}[2]{\mathcal{L}(#1,#2)}
\begin{document}

\title{A variation of McShane's identity for 2-bridge links}

\author{Donghi Lee}
\address{Department of Mathematics\\
Pusan National University \\
San-30 Jangjeon-Dong, Geumjung-Gu, Pusan, 609-735, Republic of Korea}
\email{donghi@pusan.ac.kr}

\author{Makoto Sakuma}
\address{Department of Mathematics\\
Graduate School of Science\\
Hiroshima University\\
Higashi-Hiroshima, 739-8526, Japan}
\email{sakuma@math.sci.hiroshima-u.ac.jp}

\subjclass[2000]{Primary 57M25, 20F06 \\
\indent {The first author was supported by Basic Science Research Program
through the National Research Foundation of Korea(NRF) funded
by the Ministry of Education, Science and Technology(2009-0065798).
The second author was supported
by JSPS Grants-in-Aid 22340013 and 21654011.}}


\begin{abstract}
We give a variation of McShane's identity,
which describes the cusp shape of a hyperbolic $2$-bridge link
in terms of the complex translation lengths of
simple loops on the bridge sphere.
We also explicitly determine the set of end invariants of
$SL(2,\CC)$-characters of the once-punctured torus
corresponding to the holonomy representations
of the complete hyperbolic structures of 2-bridge link complements.
\end{abstract}
\maketitle

\begin{center}
{\it Dedicated to Professor Caroline Series
on the occasion of her 60th birthday}
\end{center}

\tableofcontents

\section{Introduction}
In his Ph.D. thesis ~\cite{McShane0},
McShane proved the following remarkable identity
concerning the lengths of simple closed geodesics
on a once-punctured torus
with a complete hyperbolic structure of finite area
(see also \cite{Bowditch1, Nakanishi}):
\begin{equation}
\label{McShane_identity}
\sum_{\beta\in \mathcal{C}}\frac{1}{1+e^{l(\beta)}}=\frac{1}{2}.
\end{equation}
Here $\mathcal{C}$ denotes the set of all simple closed geodesics on
a hyperbolic once-punctured torus, and
$l(\beta)$ denotes the length of a closed geodesic $\beta$.
This identity has been generalized
to cusped hyperbolic surfaces by McShane himself ~\cite{McShane},
to hyperbolic surfaces with cusps and geodesic boundary
by Mirzakhani ~\cite{Mirzakhani},
and to hyperbolic surfaces with cusps, geodesic boundary
and conical singularities
by Tan, Wong and Zhang ~\cite{Tan_Wong_Zhang_2}.
A wonderful application to the Weil-Petersson volume of the
moduli spaces of bordered hyperbolic surfaces was found by
Mirzakhani ~\cite{Mirzakhani}.
Bowditch ~\cite{Bowditch2} showed
that the identity (\ref{McShane_identity})
is also valid for all quasifuchsian punctured torus groups
where $l(\beta)$ is regarded
as the complex translation length of
the hyperbolic isometry corresponding to the closed geodesic $\beta$.
Moreover, he gave a nice variation of the identity
for hyperbolic once-punctured torus bundles,
which describes the cusp shape in terms of the
complex translation lengths of simple loops
on the fiber torus ~\cite{Bowditch3}.
Other $3$-dimensional variations have been obtained by
\cite{AMS, AMS2, Tan_Wong_Zhang_1, Tan_Wong_Zhang_2, Tan_Wong_Zhang_4,
Tan_Wong_Zhang_5, Tan_Wong_Zhang_6, Tan_Wong_Zhang_7}.

The main purpose of this paper is to prove yet another $3$-dimensional variation
of McShane's identity,
which describes the cusp shape of a hyperbolic $2$-bridge link
in terms of the complex translation lengths of
essential simple loops on the bridge sphere
(Theorems ~\ref{MainTheorem1} and \ref{MainTheorem2}).
This proves a conjecture proposed by the second author in \cite{Sakuma}.
The proof of the main results
is applied to the study of
the end invariants
of $SL(2,\CC)$-characters of the once-punctured torus
introduced by Bowditch ~\cite{Bowditch2} and
Tan, Wong and Zhang ~\cite{Tan_Wong_Zhang_1}.
In fact, we explicitly determine the sets of end invariants of
$SL(2,\CC)$-characters of the once-punctured torus
corresponding to the holonomy representations
of the complete hyperbolic structures of $2$-bridge link complements
(Theorems ~\ref{MainTheorem3} and ~\ref{MainTheorem4}).

This paper is organized as follows.
In Section ~\ref{Statement}, we give an explicit statement of the main result.
In Section ~\ref{sec:orbifold}, we recall basic facts
concerning the orbifold fundamental group of the
$(2,2,2,\infty)$-orbifold,
which connects the once-punctured torus
and the $4$-punctured sphere.
In Section ~\ref{sec:Markoff},
we recall basic facts concerning the type-preserving
$PSL(2,\CC)$-representations of the orbifold fundamental group.
In Section ~\ref{sec:canonical},
we describe a certain natural triangulation
of the cusps of hyperbolic $2$-bridge complements,
following \cite{Gueritaud, Sakuma-Weeks}.
In Section ~\ref{sec:proof},
we give a proof of the main result.
In Section ~\ref{sec:longitude},
we give an explicit homological description
of the longitudes of $2$-bridge links
in the definition of the cusp shapes in
Theorems ~\ref{MainTheorem1} and \ref{MainTheorem2}.
In the final section, Section ~\ref{sec:end_invarinat},
we applied the proof of the main results
to the study of the set of end invariants.

The authors would like to thank
Hirotaka Akiyoshi, Brian Bowditch,
Toshihiro Nakanishi, Caroline Series
and Ser Peow Tan for stimulating conversations.

\section{Statement of the main result}
\label{Statement}

Let $\ptorus$, $\PConway$ and $\OO$, respectively,
be the once-punctured torus,
the 4-times punctured sphere, and
the $(2,2,2,\infty)$-orbifold
(i.e., the orbifold with underlying space
a once-punctured sphere and with three cone points
of cone angle $\pi$).
They have $\RR^2-\ZZ^2$ as a common covering space.
To be precise,
let $H$ and $\tilde H$, respectively,
be the groups of transformations on $\RR^2-\ZZ^2$
generated by $\pi$-rotations about points in
$\ZZ^2$ and $(\frac{1}{2}\ZZ)^2$.
Then
$\ptorus=(\RR^2-\ZZ^2)/\ZZ^2$,
$\PConway=(\RR^2-\ZZ^2)/H$ and
$\OO=(\RR^2-\ZZ^2)/\tilde H$.
In particular, there are a $\ZZ_2$-covering
$\ptorus\to \OO$ and
a $\ZZ_2\oplus \ZZ_2$-covering
$\PConway\to \OO$:
the pair of these coverings is called
the {\it Fricke diagram}
and each of $\ptorus$, $\PConway$, and $\OO$
is called a {\it Fricke surface}
(see \cite{Sheingorn}).

A simple loop in a Fricke surface
is said to be {\it essential}
if it does not bound a disk,
a disk with one puncture, or
a disk with one cone point.
Similarly, a simple arc in a Fricke surface
joining punctures is said to be {\it essential}
if it does not cut off a ``monogon'', i.e.,
a disk minus a point on the boundary.
Then the isotopy classes of essential simple loops
(essential simple arcs with one end in a given puncture, respectively)
in a Fricke surface are in one-to-one correspondence with $\QQQ:=\QQ\cup\{1/0\}$:
a representative of the isotopy class corresponding to
$r\in\QQQ$ is the projection of a line in $\RR^2$
(the line being disjoint from $\ZZ^2$ for the loop case,
and intersecting $\ZZ^2$ for the arc case).
The element $r\in\QQQ$ associated to a loop or an arc
is called its {\it slope}.
An essential simple loop of slope $r$ in $\ptorus$ or $\OO$
is denoted by $\beta_r$,
while that in $\PConway$ is denoted by $\alpha_r$.
The notation reflects the following fact:
after an isotopy,
the restriction of the projection $\ptorus\to\OO$
to $\beta_r$ ($\subset \ptorus$)
gives a homeomorphism from $\beta_r$ ($\subset \ptorus$)
to $\beta_r$ ($\subset \OO$),
while the restriction of the projection
$\PConway\to \OO$ to $\alpha_r$
gives a two-fold covering from
$\alpha_r$ ($\subset \PConway$) to
$\beta_r$ ($\subset \OO$).

Now we recall the definition of a $2$-bridge link.
To this end,
set $\Conways=(\RR^2,\ZZ^2)/H$
and call it the {\it Conway sphere}.
Then $\Conway$ is homeomorphic to the 2-sphere,
$\PP$ consists of four points in $\Conway$, and
$\Conway-\PP$ is the $4$-punctured sphere $\PConway$.
We also call $\PConway$ the Conway sphere.
A {\it trivial tangle} is a pair $(B^3,t)$,
where $B^3$ is a 3-ball and $t$ is a union of two
arcs properly embedded in $B^3$
which is parallel to a union of two
mutually disjoint arcs in $\partial B^3$.
By a {\it rational tangle},
we mean a trivial tangle $(B^3,t)$
which is endowed with a homeomorphism from
$\partial(B^3,t)$ to $\Conways$.
Through the homeomorphism we identify
the boundary of a rational tangle with the Conway sphere.
Thus the slope of an essential simple loop in
$\partial B^3-t$ is defined.
We define
the {\it slope} of a rational tangle
to be the slope of
an essential loop on $\partial B^3 -t$ which bounds a disk in $B^3$
separating the components of $t$.
(Such a loop is unique up to isotopy
on $\partial B^3-t$ and so the slope of a rational tangle is well-defined.)

For each $r\in \QQQ$,
the {\it $2$-bridge link $K(r)$ of slope $r$}
is defined to be the sum of the rational tangles of slopes
$\infty$ and $r$, namely, $(S^3,K(r))$ is
obtained from $\rtangle{\infty}$ and $\rtangle{r}$
by identifying their boundaries through the
identity map on the Conway sphere $\Conways$. (Recall that the boundaries of
rational tangles are identified with the Conway sphere.)
$K(r)$ has one or two components according to whether
the denominator of $r$ is odd or even.
We call $\rtangle{\infty}$ and $\rtangle{r}$, respectively,
the {\it upper tangle} and {\it lower tangle} of the $2$-bridge link.

The topology of $2$-bridge links is nicely described by using
the {\it Farey tessellation} $\DD$, the tessellation of the hyperbolic
plane $\HH^2$ obtained from the ideal triangle
$\langle 0,1,\infty\rangle$ by successive reflection in its edges.
The vertex set of the Farey tessellation is equal to
$\QQQ:=\QQ\cup\{1/0\}\subset \partial \HH^2$
and so identified with the set of isotopy classes of essential simple loops
on a Fricke surface,
via the correspondence $s\mapsto \alpha_s$ or $\beta_s$.
Schubert's classification theorem of $2$-bridge links ~\cite{Schubert}
shows that two $2$-bridge links $K(r)$ and $K(r')$ are equivalent
(i.e., there is a self-homeomorphism of $S^3$ carrying one to the other)
if and only if there is an automorphism of $\DD$ which maps
the set $\{\infty, r\}$ to $\{\infty, r'\}$.

For each $r\in \QQQ$,
let $\RGP{r}$ be the group of automorphisms of
$\DD$ generated by reflections in the edges of $\DD$
with an endpoint $r$, and
let $\RGPP{r}$ be the group generated by $\RGP{r}$ and $\RGP{\infty}$.
Then the region, $R$, bounded by a pair of
Farey edges with an endpoint $\infty$
and a pair of Farey edges with an endpoint $r$
forms a fundamental domain of the action of $\RGPP{r}$ on $\HH^2$
(see Figure ~\ref{fig.fd}).
Let $I_1(r)$ and $I_2(r)$ be the closed intervals in $\RRR$
obtained as the intersection with $\RRR$ of the closure of $R$.
Suppose that $r$ is a rational number with $0<r<1$.
(We may always assume this except when we treat the
trivial knot and the trivial $2$-component link.)
Write
\begin{center}
\begin{picture}(230,70)
\put(0,48){$\displaystyle{
r=
\cfrac{1}{a_1+
\cfrac{1}{ \raisebox{-5pt}[0pt][0pt]{$a_2 \, + \, $}
\raisebox{-10pt}[0pt][0pt]{$\, \ddots \ $}
\raisebox{-12pt}[0pt][0pt]{$+ \, \cfrac{1}{a_n}$}
}} \
=:[a_1,a_2, \dots,a_n],}$}
\end{picture}
\end{center}
where $n \ge 1$, $(a_1, \dots, a_n) \in (\mathbb{Z}_+)^n$, and $a_n \ge 2$.
Then the above intervals are given by
$I_1(r)=[0,r_1]$ and $I_2(r)=[r_2,1]$,
where
\begin{align*}
r_1 &=
\begin{cases}
[a_1, a_2, \dots, a_{n-1}] & \mbox{if $n$ is odd,}\\
[a_1, a_2, \dots, a_{n-1}, a_n-1] & \mbox{if $n$ is even,}
\end{cases}\\
r_2 &=
\begin{cases}
[a_1, a_2, \dots, a_{n-1}, a_n-1] & \mbox{if $n$ is odd,}\\
[a_1, a_2, \dots, a_{n-1}] & \mbox{if $n$ is even.}
\end{cases}
\end{align*}

\begin{figure}[h]
\begin{center}
\includegraphics{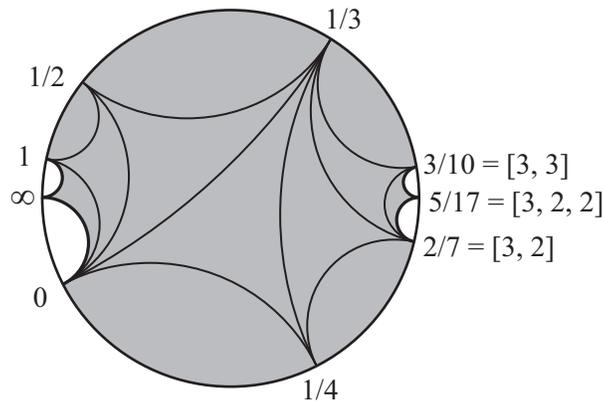}
\end{center}
\caption{\label{fig.fd}
A fundamental domain of $\hat\Gamma_r$ in the
Farey tessellation (the shaded domain) for $r=5/17=[3,2,2]$.}
\end{figure}

The following theorem was established by \cite{Ohtsuki-Riley-Sakuma}
and \cite{lee_sakuma}.

\begin{theorem}
\label{previous_results}
{\rm (1)} {\cite[Proposition ~4.6]{Ohtsuki-Riley-Sakuma}}
If two elements $s$ and $s'$ of $\QQQ$ belong to the same orbit $\RGPP{r}$-orbit,
then the unoriented loops $\alpha_s$ and $\alpha_{s'}$ are homotopic in $S^3-K(r)$.

{\rm (2)} {\cite[Lemma ~7.1]{lee_sakuma}}
For any $s\in\QQQ$,
there is a unique rational number
$s_0\in I_1(r)\cup I_2(r)\cup \{\infty, r\}$
such that $s$ is contained in the  $\RGPP{r}$-orbit of $s_0$.
In particular, $\alpha_s$ is homotopic to $\alpha_{s_0}$ in
$S^3-K(r)$.
Thus if $s_0\in\{\infty, r\}$, then $\alpha_s$ is null-homotopic
in $S^3-K(r)$.

{\rm (3)} {\cite[Main Theorem ~2.3]{lee_sakuma}}
The loop $\alpha_s$ is null-homotopic in $S^3 - K(r)$
if and only if $s$ belongs to the $\RGPP{r}$-orbit of $\infty$ or $r$.
In particular, if $s\in I_1(r)\cup I_2(r)$, then
$\alpha_s$ is not null-homotopic in $S^3-K(r)$.
\end{theorem}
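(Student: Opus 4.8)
\smallskip
\noindent\textit{Proof strategy.}
The three assertions are recollections from \cite{Ohtsuki-Riley-Sakuma} and \cite{lee_sakuma}; here is how one would establish them. The plan is to treat $(1)$ and the ``soft'' parts of $(2)$ and $(3)$ by a mixture of $3$-manifold topology and Farey combinatorics, and to reduce the ``essentiality'' part of $(3)$ --- the real content --- to a small-cancellation argument. For $(1)$, one realizes the Coxeter generators of $\RGPP{r}$ by homeomorphisms of the two tangles. A reflection $\rho$ in a Farey edge with endpoint $\infty$ is induced on the Conway sphere $\PConway=\partial B^3-t$ by a self-homeomorphism $\Phi$ of the upper tangle $\rtangle{\infty}$; since $\Phi$ preserves the two strands of $t(\infty)$, it induces on $\pi_1(B^3-t(\infty))\cong F_2$ either an inner automorphism or the automorphism inverting both meridian generators, and in either case it preserves the unoriented free homotopy class of every essential simple loop on $\PConway$ (for the latter automorphism this uses the fact that the word representing such a loop is a palindrome). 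Hence $\alpha_s$ and $\alpha_{\rho(s)}=\Phi(\alpha_s)$ are freely homotopic in $B^3-t(\infty)\subset S^3-K(r)$; the same argument with the lower tangle $\rtangle{r}$ handles the generators of $\RGP{r}$, and since $\RGPP{r}=\langle\RGP{\infty},\RGP{r}\rangle$, composing such homotopies proves $(1)$.

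For $(2)$, one uses that the region $R$ of Figure~\ref{fig.fd} is a Coxeter chamber for the action of $\RGPP{r}$ on $\HH^2$: its four sides are the fixed geodesics of the reflections in the two Farey edges at $\infty$ and the two Farey edges at $r$ bounding $R$, these four reflections generate $\RGPP{r}$, and $R$ is a fundamental domain. General reflection-group theory then shows that every $\RGPP{r}$-orbit on $\QQQ$ meets $\overline{R}\cap\QQQ$ in exactly one point; since $\overline{R}\cap\RRR=I_1(r)\cup I_2(r)\cup\{\infty,r\}$ by construction of the $I_i(r)$, this gives both the existence and the uniqueness of $s_0$, and $\alpha_s\simeq\alpha_{s_0}$ then follows from $(1)$. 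The clause concerning $s_0\in\{\infty,r\}$ and the ``only if'' direction of $(3)$ are now immediate: $\alpha_\infty$ bounds the disk in $B^3$ separating the two strands of $t(\infty)$, a disk disjoint from $t(\infty)$ and hence lying in $B^3-t(\infty)$, so $\alpha_\infty$ is null-homotopic in $S^3-K(r)$; likewise $\alpha_r$ bounds the corresponding disk of the lower tangle; so by $(1)$, $\alpha_s$ is null-homotopic whenever $s$ lies in the $\RGPP{r}$-orbit of $\infty$ or of $r$.

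The remaining and genuinely hard point is the converse of $(3)$: that $\alpha_s$ is \emph{not} null-homotopic in $S^3-K(r)$ when $s\in I_1(r)\cup I_2(r)$. The plan is to take the two-generator one-relator presentation $\pi_1(S^3-K(r))=\langle a,b\mid a\,w_r=w_r\,b\rangle$ associated with the meridians $a,b$ of the upper tangle, where the word $w_r=w_r(a,b)$ is read off from the continued-fraction expansion $r=[a_1,\dots,a_n]$, to express the unoriented free homotopy class of $\alpha_s$ as an explicit cyclic word $u_s=u_s(a,b)$ determined by the line of slope $s$ together with the Farey sequence between $0$ and $1$, and then to prove $u_s\neq 1$ in the group. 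For the last step one would run Dehn's algorithm: one shows that no cyclic subword of $u_s^{\pm1}$ coincides with more than half of the cyclic relator $a\,w_r\,w_r^{-1}b^{-1}$, so that $u_s$ admits no Dehn reduction and is therefore non-trivial --- equivalently, one verifies an appropriate small-cancellation condition for this presentation relative to the family $\{u_s\}$.

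I expect the small-cancellation estimate in the last paragraph to be the main obstacle. Controlling the overlaps between $u_s$ and the relator requires a detailed understanding of how the continued-fraction data of $s\in I_1(r)\cup I_2(r)$ and of $r$ govern the way the associated ``upper'' and ``lower'' words interlock, and making the bound uniform over all admissible pairs $(r,s)$ is precisely the technical core of \cite{lee_sakuma}. By contrast, $(1)$ and the chamber argument for $(2)$ are routine once the dictionary among rational tangles, the Farey tessellation, and continued fractions has been set up.
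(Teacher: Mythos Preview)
The paper does not prove this theorem; it simply imports the three statements from \cite{Ohtsuki-Riley-Sakuma} and \cite{lee_sakuma}. Your outline is broadly in line with how those references proceed: for (1) one realizes the generators of $\RGP{\infty}$ and $\RGP{r}$ by symmetries of the respective rational tangles (the index-$2$ translation subgroup is the Dehn twist along the disk-bounding curve $\alpha_\infty$ or $\alpha_r$, hence isotopically trivial in the tangle complement, and a single reflection accounts for the rest); for (2) the Coxeter-chamber argument with fundamental domain $R$ is exactly right; and for (3) the null-homotopy of $\alpha_\infty,\alpha_r$ via the separating disks, together with (1), gives the easy direction, while the converse is the substantial result of \cite{lee_sakuma}.

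Two points deserve tightening. First, a slip: the relator you wrote, $a\,w_r\,w_r^{-1}b^{-1}$, collapses to $ab^{-1}$; the Wirtinger-type relator coming from $a\,w_r=w_r\,b$ is $a\,w_r\,b^{-1}w_r^{-1}$. Second, and more substantively, the programme ``run Dehn's algorithm on the two-generator one-relator presentation'' does not go through as stated: the single relator of a $2$-bridge link group does not in general satisfy a metric small-cancellation condition strong enough for Dehn's algorithm, so the criterion ``no cyclic subword of $u_s^{\pm1}$ equals more than half of the relator'' is neither available nor sufficient. What \cite{lee_sakuma} actually does is work with the presentation of $\pi_1(S^3-K(r))$ as $\pi_1(\PConway)/\llangle\alpha_\infty,\alpha_r\rrangle$, develop explicit cyclic sequences $S(r)$ and $S(s)$ recording the exponent patterns of $\alpha_r$ and $\alpha_s$, and carry out a delicate annular/disk van Kampen diagram analysis (case-by-case in the continued-fraction data) to exclude a reduced diagram with boundary $u_s$; your last paragraph correctly anticipates that this combinatorial control is the crux, but the mechanism is not Dehn reduction over the standard presentation. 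The palindrome property you invoke for (1) is true and is the right way to dispose of the non-inner reflection, but it is a fact that itself requires proof (it follows from the $(x,y)\mapsto(x,-y)$ symmetry of $\Conways$).
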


Moreover, it is proved by \cite{lee_sakuma_2, lee_sakuma_3, lee_sakuma_4}
that generically two simple loops $\alpha_s$ and $\alpha_{s'}$
with $s,s'\in I_1(r)\cup I_2(r)$ are homotopic in the link complement
only when $s=s'$ (see Theorem ~\ref{prop:conjugacy}).

Now assume $r=q/p$, where $p$ and $q$ are relatively prime
positive integers such that $q\not\equiv \pm1 \pmod{p}$.
This is equivalent to the condition that $K(r)$ is hyperbolic,
namely the link complement $S^3-K(r)$ admits
a complete hyperbolic structure of finite volume.
Let $\rho_r$ be the $PSL(2,\CC)$-representation of $\pi_1(\PConway)$
obtained as the composition
\[
\pi_1(\PConway) \to
\pi_1(\PConway)/ \llangle\alpha_{\infty},\alpha_r\rrangle
\cong
\pi_1(S^3-K(r))
\to
\Isom^+(\HH^3)
\cong
PSL(2,\CC),
\]
where the last homomorphism is the holonomy representation
of the complete hyperbolic structure of $S^3-K(r)$.
Since $\pi(S^3-K(r))$ is generated by two meridians,
$\rho_r(\pi_1(\PConway))$ is generated by two parabolic transformations.
Hence the hyperbolic manifold $S^3-K(r)$ admits an isometric $\ZZ/2\ZZ\oplus \ZZ/2\ZZ$-action
(see \cite[Section ~5.4]{Thurston} and Figure ~\ref{fig.knot_symmetry}), and so
the $PSL(2,\CC)$-representation $\rho_r$ of $\pi_1(\PConway)$
extends to that of $\pi_1(\OO)$.
Moreover, this extension is unique
(see \cite[Proposition ~2.2]{ASWY}).
So we obtain, in a unique way, a
$PSL(2,\CC)$-representation of $\pi_1(\ptorus)$ by restriction.
We continue to denote it by $\rho_r$.

\begin{figure}[h]
\begin{center}
\includegraphics{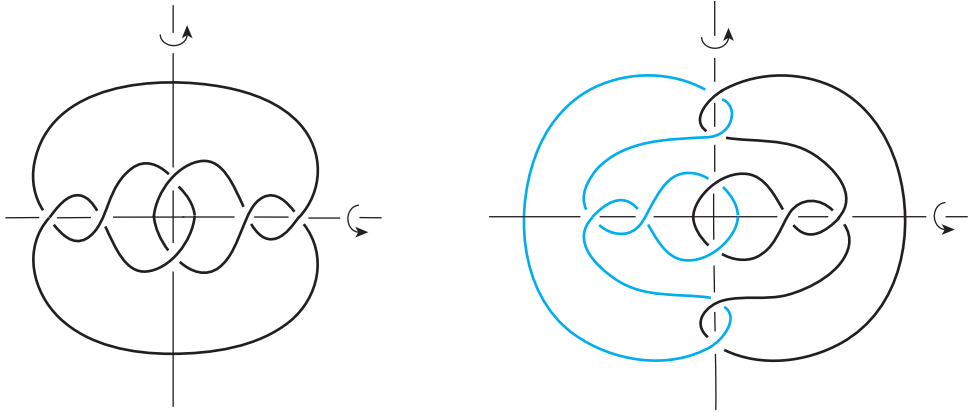}
\end{center}
\caption{\label{fig.knot_symmetry}
The symmetries of $S^3-K(r)$.}
\end{figure}

Note that each cusp of the hyperbolic manifold $S^3-K(r)$
carries a Euclidean structure, well-defined up to similarity,
and hence it is identified with the quotient of $\CC$
(with the natural Euclidean metric)
by the lattice $\ZZ\oplus \ZZ\lambda$,
generated by the translations
$[\zeta\mapsto \zeta+1]$ and $[\zeta\mapsto \zeta+\lambda]$
corresponding to the meridian and
a (suitably chosen) longitude, respectively.
This $\lambda$ does not depend on the choice of the cusp,
because when $K(r)$ is a two-component 2-bridge link
there is an orientation-preserving isometry of $S^3-K(r)$ interchanging the two cusps
(see Figure ~\ref{fig.knot_symmetry}).
We call $\lambda$ the {\it modulus} of the cusp and denote it by $\lambda(K(r))$.
In this paper, we prove the following variation of McShane's identity
which describes the modulus $\lambda(K(r))$
in terms of the complex translation lengths of
the images by $\rho_r$ of essential simple loops on $\ptorus$.

\begin{theorem}
\label{MainTheorem1}
For a hyperbolic $2$-bridge link $K(r)$,
the following identity holds:
\[
2\sum_{s\in \mathrm{int}I_1(r)}
\frac{1}{1+e^{l_{\rho_r}(\beta_s)}}
+
2\sum_{s\in \mathrm{int}I_2(r)}\frac{1}{1+e^{l_{\rho_r}(\beta_s)}}
+
\sum_{s\in\partial I_1(r)\cup\partial I_2(r)}\frac{1}{1+e^{l_{\rho_r}(\beta_s)}}
=-1,
\]
where $l_{\rho_r}(\beta_s)$ denotes the complex translation length of the hyperbolic isometry
$\rho_r(\beta_s)$.
Furthermore, the modulus $\lambda(K(r))$ of the cusp torus of the
cusped hyperbolic manifold $S^3-K(r)$
with respect to a suitable choice of a longitude
is given by the following formula:
\begin{align*}
\lambda(K(r))
&=
\frac{4}{|K(r)|}
\left\{
2\sum_{s\in \mathrm{int}I_1(r)}\frac{1}{1+e^{l_{\rho_r}(\beta_s)}}
+
\sum_{s\in\partial I_1(r)}\frac{1}{1+e^{l_{\rho_r}(\beta_s)}}
\right\}\\
&=
\frac{-4}{|K(r)|}
\left\{
2\sum_{s\in \mathrm{int}I_2(r)}\frac{1}{1+e^{l_{\rho_r}(\beta_s)}}
+
\sum_{s\in\partial I_2(r)}\frac{1}{1+e^{l_{\rho_r}(\beta_s)}}
+1
\right\},
\end{align*}
where $|K(r)|$ denotes the number of components of $K(r)$.
\end{theorem}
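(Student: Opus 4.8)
The plan is to realize the identity as the statement that a natural decomposition of the cusp torus into Euclidean pieces is complete, and to read off the moduli of those pieces from the hyperbolic geometry of the $2$-bridge link complement. Concretely, I would first invoke the canonical decomposition of $S^3-K(r)$ coming from the continued fraction expansion $r=[a_1,\dots,a_n]$, as developed in \cite{Gueritaud, Sakuma-Weeks}: the complement decomposes into ideal tetrahedra, and the induced cell decomposition of each cusp torus is a union of Euclidean triangles whose combinatorics is governed by the Farey tessellation and the slopes lying in $I_1(r)\cup I_2(r)$. The key point is that the edges of this cusp triangulation correspond precisely to the loops $\beta_s$ with $s\in \mathrm{int}\, I_1(r)\cup\mathrm{int}\, I_2(r)$ (interior edges, counted with multiplicity two because of the $\ZZ/2\oplus\ZZ/2$ symmetry) together with the boundary slopes $\partial I_1(r)\cup\partial I_2(r)$ (the two ``ends'' of each interval, which are the slopes $\infty$ and $r$ and the slopes $r_1,r_2$).

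Second, I would express the contribution of each triangle of the cusp decomposition to the developed picture in $\CC$ in terms of the complex translation length $l_{\rho_r}(\beta_s)$ of the corresponding loop. The mechanism here is the standard one from Bowditch's treatment \cite{Bowditch2, Bowditch3}: to each essential simple loop $\beta_s$ on $\ptorus$ one associates, via the trace of $\rho_r(\beta_s)$, a ``gap'' or ``horocyclic arc length'' of the form proportional to $1/(1+e^{l_{\rho_r}(\beta_s)})$, obtained by intersecting the loop (or rather the associated embedded annulus/ideal triangle) with a fixed horosphere about the cusp. I would set up this correspondence carefully at the level of the once-punctured torus orbifold cover, using the Fricke diagram from Section \ref{sec:orbifold} to transfer between $\beta_s\subset\ptorus$ and $\alpha_s\subset\PConway$, and I would track the factor of $2$ (respectively $4/|K(r)|$): the $2$ comes from the two lifts of each interior edge under $\ptorus\to\OO$, and the $4/|K(r)|$ normalizes for the order of the symmetry group acting on the cusp(s) and the number of components.

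Third, with these pieces in place, the first displayed identity becomes the assertion that the total horocyclic length of the developed cusp torus, measured along the meridian, closes up to exactly (a normalization of) the meridian translation $1$; the constant $-1$ on the right is the bookkeeping that results from orienting the boundary slopes consistently with the rest of the sum. The two formulas for $\lambda(K(r))$ then follow by the same computation carried out along the longitude instead of the meridian: summing the triangle contributions coming from the $I_1(r)$ side gives $\lambda$ up to the factor $4/|K(r)|$, while summing from the $I_2(r)$ side gives the same quantity with the opposite sign shift, and equating the two recovers the first identity as a consistency check. The explicit identification of which longitude is being used is precisely the content deferred to Section \ref{sec:longitude}.

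The hard part will be the bookkeeping of signs, multiplicities, and the boundary terms. Establishing that $1/(1+e^{l_{\rho_r}(\beta_s)})$ is exactly the right local quantity requires being careful about which square root of the eigenvalue (equivalently, which lift to $SL(2,\CC)$) is used, since $e^{l/2}$ rather than $e^l$ is what appears in the trace, and one must verify that the global sum is insensitive to these choices; this is where Bowditch's $\mu$-map / end-invariant formalism and the results on non-homotopy of distinct $\alpha_s$ (Theorems \ref{previous_results} and \ref{prop:conjugacy}) are needed, to guarantee that the loops indexed by $\mathrm{int}\, I_1(r)\cup\mathrm{int}\, I_2(r)$ are genuinely distinct and that no collapsing or cancellation occurs in the decomposition. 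A secondary subtlety is the treatment of the two ``degenerate'' ends of the intervals: the slopes $\infty$ and $r$ give null-homotopic loops (so $\rho_r(\beta_{\infty})$, $\rho_r(\beta_r)$ are the meridional parabolics and $l=0$, contributing $1/2$ each), and one must check that the combinatorial triangulation degenerates at exactly these slopes in the way needed to make the endpoint terms come out with the stated coefficient $1$.
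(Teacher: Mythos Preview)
Your overall architecture---cusp triangulation plus Bowditch's machinery---matches the paper's, but there is a structural conflation in your plan that would stop the argument from going through, and a concrete error about the boundary terms.

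\textbf{The finite/infinite mismatch.} You say the edges of the cusp triangulation ``correspond precisely to the loops $\beta_s$ with $s\in\mathrm{int}\,I_1(r)\cup\mathrm{int}\,I_2(r)$'', and that each triangle contributes a single term $1/(1+e^{l_{\rho_r}(\beta_s)})$. But the cusp triangulation has only \emph{finitely} many triangles (it is induced by the finite chain $\Sigma_0(r)$), whereas the sums in the theorem run over \emph{all} rational $s$ in $I_j(r)$, which is infinite. The paper separates these cleanly: first (Proposition~\ref{prop.cusp-shape}) one obtains a finite identity $\frac{1}{2}\lambda(\OO(r))=\sum_{\ee\in\EE_j(r)}\psi_r(\ee)$ directly from the cusp geometry, where the $\psi_r(\ee)$ are the complex probabilities attached to the finitely many oriented edges of the dual tree pointing into $\Sigma_0(r)$; then (Proposition~\ref{key-proposition}) each individual $\psi_r(\ee)$ is expanded as an absolutely convergent \emph{infinite} sum $\sum_{s\in\Omega^0(e)}h(\phi_r(s))+2\sum_{s\in\Omega^-(\ee)}h(\phi_r(s))$. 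The factor of $2$ on interior slopes comes from Bowditch's formula for $\psi$, not from a covering multiplicity as you suggest. The genuine work is verifying the hypotheses of Proposition~\ref{key-proposition} for every $\ee\in\EE_1(r)\cup\EE_2(r)$ (Key Lemma~\ref{key-lemma}); this is where Theorem~\ref{prop:conjugacy} and the discreteness of the length spectrum (Lemma~\ref{length-specturum}) are used, and without this step there is no reason the infinite sums even converge.

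\textbf{The boundary terms.} Your identification of $\partial I_1(r)\cup\partial I_2(r)$ is wrong: by definition $I_1(r)=[0,r_1]$ and $I_2(r)=[r_2,1]$, so the four boundary slopes are $0,r_1,r_2,1$, \emph{not} $\infty$ and $r$. The slopes $\infty$ and $r$ lie outside $I_1(r)\cup I_2(r)$ altogether and never appear in the sums. Moreover $\rho_r(\beta_\infty)$ and $\rho_r(\beta_r)$ are not parabolic; they are elliptic of order $2$ (equivalently $\phi_r(\infty)=\phi_r(r)=0$, see Lemma~\ref{2-bridge-markoff}), so $h$ is not even defined there. In the paper's argument these two slopes enter only through the two special edges $\ee_\pm$ with $\psi_r(\ee_\pm)=1$, which is exactly what produces the $-1$: one has $\sum_{\ee\in\EE(r)}\psi_r(\ee)=1$ (the meridian normalization), and subtracting the contribution $1+1$ of $\ee_\pm$ gives $\sum_{\EE_1(r)}+\sum_{\EE_2(r)}=-1$.
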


In the above theorem and in the remainder of this paper,
we abuse notation to use the symbol $\beta_s$ to denote
an element of $\pi_1(\ptorus)$ represented by
the simple loop $\beta_s$ with an arbitral orientation.
Though such an element of $\pi_1(\ptorus)$ is determined
only up to conjugacy and inverse,
the complex translation length of $\rho_r(\beta_s)$
is uniquely determined by the slope $s$.
Here the {\it complex translation length} of an orientation-preserving
isometry of $\HH^3$ is an element of $\CC/2\pi i\ZZ$
whose real part is the translation length along the axis of the isometry
and whose imaginal part is the rotation angle around the axis.
If $\rho_r(\beta_s)$ is represented by a matrix $A\in SL(2,\CC)$,
then the complex translation length $l_{\rho_r}(\beta_s)$
is determined by the formula
$
\tr A=\cosh\left(\frac{l_{\rho_r}(\beta_s)}{2}\right)
$
and the condition that the real translation length
$L(\rho_r(\alpha_s)):=\Re(l_{\rho_r}(\beta_s))$
is non-negative.

At the end of this section,
we restate the above theorem in terms of
the hyperbolic $3$-orbifold $\OO(r):=S^3-K(r)/(\ZZ/2\ZZ\oplus \ZZ/2\ZZ)$,
the quotient of the hyperbolic manifold $S^3-K(r)$
by the isometric $\ZZ/2\ZZ\oplus \ZZ/2\ZZ$-action.
To this end, note that the action extends to an action on $(S^3,K(r))$
satisfying the following conditions (see Figure ~\ref{fig.knot_symmetry}).
\begin{enumerate}[\indent \rm (1)]
\item If we regard $S^3$ as the one-point compactification of $\RR^3$,
then the action on $S^3$ consists of the $\pi$-rotations around
the three coordinate axes.
In particular, the singular set, $F$, is the union of the three axes and the point at infinity.

\item If $K(r)$ is a knot, then
both of the $\ZZ/2\ZZ$-factors act on $K(r)$ as reflections,
and so $K(r)$ intersects $F$ in $4$-points.
Each of the four sub-intervals of $K(r)$ divided by
$K(r)\cap F$
forms a fundamental domain of the restriction of the action to $K(r)$.

\item If $K(r)$ has two components,
then one of the $\ZZ/2\ZZ$-factors interchanges the components of $K(r)$
and the other factor preserves both components of $K(r)$
and acts on each component as a reflection.
In particular, each component of $K(r)$ intersects $F$
in two points, and each of the four sub-intervals of $K(r)$ divided by
$K(r)\cap F$ forms a fundamental domain of the restriction of the action to $K(r)$.
\end{enumerate}
Hence the orbifold $\OO(r)$ has a single cusp,
which forms a Euclidian orbifold of type $(2,2,2,2)$,
i.e, the orbifold with underlying space $S^2$ and with four cone points of cone angle $\pi$.
Recall that the cusp torus of $S^3-K(r)$ is
identified with the quotient of $\CC$
by the lattice $\ZZ\oplus \ZZ\lambda$,
generated by the translations
$[\zeta\mapsto \zeta+1]$ and
$[\zeta\mapsto \zeta+\lambda]$
corresponding to the meridian and
a (suitably chosen) longitude, respectively.
By the above description of the $\ZZ/2\ZZ\oplus \ZZ/2\ZZ$-action,
we see that the cusp of $\OO(r)$ is the quotient of $\CC$
by the group generated by $\pi$-rotations around the origin $0$,
the point $\frac{1}{2}$, and the point $\frac{|K(r)|}{4}\lambda$.
It should be noted that
the line segment in $\CC$ joining $0$ and $\frac{|K(r)|}{4}\lambda$
projects homeomorphically onto a simple arc joining two cone points in
$\partial\OO(r)$, whose inverse image in $S^3-K(r)$ forms
a longitude if $K(r)$ is a knot;
it forms a union of longitudes of the two components
if $K(r)$ is a $2$-component link.
We call the simple arc in $\partial\OO(r)$ a {\it longitude}.
We set
\[
\lambda(\OO(r)):=\frac{\frac{|K(r)|}{4}\lambda}{\frac{1}{2}}=
\frac{|K(r)|}{2}\lambda,
\]
and call it
the {\it modulus} of the cusp of $\OO(r)$
with respect to the longitude.
Then Theorem ~\ref{MainTheorem1} is paraphrased as follows.

\begin{theorem}
\label{MainTheorem2}
For a hyperbolic $2$-bridge link $K(r)$,
the modulus of the cusp of the quotient orbifold $\OO(r)$
with respect to a suitable choice of a longitude
is given by the following formula:
\begin{align*}
\frac{1}{2}\lambda(\OO(r))
&=
2\sum_{s\in \mathrm{int}I_1(r)}\frac{1}{1+e^{l_{\rho_r}(\beta_s)}}
+
\sum_{s\in\partial I_1(r)}\frac{1}{1+e^{l_{\rho_r}(\beta_s)}}\\
&=
-2\sum_{s\in \mathrm{int}I_2(r)}\frac{1}{1+e^{l_{\rho_r}(\beta_s)}}
-
\sum_{s\in\partial I_2(r)}\frac{1}{1+e^{l_{\rho_r}(\beta_s)}}
-1.
\end{align*}
\end{theorem}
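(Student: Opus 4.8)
The plan is to obtain the two displayed identities as an immediate consequence of Theorem~\ref{MainTheorem1}, which we may assume. Once Theorem~\ref{MainTheorem1} is granted, the argument is a short computation; the only additional ingredient is the translation between the cusp of the orbifold $\OO(r)$ and the cusp torus of $S^3-K(r)$ set up in the paragraphs preceding the statement. Recall from that discussion that, writing $\lambda=\lambda(K(r))$, the cusp of $\OO(r)$ is the quotient of $\CC$ by the group generated by the $\pi$-rotations about $0$, $\tfrac12$ and $\tfrac{|K(r)|}{4}\lambda$, and that $\lambda(\OO(r))$ is defined by
\[
\lambda(\OO(r))=\frac{|K(r)|}{2}\,\lambda(K(r)),
\qquad\text{so that}\qquad
\frac12\,\lambda(\OO(r))=\frac{|K(r)|}{4}\,\lambda(K(r)).
\]

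The substance of the proof is then to multiply the two expressions supplied for $\lambda(K(r))$ in Theorem~\ref{MainTheorem1} by $|K(r)|/4$. Since $\tfrac{|K(r)|}{4}\cdot\tfrac{4}{|K(r)|}=1$, the first one gives
\[
\frac12\,\lambda(\OO(r))
=2\sum_{s\in\mathrm{int}I_1(r)}\frac{1}{1+e^{l_{\rho_r}(\beta_s)}}
+\sum_{s\in\partial I_1(r)}\frac{1}{1+e^{l_{\rho_r}(\beta_s)}},
\]
and since $\tfrac{|K(r)|}{4}\cdot\tfrac{-4}{|K(r)|}=-1$, the second gives
\[
\frac12\,\lambda(\OO(r))
=-2\sum_{s\in\mathrm{int}I_2(r)}\frac{1}{1+e^{l_{\rho_r}(\beta_s)}}
-\sum_{s\in\partial I_2(r)}\frac{1}{1+e^{l_{\rho_r}(\beta_s)}}-1.
\]
These are exactly the claimed formulas, and their mutual consistency is precisely the first identity of Theorem~\ref{MainTheorem1}, rewritten after observing that $\partial I_1(r)$ and $\partial I_2(r)$ are disjoint, so the sum over $\partial I_1(r)\cup\partial I_2(r)$ splits. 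The one point needing a word rather than a calculation is that ``a suitable choice of a longitude'' means the same thing in both theorems; this is immediate, since the longitude of $\OO(r)$ appearing in $\lambda(\OO(r))$ was defined to be the image of the longitude of $S^3-K(r)$ appearing in $\lambda(K(r))$.

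So there is no genuine obstacle at the level of Theorem~\ref{MainTheorem2}; the real work is entirely in Theorem~\ref{MainTheorem1}, and I indicate where I expect it to lie. The natural approach, in the spirit of Bowditch's variation for once-punctured torus bundles~\cite{Bowditch3}, is to use the canonical triangulation of the cusp of $S^3-K(r)$ (Section~\ref{sec:canonical}, following~\cite{Gueritaud, Sakuma-Weeks}) to decompose the cusp torus into Euclidean triangles indexed by the edges of the path in the Farey tessellation $\DD$ joining $\infty$ and $r$, and then to read off the holonomy along the chosen longitude as a sum of the contributions of these triangles. The crux is to pin down each contribution: for a slope $s$ in the fundamental domain $I_1(r)\cup I_2(r)\cup\{\infty,r\}$ of $\RGPP{r}$ interior to $I_1(r)$ or $I_2(r)$ it should equal $2/(1+e^{l_{\rho_r}(\beta_s)})$, for the four boundary slopes it should be half of that, and the two remaining slopes $\infty$ and $r$ must be shown to account exactly for the additive constant $-1$. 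The last point is forced by the degeneracy of $\rho_r$ there: since $\alpha_{\infty}$ and $\alpha_{r}$ are null-homotopic in $S^3-K(r)$ (Theorem~\ref{previous_results}) and $\alpha_s$ double-covers $\beta_s$, the elements $\rho_r(\beta_{\infty})$ and $\rho_r(\beta_{r})$ are elliptic of order $2$, so the naive summands $1/(1+e^{l_{\rho_r}(\beta_{\infty})})$ and $1/(1+e^{l_{\rho_r}(\beta_{r})})$ are infinite and must be removed and suitably regularized. Carrying all this out requires the exact geometric interpretation of $1/(1+e^{l})$ as a horocyclic width in the cusp triangulation, and it is intertwined with identifying which slopes actually occur, i.e. with the end-invariant computation of Section~\ref{sec:end_invarinat} (Theorems~\ref{MainTheorem3} and~\ref{MainTheorem4}); the absolute convergence of the two series is a separate, easier point, following from the exponential growth of $\Re\,l_{\rho_r}(\beta_s)$ along the Farey path.
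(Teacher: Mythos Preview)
Your reduction of Theorem~\ref{MainTheorem2} to Theorem~\ref{MainTheorem1} via the relation $\lambda(\OO(r))=\tfrac{|K(r)|}{2}\lambda(K(r))$ is correct, and the paper indeed presents Theorem~\ref{MainTheorem2} as a paraphrase of Theorem~\ref{MainTheorem1}. In the paper the two are proved simultaneously in Section~\ref{sec:proof}: Proposition~\ref{prop.cusp-shape}(2) expresses $\tfrac12\lambda(\OO(r))$ directly as $\sum_{\ee\in\EE_1(r)}\psi_r(\ee)$, so if anything the orbifold formulation is the one obtained first and Theorem~\ref{MainTheorem1} is then read off from it; but the two are logically equivalent and your direction is fine.

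Where your speculative outline for Theorem~\ref{MainTheorem1} drifts from the actual proof is in two places. First, the mechanism is not a one-to-one correspondence between Euclidean triangles in the cusp and slope terms $1/(1+e^{l_{\rho_r}(\beta_s)})$. Rather, the longitude holonomy is written as a finite sum $\sum_{\ee\in\EE_j(r)}\psi_r(\ee)$ of complex probabilities (Proposition~\ref{prop.cusp-shape}), and then each individual $\psi_r(\ee)$ is expanded as an \emph{infinite} sum over $\Omega^{0-}(\ee)$ via Bowditch's Proposition~\ref{key-proposition}; collecting terms over all $\ee$ yields the interior/boundary weighting. The constant $-1$ arises not from regularizing divergent summands at $\infty$ and $r$ but from the exact values $\psi_r(\ee_-)=\psi_r(\ee_+)=1$ (since $\phi_r(\infty)=\phi_r(r)=0$) in the identity $\sum_{\ee\in\EE(r)}\psi_r(\ee)=1$.

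Second, and more importantly, you have the relative difficulty inverted. Verifying the hypotheses of Proposition~\ref{key-proposition} (Key Lemma~\ref{key-lemma}), which is what guarantees absolute convergence, is the heart of the proof, not a side remark. It does not follow from any generic exponential growth along Farey paths; it requires Theorem~\ref{prop:conjugacy}, the classification of null-homotopic, peripheral, and mutually homotopic simple loops on the bridge sphere established in \cite{lee_sakuma, lee_sakuma_2, lee_sakuma_3, lee_sakuma_4}, together with the discreteness of the marked length spectrum (Lemma~\ref{length-specturum}). This is exactly the step you describe as ``intertwined with identifying which slopes actually occur,'' but it is the main content rather than an auxiliary observation.
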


An explicit description of
the longitude in Theorems ~\ref{MainTheorem1} and \ref{MainTheorem2}
is given by Proposition ~\ref{prop:longitude}.

\section{The orbifold fundamental group $\pi_1(\OO)$}
\label{sec:orbifold}

The orbifold fundamental group
of $\OO$ has the presentation:
\begin{equation}
\label{orbifold-group-presentation}
\pi_1(\OO) = \langle A, B, C\; |\;
A^2=B^2=C^2=1\rangle,
\end{equation}
where $D:=(ABC)^{-1}$ is represented by
the puncture of $\OO$.
We call $D$ the {\it distinguished element}.
Since $\ptorus$ and $\PConway$ are finite regular coverings of
the orbifold $\OO$,
the fundamental groups of $\ptorus$ and $\PConway$ are regarded
as normal subgroups of $\pi_1(\OO)$ of indices $2$ and $4$, respectively.

By picking a complete hyperbolic structure of $\OO$ (and hence
of $\ptorus$), we identify the universal covering space
$\tilde\OO=\tilde \ptorus$ with (the upper-half-space
model of) the hyperbolic plane \mbox{$\HH^2=\{z\in\CC\mid\Im(z)>0\}$,} and
identify $\pi_1(\OO)$ with a Fuchsian group
(see Figure ~\ref{fig.fuchsian-group}).
We assume that $D$ is identified with the following parabolic transformation
having the ideal point $\infty$ of $\HH^2$
as the parabolic fixed point:
\begin{equation*}
\label{NormalizingD0}
D(z)=z+1.
\end{equation*}
Then the points $A(\infty)$, $B(\infty)$ and $C(\infty)$ lie on $\RR$
from left to right in this order.
After a coordinate change, we may assume that
the images of the three geodesics
joining $\infty$ with these three points, in the
universal abelian cover $\RR^2-\ZZ^2$ of $\ptorus$,
are open arcs of slopes $0$, $1$ and $\infty$,
joining the puncture $(0,0)$ with $(1,0)$, $(1,1)$ and $(0,1)$, respectively.
Thus the images of these three geodesics in $\ptorus$
are mutually disjoint arcs properly embedded in $\ptorus$,
which divide $\ptorus$ into two ideal triangles, and thus
they determine an ideal triangulation of $\ptorus$.

\begin{figure}[h]
\begin{center}
\includegraphics{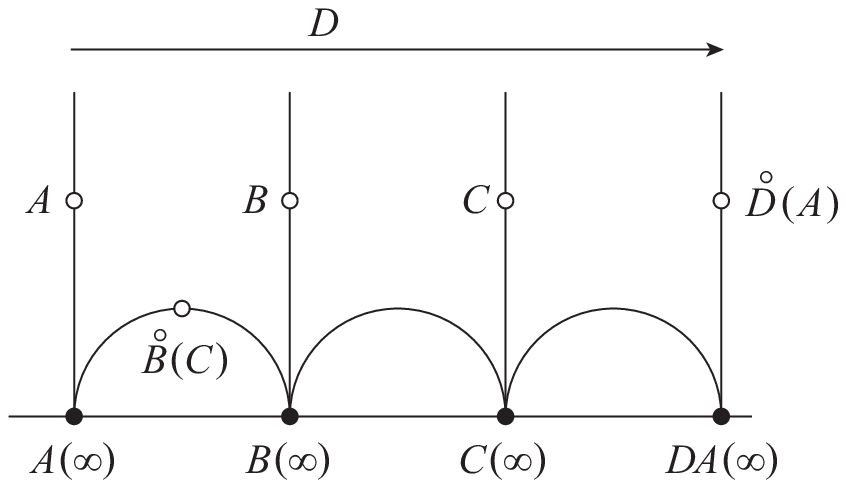}
\end{center}
\caption{\label{fig.fuchsian-group}
The Fuchsian group $\langle A,B,C\rangle$. The symbols $A$, $B$,
$C$, $\hskip1.5mm{}^{{}^{{}^\circ}}\hskip-3.2mm D(A)$ and $\hskip1.5mm{}^{{}^{{}^\circ}}\hskip-3.2mmB(C)$
are situated near the fixed points in $\HH^2$ of the involutions they denote.}
\end{figure}

We now recall the well-known correspondence between the ideal
triangulations of $\ptorus$ and the {\it Farey triangles},
i.e., a triangle in the Farey tessellation $\DD$.
The vertex set of $\DD$ is equal to $\QQQ:=\QQ\cup\{1/0\}
\subset \partial \HH^2$ and each vertex $s$ determines a properly
embedded arc $\delta_s$ in $\ptorus$ of {\it slope} $s$, i.e., the arc
in $\ptorus$ obtained as the image of the straight arc of slope $r$ in
$\RR^2-\ZZ^2$ joining punctures. If $\sigma=\langle
s_0,s_1,s_2\rangle$ is a Farey triangle,
then the arcs $\delta_{s_0}$, $\delta_{s_1}$ and
$\delta_{s_2}$ are mutually disjoint and they determine an ideal
triangulation of $\ptorus$. In the following we assume
that the orientation of $\sigma=\langle s_0,s_1,s_2\rangle$ is
coherent with the orientation of the Farey triangle $\langle
0,1,\infty\rangle$, where the orientation is determined by the order
of the vertices. Then the oriented simple loop in $\ptorus$ around the
puncture representing $D^2$ meets the edges of the ideal triangulation $\trg(\sigma)$
of slopes $s_0,s_1,s_2$ in this cyclic order, for every Farey triangle
$\sigma=\langle s_0,s_1,s_2\rangle$.

By using the above notation,
the generators $A$, $B$ and $C$
in (\ref{orbifold-group-presentation}) are described as follows.
Consider the ideal triangulation $\trg(\sigma)$ of $\ptorus$
determined by the Farey triangle $\sigma=\langle 0,1,\infty\rangle$.
It lifts to a $\pi_1(\OO)$-invariant tessellation of
the universal cover $\tilde \ptorus=\HH^2$.
Let $\{e_j\}_{j\in\ZZ}$ be the edges of the tessellation emanating from
the ideal vertex $\infty$, lying in $\HH^2$ from left to right in this order.
For each $e_j$,
there is a unique order $2$ element, $P_j$, in $\pi_1(\OO)$
which inverts $e_j$.
We may assume after a shift of indices that
$e_{3j}$, $e_{3j+1}$ and $e_{3j+2}$
project to the arcs in $\ptorus$ of slopes $0$, $1$ and $\infty$,
respectively, for every $j\in\ZZ$.
Then any triple of consecutive elements
$(P_{3j}, P_{3j+1}, P_{3j+2})$ serves as $(A,B,C)$.
Throughout this paper, $(A,B,C)$ represents the triple of
specific elements of $\pi_1(\OO)$ obtained in this way.
We call $\{P_j\}_{j\in\ZZ}$
the {\it sequence of elliptic generators}
associated with the Farey triangle $\sigma$.

The above construction works for every Farey triangle
$\sigma=\langle s_0,s_1,s_2\rangle$,
and the sequence of elliptic generators associated with it is defined.
(Here we use the assumption that the orientation of
$\langle s_0,s_1,s_2\rangle$ is coherent with the orientation of
$\langle 0,1,\infty\rangle$.)
Any triple of three consecutive elements
in a sequence of elliptic generators is called an
{\it elliptic generator triple}.
A member, $P$, of an elliptic generator triple is called
an {\it elliptic generator},
and its {\it slope} $s(P)\in\QQQ$ is defined
to be the slope of the arc in $\ptorus$ obtained as the image
of the geodesic $\langle \infty, P(\infty)\rangle$.
(Here it should be noted that $\infty$ is the parabolic fixed point
of the distinguished element $D$.)
For example, we have
\[
(s(A),s(B),s(C))=(0,1,\infty).
\]
When we say that $\{P_j\}_{j\in\ZZ}$
is the sequence of elliptic generators
associated with a Farey triangle
$\sigma=\langle s_0,s_1,s_2\rangle$,
we always assume that
\[
(s(P_{3m}),s(P_{3m+1}),s(P_{3m+2}))=(s_0,s_1,s_2).
\]
Thus the index $j$ is well defined modulo a shift by a multiple of
$3$. We summarize the properties of elliptic generators
(cf. ~\cite[Section ~2.1]{ASWY}),
by using the following non-standard notation
which was introduced in \cite{Dicks-Sakuma}.
\begin{equation}
\label{eq:conjugation}
\text{For elements $X$, $Y$ of a group $G$, $\inn{X}(Y)$ denotes $XYX^{-1}$.}
\end{equation}
We view $\inn{X}$ as an element of the automorphism group of $G$.

\begin{proposition}
{\rm (1)}  Let $\{P_j\}_{j\in\ZZ}$ be the sequence of elliptic generators
associated with a Farey triangle $\sigma$.
Then the following hold for every $j\in\ZZ$.
\begin{enumerate}[\indent \rm (i)]
\item $\pi_1(\OO) \cong \langle P_j, P_{j+1}, P_{j+2}\; |\;
P_j^2=P_{j+1}^2=P_{j+2}^2=1\rangle$.

\item $P_{j+2}P_{j+1}P_j$ is equal to the distinguished element $D$
of $\pi_1(\OO)$.

\item With the notation of \eqref{eq:conjugation}, $P_{j+3m}= \inn{D^m}(P_j)$
for every $m \in \ZZ$.

\item $\langle s(P_j), s(P_{j+1}), s(P_{j+2})\rangle$
is a Farey triangle and its orientation is coherent with
$\langle 0,1,\infty\rangle$.
\end{enumerate}

{\rm (2)} Let $P$ and $P'$ be elliptic generators of the same slope.
Then $P'= \inn{D^m}(P)$ for some $m\in\ZZ$.
Let $\sigma=\langle s_0,s_1,s_2 \rangle$ and
$\sigma'=\langle s_0',s_1',s_2' \rangle$ be Farey triangles
sharing the edge
$\langle s_0,s_1\rangle = \langle s_0',s_2' \rangle$,
and let $\{P_j\}$ and $\{P_j'\}$, respectively,
be the sequences of elliptic generators associated with
$\sigma$ and $\sigma'$.
Then the following identity holds after
a shift of indices by a multiple of $3$
{\rm (}see Figure ~{\rm\ref{fig.3b})}.
\[
(P'_{3j},P'_{3j+1},P'_{3j+2})=(P_{3j},\inn{P_{3j+1}}(P_{3j+2}),P_{3j+1}).
\]
\end{proposition}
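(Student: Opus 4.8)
The plan is to read off every assertion from the geometric set-up of the previous section --- the Fuchsian group $\pi_1(\OO)$ acting on $\HH^2$, the lift to $\HH^2$ of the ideal triangulation $\trg(\sigma)$, and the fan $\{e_j\}_{j\in\ZZ}$ of its edges emanating from the parabolic fixed point $\infty$ of $D$ --- and to bootstrap the statements that are uniform in $j$ (and in $\sigma$) from the base case of the standard triangle $\langle 0,1,\infty\rangle$, where by construction $(P_0,P_1,P_2)=(A,B,C)$ and where the presentation \eqref{orbifold-group-presentation} together with the normalization $D=(ABC)^{-1}=CBA$ make (i), (ii) and (iv) evident for $j\equiv 0\pmod 3$.

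The one genuinely geometric input is used for (iii): the stabilizer of $\infty$ in $\pi_1(\OO)$ is the infinite cyclic group $\langle D\rangle$, and $D$, being an orientation-preserving isometry fixing $\infty$, permutes the fan $\{e_j\}$ preserving its left-to-right order; since the loop around the puncture of $\ptorus$ (represented by $D^2$) crosses the six edges of the cusp triangulation in the cyclic pattern $s_0,s_1,s_2,s_0,s_1,s_2$, the generator $D$ shifts the fan by exactly three, so $D(e_j)=e_{j+3}$. As $P_j$ is the \emph{unique} order-two element of $\pi_1(\OO)$ inverting $e_j$, conjugating by $D$ yields $DP_jD^{-1}=P_{j+3}$, hence $P_{j+3m}=D^mP_jD^{-m}$; this is (iii). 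Now (iv) follows: slopes are invariant under the deck group, so $s(P_{j+3})=s(P_j)$ by (iii), whence $(s(P_j),s(P_{j+1}),s(P_{j+2}))$ is always a cyclic permutation of $(s_0,s_1,s_2)$ and so spans the coherently oriented Farey triangle $\sigma$. For (ii) I would induct on $j$, in both directions, from the case $j\equiv 0\pmod 3$ (where $P_{j+2}P_{j+1}P_j=CBA=D$ because the triple serves as $(A,B,C)$): assuming $P_{j+2}P_{j+1}P_j=D$, substitute $P_{j+3}=DP_jD^{-1}$ and $D^{-1}=P_jP_{j+1}P_{j+2}$ and cancel using $P_i^2=1$ to obtain $P_{j+3}P_{j+2}P_{j+1}=D$, and symmetrically for the downward step.

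For (i), I observe that within one sequence the relabeling $(P_j,P_{j+1},P_{j+2})\mapsto(P_{j+1},P_{j+2},P_{j+3})$ is, by (ii)--(iii), the substitution $(a,b,c)\mapsto(b,c,cbabc)$ applied to a presentation triple, and this is an automorphism of the free product $\ZZ_2*\ZZ_2*\ZZ_2$ (its inverse sends $(a,b,c)$ to $(abcba,a,b)$); hence if one consecutive triple realizes \eqref{orbifold-group-presentation} so does the next, and since $(A,B,C)$ does, all do. For the second identity of part (2) I would argue at the level of tessellations: the lift of $\trg(\sigma')$ is obtained from the lift of $\trg(\sigma)$ by flipping every lift of the arc of slope $s_2$, the only fan edges affected being the old slope-$s_2$ edges, which disappear, and the newly created slope-$s_1'$ edges, one of which lies between $e_{3j}$ and $e_{3j+1}$ and is the flip of the bottom edge $\langle v_{3j},v_{3j+1}\rangle$ of the triangle $T_{3j}=\langle\infty,v_{3j},v_{3j+1}\rangle$ (here $v_j$ denotes the finite endpoint of $e_j$). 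Using repeatedly the local fact that the involution $P_{j+1}$ carries $T_j$ onto its neighbour $T_{j+1}$ across $e_{j+1}$ --- so that, for instance, $P_{3j+1}(v_{3j})=v_{3j+2}$ and $P_{3j+2}(v_{3j+1})=v_{3j+3}$ --- one checks that the involution inverting this new edge equals $P_{3j+1}P_{3j+2}P_{3j+1}$, while its two neighbours in the new fan are $e_{3j}$ and $e_{3j+1}$; this gives $(P'_{3j},P'_{3j+1},P'_{3j+2})=(P_{3j},\inn{P_{3j+1}}(P_{3j+2}),P_{3j+1})$ after the stated index shift.

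Finally, for the first assertion of part (2): an elliptic generator of slope $s$ is the unique involution in $\pi_1(\OO)$ inverting a geodesic from $\infty$ projecting to the arc $\delta_s$, and, fixing any Farey triangle $\sigma''$ having $s$ as a vertex, the set of all such geodesics coincides with the set of slope-$s$ fan edges of the lift of $\trg(\sigma'')$, which is a single $\langle D\rangle$-orbit since $D$ shifts that fan by three; hence a suitable $D^m$ carries the geodesic inverted by $P$ onto the one inverted by $P'$, and uniqueness forces $P'=D^mPD^{-m}$. I expect the main obstacle to be the bookkeeping in the flip identity of part (2): keeping track of the left-to-right order of the fan, the coherent orientations, and the multiple-of-three index shift while manipulating two lifted triangulations at once. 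Everything else is either a short group-theoretic manipulation or a matter of unwinding the definitions, and most of it is already recorded in \cite[\S2.1]{ASWY} and \cite{Dicks-Sakuma}.
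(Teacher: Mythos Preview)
Your argument is correct. The paper itself does not prove this proposition: it is presented as a summary of known facts with the parenthetical reference ``cf.~\cite[Section~2.1]{ASWY}'', and no proof is supplied. Your reconstruction --- identifying $\langle D\rangle$ as the stabilizer of $\infty$, observing that $D$ shifts the fan $\{e_j\}$ by three, deducing (iii) from the uniqueness of the edge-inverting involution, bootstrapping (i), (ii), (iv) from the base triple $(A,B,C)$ via the automorphism $(a,b,c)\mapsto(b,c,cbabc)$ of $\ZZ_2*\ZZ_2*\ZZ_2$, and reading off the flip formula in (2) from the diagonal exchange of the lifted triangulations --- is exactly the sort of argument that reference contains, and your own closing remark already acknowledges this. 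There is nothing to compare: you have supplied the proof the paper chose to omit.
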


\begin{figure}[h]
\begin{center}
\includegraphics{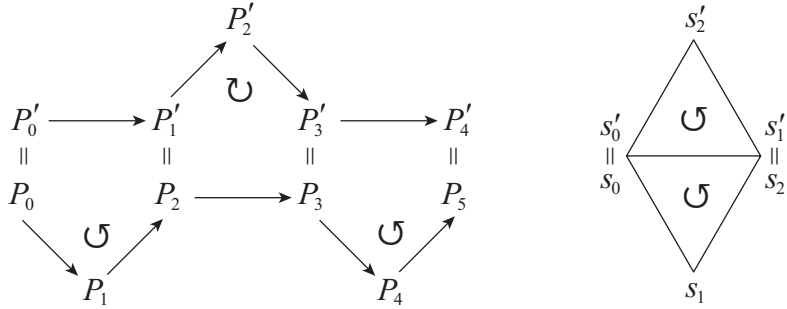}
\end{center}
\caption{\label{fig.3b}
Adjacent sequences of elliptic generators.
The symbol $\circlearrowleft$ ($\circlearrowright$, respectively) indicates
a triangle in which coherent reading of the vertices is counter-clockwise (clockwise, respectively).}
\end{figure}

The above proposition motivates us to introduce the following definition,
which is used in the description of a cusp triangulation of
the $2$-bridge link complement.

\begin{definition}
[Chain]
\label{def.chain2}
{\rm
By a {\it chain} of Farey triangles, we mean a (non-empty) finite sequence
$\Sigma=(\sigma_1,\dots,\sigma_m)$ of mutually distinct Farey triangles
such that $\sigma_{i+1}$ is adjacent to $\sigma_{i}$ for each $i$ ($1\le i\le m-1$).
}	
\end{definition}

\begin{definition}
[Elliptic generator complex]
\label{def.LL2}
{\rm
Let $\Sigma=(\sigma_1,\dots,\sigma_m)$ be a chain of Farey triangles.

(1) $\ABL{\Sigma}$ (or $\ABL{\sigma_1,\dots,\sigma_m}$)
denotes the simplicial complex constructed as follows,
and we call it the {\it elliptic generator complex}
\index{elliptic generator complex}
associated with the chain $\Sigma$:
\begin{enumerate}[\indent \rm (i)]
\item The vertex set $\ABL{\Sigma}^{(0)}$ is identified with
the set of elliptic generators whose slope is contained in $\Sigma^{(0)}$.

\item The edge set $\ABL{\Sigma}^{(1)}$ is identified with
the set of the ordered pairs $(P,Q)$ of elliptic generators,
which appears (successively in this order) in a sequence of elliptic generators
associated with a triangle of $\Sigma$.

\item The set $\ABL{\Sigma}^{(2)}$ of the $2$-simplices
is identified with the set of the elliptic generator triples $(P,Q,R)$
such that $(P,Q)$, $(Q,R)$ and $(P,R)$ are edges of $\ABL{\Sigma}$.
\end{enumerate}

(2) The self-map $P\mapsto \inn{D}(P)$ on $\ABL{\Sigma}^{(0)}$
induces a simplicial automorphism on $\ABL{\Sigma}$,
and we denote it by the symbol $D$.

(3) $\ABL{\Sigma}/\langle D\rangle$ and $\ABL{\Sigma}/\langle D^2\rangle$,
respectively, denote the abstract cell complex obtained as
the quotient of $\ABL{\Sigma}$ by the group
$\langle D\rangle$ and $\langle D^2\rangle$.
}
\end{definition}

The 1-skeleton of $\ABL{\Sigma}$ is obtained as the union of
the $1$-dimensional simplicial complex $\ABL{\sigma_i}$
($\sigma_i\in\Sigma^{(2)}$),
which is obtained by joining the vertices $\{P_j\}$,
the sequence of elliptic generators
associated with $\sigma_i$, successively by edges.

\section{Markoff maps and type-preserving representations}
\label{sec:Markoff}

In this section,
we recall basic facts concerning type-preserving
$PSL(2,\CC)$-representations of $\pi_1(\OO)$,
and explain a key proposition, Proposition ~\ref{key-proposition},
which was established by Bowditch ~\cite{Bowditch2} and
generalized by \cite{AMS} and \cite{Tan_Wong_Zhang_4}.

A $PSL(2,\CC)$-representation of $\pi_1(\ptorus)$ or $\pi_1(\OO)$
is {\it type-preserving} if it is irreducible
(equivalently, it does not have a common fixed point in $\partial\HH^3$)
and sends peripheral elements to parabolic transformations.
When we mention a type-preserving representation
$\rho:\pi_1(\OO)\to PSL(2,\CC)$, we always assume that the
image of the distinguished element $D$ is given by
\begin{equation}
\label{tp-normalization}
\rho(D)
=\begin{pmatrix}1&1\\0&1\end{pmatrix}.
\end{equation}
Since $\pi_1(\ptorus)$ is a free group,
any type-preserving representation
$\rho:\pi_1(\ptorus)\to PSL(2,\CC)$
lifts to a presentation
$\tilde\rho:\pi_1(\ptorus)\to SL(2,\CC)$,
which is {\it type-preserving},
in the sense that it is irreducible
and sends peripheral elements to parabolic transformations.
For a type-preserving $SL(2,\CC)$-representation
$\tilde\rho:\pi_1(\ptorus)\to SL(2,\CC)$,
let $\phi=\phi_{\tilde{\rho}}$ be the map from
$\DD^{(0)}=\QQQ$ to $\CC$ defined by
$\phi(s)=\mathrm{tr}(\tilde{\rho}(\beta_s))$.
Then it is a nontrivial {\it Markoff map} in the sense of \cite{Bowditch2}, that is:
\begin{enumerate}[\indent \rm (i)]
\item For any Farey triangle $\langle s_0,s_1,s_2 \rangle$,
the triple $(\phi(s_0),\phi(s_1),\phi(s_2))$ is a
nontrivial {\it Markoff triple}, that is,
it is a nontrivial solution of the Markoff equation
\begin{equation*}
x^2+y^2+z^2=xyz.
\end{equation*}
Here, being nontrivial means $(x,y,z)\ne (0,0,0)$.

\item For any pair of triangles $\langle s_0,s_1,s_2 \rangle$ and
$\langle s_1,s_2,s_3\rangle$ of $\DD$ sharing a common edge $\langle s_1,s_2\rangle$,
we have
\begin{equation*}
\phi(s_0)+\phi(s_3)=\phi(s_1)\phi(s_2).
\end{equation*}
\end{enumerate}

\begin{lemma}
\label{lem.trace}
Let $\rho:\pi_1(\OO)\to PSL(2,\CC)$ be a type-preserving representation
satisfying the normalization condition \eqref{tp-normalization},
and let $\phi$ be a Markoff map induced by a type-preserving representation
$\tilde\rho:\pi_1(\ptorus)\to SL(2,\CC)$
which is a lift of the restriction of $\rho$ to $\pi_1(\ptorus)$.

{\rm (1)}
Let $P$ be an elliptic generator of slope $s$.
\begin{enumerate}[\indent \rm (i)]
\item If $\phi(s)\ne 0$, then $\rho(P)$ is the $\pi$-rotation about the geodesic
with endpoints $c(\rho(P))\pm i/\phi(s)$,
where $c(\rho(P))=\rho(P)(\infty)\in\CC$.

\item If $\phi(s)=0$, then $\rho(P)$ is the $\pi$-rotation about a vertical geodesic,
i.e., a geodesic in the upper-half space model of the hyperbolic space
which has $\infty$ as an endpoint.
\end{enumerate}

{\rm (2)}
Let $\{P_j\}$ be the sequence of elliptic generators
associated with a Farey triangle $\sigma=\langle s_0,s_1,s_2 \rangle$.
\begin{enumerate}[\indent \rm (i)]
\item If $\phi(s_1)\phi(s_2)\ne 0$, then
\begin{equation*}
c(\rho(P_{2}))-c(\rho(P_{1}))
=\frac{\phi(s_{0})}{\phi(s_{1})\phi(s_{2})}.
\end{equation*}

\item If $\phi(s_0)\phi(s_1)\phi(s_2)\ne 0$, then
$\{c(\rho(P_j))\}$ is a sequence of points in $\CC$
such that $c(\rho(P_j))+1=c(\rho(P_{j+3}))$.

\item If $\phi(s_0)=0$, then $\phi(s_1)\phi(s_2)\ne 0$ and
$c(\rho(P_{1+3k}))=c(\rho(P_{2+3k}))$
and $\rho(P_{3k})$ is a $\pi$-rotation about a vertical geodesic,
where the end point of its axis in $\CC$ is the midpoint
between $c(\rho(P_{-2+3k}))=c(\rho(P_{-1+3k}))$
and $c(\rho(P_{1+3k}))=c(\rho(P_{2+3k}))$.

\item Suppose $\phi(s_0')=0$,
where $s_0'$ is the vertex of $\DD$ opposite to $s_0$
with respect to the edge $\langle s_1, s_2\rangle$.
Then $\phi(s_0)\phi(s_1)\phi(s_2)\ne 0$ and
$c(\rho(P_{-1+3k}))=c(\rho(P_{1+3k}))$.
\end{enumerate}
\end{lemma}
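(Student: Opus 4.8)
The plan is to compute everything by placing matrices and translating trace identities into geometry in the upper half-space model $\HH^3$, where $\rho(D)(z)=z+1$ fixes $\infty$. First I would treat part (1). For an elliptic generator $P$ of slope $s$, the element $\rho(P)$ is a $\pi$-rotation (an involution in $PSL(2,\CC)$), so its axis is a geodesic; the point $c(\rho(P))=\rho(P)(\infty)$ is one endpoint of that axis on $\partial\HH^3=\CC\cup\{\infty\}$, and the other is $\rho(P)(c(\rho(P)))$ — but since $\rho(P)$ is an involution it swaps the two endpoints, so one endpoint is $\rho(P)(\infty)$ and the other is $\rho(P)^{-1}(\infty)=\rho(P)(\infty)$ only when the axis is vertical. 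To get the quantitative statement I would write a lift $\tilde\rho(\beta_s)$, relate $\beta_s$ to the elliptic generator via $\beta_s$ being (conjugate to) a product like $P_{j}P_{j+1}$ or $D$-type words, and choose the $SL(2,\CC)$-lift $\hat P$ of $\rho(P)$ that is \emph{traceless} (any involution in $PSL(2,\CC)$ lifts to a traceless matrix in $SL(2,\CC)$, $\hat P^2=-I$). A traceless matrix $\begin{pmatrix} a & b \\ c & -a\end{pmatrix}$ with $a^2+bc=-1$ acts as the half-turn about the geodesic whose endpoints are the fixed points of $z\mapsto (az+b)/(cz-a)$, i.e. the roots of $cz^2-2az-b=0$, which are $(a\pm i)/c$ when $c\neq 0$. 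The identification of $c$ with $\pm\phi(s)$ (up to the normalization of $D$) and of $a/c$ with $\rho(P)(\infty)=c(\rho(P))$ is then a direct computation using the known form of the elliptic generators relative to $D(z)=z+1$; this yields endpoints $c(\rho(P))\pm i/\phi(s)$ when $\phi(s)\ne 0$, and forces $c=0$, hence a vertical axis, precisely when $\phi(s)=0$. This establishes (1)(i) and (1)(ii).

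For part (2)(i), I would use Proposition \ref{key-proposition}-style bookkeeping together with the relation $P_{j+2}P_{j+1}P_j=D$ and $P_{j+3}=\inn{D}(P_j)$ from the Proposition in Section \ref{sec:orbifold}. The cleanest route is: express $\rho(P_2)$ in terms of $\rho(P_0),\rho(P_1)$ via $\rho(P_2)=\rho(D)\rho(P_0)^{-1}\rho(P_1)^{-1}$, take traceless lifts normalized by $\tr=\phi(s_i)$, and read off $\rho(P_2)(\infty)=\rho(D)\rho(P_0)^{-1}\rho(P_1)^{-1}(\infty)$. Expanding the entry that computes the image of $\infty$ and subtracting $c(\rho(P_1))=\rho(P_1)(\infty)$ gives, after simplification using the Markoff equation $x^2+y^2+z^2=xyz$ for $(x,y,z)=(\phi(s_0),\phi(s_1),\phi(s_2))$, the stated value $\phi(s_0)/(\phi(s_1)\phi(s_2))$. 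Part (2)(ii) is then immediate: apply (2)(i) to the triple $\sigma$ and to its $D$-translate, noting $c(\rho(P_{j+3}))=\rho(D)(c(\rho(P_j)))=c(\rho(P_j))+1$ since $\rho(D)(z)=z+1$; alternatively observe the increments $c(\rho(P_{j+1}))-c(\rho(P_j))$ telescope around the triangle and use that the three fractions $\phi(s_{i-1})/(\phi(s_i)\phi(s_{i+1}))$ sum to $1$, which is exactly the Markoff equation divided by $\phi(s_0)\phi(s_1)\phi(s_2)$.

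For parts (2)(iii) and (2)(iv) I would feed the degenerate case $\phi=0$ into the formulas just derived. If $\phi(s_0)=0$, then the Markoff equation at $\langle s_0,s_1,s_2\rangle$ gives $\phi(s_1)^2+\phi(s_2)^2=0$, which is nonzero unless both vanish; nontriviality of the Markoff triple rules out $\phi(s_1)=\phi(s_2)=0$, so $\phi(s_1)\phi(s_2)\ne 0$. Then (2)(i) gives $c(\rho(P_2))-c(\rho(P_1))=0$, i.e. $c(\rho(P_{1+3k}))=c(\rho(P_{2+3k}))$, and by (1)(ii) $\rho(P_{3k})$ is a half-turn about a vertical geodesic; its endpoint in $\CC$ is $\rho(P_{3k})(\infty)$, and since $\rho(P_{3k})$ conjugates the "incoming" pair to the "outgoing" pair (via $P_{3k}=P_{3k+1}^{-1}P_{3k-1}^{-1}\cdots$ unwound from $D$) one checks it exchanges $c(\rho(P_{-1+3k}))$ with $c(\rho(P_{1+3k}))$, forcing the vertical axis endpoint to be their midpoint. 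Part (2)(iv): if $\phi(s_0')=0$ where $s_0'$ is opposite $s_0$ across $\langle s_1,s_2\rangle$, then by the edge relation $\phi(s_0)+\phi(s_0')=\phi(s_1)\phi(s_2)$, so $\phi(s_0)=\phi(s_1)\phi(s_2)$, and applying (2)(iii) to the adjacent triangle $\langle s_0',s_1,s_2\rangle$ (whose elliptic generators relate to $\{P_j\}$ by part (2) of the Proposition, $(P'_{3j},P'_{3j+1},P'_{3j+2})=(P_{3j},\inn{P_{3j+1}}(P_{3j+2}),P_{3j+1})$) translates the coincidence $c(\rho(P'_{1+3k}))=c(\rho(P'_{2+3k}))$ back into $c(\rho(P_{-1+3k}))=c(\rho(P_{1+3k}))$; nonvanishing of $\phi(s_0)\phi(s_1)\phi(s_2)$ follows from $\phi(s_0)=\phi(s_1)\phi(s_2)$ and nontriviality. \emph{The main obstacle} I anticipate is bookkeeping: getting the signs and index shifts right when passing between the traceless $SL(2,\CC)$-lifts and the $PSL(2,\CC)$-action, and correctly identifying which word in the $P_j$ represents $\rho(P_{3k})$ as a conjugating element in (2)(iii) — the geometry is forced once the algebra is pinned down, but the normalization conventions (the choice \eqref{tp-normalization} for $\rho(D)$ and the ordering of $A(\infty),B(\infty),C(\infty)$ along $\RR$) must be tracked with care.
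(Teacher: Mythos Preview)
Your plan is essentially a direct proof of what the paper does \emph{not} prove: the paper simply cites \cite[Proposition~2.2.4]{ASWY} for parts (1) and (2)(i)--(iii), and only argues the nonvanishing assertion in (2)(iv) (via $\phi(s_0')=0\Rightarrow\phi(s_2)=\pm i\phi(s_1)$, $\phi(s_0)=\pm i\phi(s_1)^2$, and nontriviality), leaving the coincidence $c(\rho(P_{-1+3k}))=c(\rho(P_{1+3k}))$ to follow from the already-cited (2)(i) applied across the two adjacent increments. Your from-scratch matrix computation is exactly the route ASWY takes, so the content is right; what it buys is self-containment, at the cost of the bookkeeping you yourself flag as the obstacle.

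Two points to tighten. First, your opening sentence for (1) is wrong: $c(\rho(P))=\rho(P)(\infty)$ is \emph{not} an endpoint of the axis of the half-turn---it is the midpoint in $\CC$ of the two axis endpoints $c(\rho(P))\pm i/\phi(s)$. You abandon this and switch to the correct traceless-matrix computation, so no harm done, but delete the false start. Second, the crux you gloss as ``a direct computation'' is the identification of the $(2,1)$-entry $c_j$ of the traceless lift $\hat P_j$ with $\pm\phi(s_j)$. The clean way to see it: from $P_{j+1}P_jP_{j-1}=D$ one gets $\hat P_{j+1}\hat P_{j-1}=\pm\hat P_j\hat D$, and since $\hat P_j\hat D$ has trace equal to the $(2,1)$-entry $c_j$ of $\hat P_j$ (multiply it out), while $\hat P_{j+1}\hat P_{j-1}$ represents $\tilde\rho(\beta_{s_j})$ up to sign, you get $c_j=\pm\phi(s_j)$. (The sign ambiguity is harmless: both $\{c(\rho(P))\pm i/\phi(s)\}$ and the ratio $\phi(s_0)/(\phi(s_1)\phi(s_2))$ are invariant under the allowed sign changes of a Markoff triple.) Also, your phrase ``traceless lifts normalized by $\tr=\phi(s_i)$'' is garbled---the $\hat P_i$ themselves have trace $0$; it is the products $\hat P_{j+1}\hat P_{j-1}$ whose traces realize $\phi(s_j)$. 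For the midpoint claim in (2)(iii), the identity $P_{-1}=P_0P_1D$ (from $P_1P_0P_{-1}=D$) gives $c(\rho(P_{-1}))=\rho(P_0)(c(\rho(P_1)))=2p_0-c(\rho(P_1))$ directly, where $p_0$ is the finite endpoint of the vertical axis; this is cleaner than the conjugation language you use. Your (2)(iv) argument matches the paper's exactly for nonvanishing.
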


\begin{proof}
The assertions of the lemma except for (2-iv)
are contained in \cite[Proposition ~2.2.4]{ASWY}.
To prove (2-iv), note that the assumption $\phi(s_0')=0$
implies that $\phi(s_2)=\pm i \phi(s_1)$ and $\phi(s_0)=\pm i \phi(s_1)^2$.
Since $(\phi(s_0'),\phi(s_1),\phi(s_2))$ is nontrivial,
this implies that $\phi(s_1)\ne 0$ and so
none of $\phi(s_j)$ $(j\in\{0,1,2\}$) is $0$.
\end{proof}

The following definition
is used in a description of cusp triangulations of
hyperbolic $2$-bridge link complements.

\begin{definition}
\label{def.LL}
{\rm
Let $\rho:\pi_1(\OO)\to PSL(2,\CC)$ be a type-preserving representation
satisfying the normalization condition \eqref{tp-normalization}.

(1) Let $\sigma=\langle s_0,s_1,s_2 \rangle$ be
a Farey triangle such that $\phi(s_0)\phi(s_1)\phi(s_2)\ne 0$.
Then $\RBL{\rho}{\sigma}$ denotes the (possibly singular) bi-infinite
zigzag line in $\CC$
which is obtained by successively joining
the points $\{c(\rho(P_j))\}$,
where $\{P_j\}$ is the sequence of elliptic
generators associated with $\sigma$.
We note that $\RBL{\rho}{\sigma}$
is invariant by the transformation $z\mapsto z+1$.

(2) We say that $\RBL{\rho}{\sigma}$ is {\it simple},
if the underlying space $|\RBL{\rho}{\sigma}|$
is homeomorphic to the real line $\RR$
and $\{c(\rho(P_j))\}$
sits on it in the order of the suffix $j\in\ZZ$.

(3) We say that $\RBL{\rho}{\sigma}$
is {\it simply folded} at the vertex $c(\rho(P_j))$,
if $c(\rho(P_{j-1}))=c(\rho(P_{j+1}))$ and
the horizontal line, $L$, passing through this point does not contain
$c(\rho(P_j))$.
In this case,
the underling space $|\RBL{\rho}{\sigma}|$ is the union of $L$
and the ``spikes'' joining $c(\rho(P_{j+3k}))$ and
$c(\rho(P_{j-1+3k}))=c(\rho(P_{j+1+3k}))\in L$,
where $k$ runs over all integers.
We call $L$ the {\it horizontal line} determined by $\RBL{\rho}{\sigma}$.
If $s$ is the slope of the elliptic generator $P_j$,
we also say that $\RBL{\rho}{\sigma}$ is {\it simply folded} at the slope $s$.
We say that $\RBL{\rho}{\sigma}$
is {\it simply folded} if it is simply folded at some slope
(which is a vertex of $\sigma$).

(4) Let $\Sigma=(\sigma_1,\sigma_2,\dots,\sigma_m)$ be a chain of Farey triangles
such that $\phi^{-1}(0)\cap \Sigma^{(0)}=\emptyset$. Then $\RBL{\rho}{\Sigma}$
denotes the union of the bi-infinite zigzag lines $\{\RBL{\rho}{\sigma_i}\}$ in $\CC$.
}
\end{definition}

\begin{remark}
\label{remark:simply-folded}
{\rm
By Lemma ~\ref{lem.trace}(2),
$\RBL{\rho}{\sigma}$ is simply folded at the slope $s$
only when $\phi(s')=0$,
where $s'$ is the vertex of $\DD$ opposite to $s$
with respect to the edge of $\sigma$ which does not contain $s$.
}
\end{remark}

Let $\TT$ be a binary tree
(a countably infinite simplicial tree
all of whose vertices have degree $3$)
properly embedded in $\HH^{2}$ dual to $\DD$.
A {\it directed edge}, $\ee$,
of $\TT$ can be thought of an ordered pair
of adjacent vertices of $\TT$,
referred to as the {\it head} and {\it tail} of $\ee$.
Following \cite{Bowditch2},
we use the notation
$\ee \leftrightarrow (s_1,s_2;s_0,s_3)$
to mean that
$s_0$, $s_1$, $s_2$ and $s_3$
are the ideal vertices of $\DD$ such that
\begin{enumerate}[\indent \rm (i)]
\item the Farey edge $\langle s_1,s_2\rangle$ is
the dual to $\ee$, and

\item the Farey triangle $\langle s_0,s_1,s_2 \rangle$
($\langle s_1,s_2,s_3\rangle$, respectively) is dual to the head
(tail, respectively) of $\ee$, if $\ee \leftrightarrow (s_1,s_2;s_0,s_3)$.
\end{enumerate}
If $\phi(s_1)\phi(s_2)\ne 0$, then we set
\[
\psi(\ee):=\frac{\phi(s_0)}{\phi(s_1)\phi(s_2)}.
\]
Then Lemma ~\ref{lem.trace}(2-i) is rephrased as follows:
\begin{equation}
\label{a=x/yz}
c(\rho(P_{2}))-c(\rho(P_{1}))=\psi(\ee).
\end{equation}
We regard $\psi=\psi_{\phi}$ as a map from the set
of oriented edges $\ee \leftrightarrow (s_1,s_2;s_0,s_3)$
of $\TT$ such that $\phi(s_1)\phi(s_2)\ne 0$, and we call it
the {\it complex probability map}
associated with the Markoff map $\phi$.
We note that this map is determined by
the type-preserving representation
$\rho:\pi_1(\ptorus)\to PSL(2,\CC)$ obtained from
the type-preserving $SL(2,\CC)$-representation of
$\pi_1(\ptorus)$ inducing the Markoff map $\phi$.
So we also call $\psi$ the complex probability map
associated with $\rho$.

By a {\it complementary region} of $\TT$, we mean the
closure of a connected component of $\HH^2-\TT$.
Let $\Omega$ be the set
of complementary regions of $\TT$.
Then there is a natural bijection from $\Omega$ to $\QQQ$.
In the following we identify $\Omega$ with $\QQQ$.
Let $\ee \leftrightarrow (s_1,s_2;s_0,s_3)$ be a directed edge of $\TT$.
If we remove the interior of $e$ from $\TT$,
we are left with two disjoint subset,
which we denote by $\TT^{\pm}(\ee)$,
so that $e\cap\TT^{+}(\ee)$ is the head of $\ee$ and
$e\cap\TT^{-}(\ee)$ is its tail.
Let $\Omega^{\pm}(\ee)\subset\Omega$
be the set of regions whose boundaries lie in
$\Sigma^{\pm}(\ee)$, and set $\Omega^0(e)=\{s_1,s_2\}$.
We see that $\Omega$ can be written as the disjoint union:
$\Omega=\Omega^0(e)\cup\Omega^+(\ee)\cup\Omega^-(\ee)$.
Set $\Omega^{0-}(\ee)=\Omega^0(e)\cup\Omega^-(\ee)$
and $\Omega^{0+}(\ee)=\Omega^0(e)\cup\Omega^+(\ee)$.
We quote the following key proposition from
\cite[Proposition ~5.2]{AMS} which is
a slight extension of \cite[Proposition ~3.13]{Bowditch2}
(see \cite{Tan_Wong_Zhang_4} for further extension).

\begin{proposition}
\label{key-proposition}
Let $\phi$ be a Markoff map and $\ee$ a directed edge of $\TT$
satisfying the following conditions.
\begin{enumerate}[\indent \rm (i)]
\item The set $\{s\in\Omega^-(\ee)\,|\,\ |\phi(s)|\le 2\}$ is
finite.

\item $\Omega^{0-}(\ee)\cap\phi^{-1}(-2,2)=\emptyset$.
\end{enumerate}
Then
\[
\psi(\ee)=\sum_{s\in\Omega^0(e)} h(\phi(s))
+2\sum_{s\in\Omega^-(\ee)} h(\phi(s)).
\]
Moreover, the above sum converges absolutely.
\end{proposition}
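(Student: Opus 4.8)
The plan is to prove this via the summation/telescoping argument that goes back to Bowditch, adapting the extension already recorded in \cite[Proposition 5.2]{AMS}. The key object is the complex probability map $\psi$, which satisfies the cocycle-type relation coming from the edge relation of the Markoff map: if $\ee \leftrightarrow (s_1,s_2;s_0,s_3)$ and $\ee_1$, $\ee_2$ are the two edges of $\TT$ emanating from the tail of $\ee$ (dual to the Farey edges $\langle s_1,s_3\rangle$ and $\langle s_2,s_3\rangle$), then a direct computation from $\phi(s_0)+\phi(s_3)=\phi(s_1)\phi(s_2)$ gives
\[
\psi(\ee) = h(\phi(s_1)) + h(\phi(s_2)) + \psi(\ee_1') + \psi(\ee_2'),
\]
where $h(x)$ is the function $\dfrac{1}{1+x^2/(\text{something})}$ normalized so that $h(\phi(s))$ corresponds to $\dfrac{1}{1+e^{l_{\rho}(\beta_s)}}$ via $\phi(s)=\tr\tilde\rho(\beta_s)=2\cosh(l_\rho(\beta_s)/2)$, and $\ee_i'$ are the appropriately reoriented edges pointing ``away'' into $\TT^-(\ee)$. (One checks $h(\phi(s)) = 1/(1+e^{l_\rho(\beta_s)})$ up to the sign ambiguity, which disappears in the sum $h(x)+h(-x)$.)

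First I would set up the bookkeeping: fix the directed edge $\ee$ and, under hypotheses (i) and (ii), iterate the relation above along the tree $\TT^-(\ee)$. Each iteration replaces $\psi$ of an edge by $h$-values on the two new complementary regions plus $\psi$ of two deeper edges. Regions in $\Omega^-(\ee)$ get visited exactly twice in the course of this branching (once from each side), which is precisely the source of the factor $2$ in front of $\sum_{s\in\Omega^-(\ee)}$, while the two ``boundary'' regions $\Omega^0(e)=\{s_1,s_2\}$ are visited once — giving the coefficient $1$. The subtle points are (a) convergence and (b) that the ``error terms'' $\psi$ of the deep edges tend to zero. For (a), hypothesis (i) (finiteness of $\{s\in\Omega^-(\ee):|\phi(s)|\le 2\}$) together with the Fibonacci-type growth of $|\phi(s)|$ along the tree — a standard consequence of the Markoff relation away from the region $\phi^{-1}(-2,2)$, guaranteed by hypothesis (ii) — forces $|\phi(s)|\to\infty$ at a geometric rate, hence $h(\phi(s))=O(1/|\phi(s)|^2)$ is summable; this is exactly the estimate proved in \cite{Bowditch2} and extended in \cite{AMS}.

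Concretely, I would: (1) recall/re-derive the one-step identity for $\psi$ from the Markoff edge relation; (2) invoke the growth estimate on $|\phi|$ along $\TT^-(\ee)$ under (i)–(ii) to control the tails (this is where I would quote the relevant lemmas of \cite{Bowditch2, AMS} rather than reprove them); (3) perform the telescoping/summation over the binary tree, carefully tracking that each $s\in\Omega^-(\ee)$ is counted with multiplicity $2$ and each $s\in\Omega^0(e)$ with multiplicity $1$, and that the remainder terms vanish in the limit; (4) conclude absolute convergence of the resulting series from the summability established in step (2). The main obstacle is step (2)–(3) done rigorously: one must show not merely that individual deep $\psi$-values are small, but that the \emph{sum} of the remainder $\psi$-values over all edges at tree-distance $n$ from $\ee$ tends to $0$ as $n\to\infty$, which requires the geometric (not merely eventual) growth of $|\phi|$; this is the technical heart and is precisely what hypotheses (i) and (ii) are engineered to supply. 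Since this is quoted verbatim as \cite[Proposition 5.2]{AMS}, in our setting the ``proof'' is really the citation together with a sentence identifying our $\phi$, $\psi$ with theirs; but the above is how one would reconstruct it from scratch.
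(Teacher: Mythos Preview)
The paper does not prove this proposition at all: it is stated as a quotation from \cite[Proposition~5.2]{AMS} (itself extending \cite[Proposition~3.13]{Bowditch2}), with no argument supplied. You correctly identify this, and your final sentence---that in this paper the ``proof'' is simply the citation together with the identification of $\phi$ and $\psi$---is exactly what the paper does.

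Your reconstruction sketch is in the right spirit (Bowditch's telescoping over finite subtrees of $\TT^-(\ee)$, with Fibonacci-type growth of $|\phi|$ supplying absolute convergence), but the one-step identity you write is not quite the correct one: the actual mechanism uses the two basic relations $\psi(\ee)+\psi(-\ee)=1$ (from the edge relation) and $\psi(\ee_1)+\psi(\ee_2)+\psi(\ee_3)=1$ around a vertex (from the Markoff equation), together with the formula $\psi(\ee)=h(\phi(s_1))+h(\phi(s_2))$ when the head of $\ee$ is a ``sink'' for the Bowditch arrow orientation. The bookkeeping of multiplicities $1$ and $2$ then comes out of summing the vertex relation over a finite subtree and cancelling interior contributions, rather than from the recursion you wrote. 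Since you are only sketching and defer to the cited references for the details, this does not affect the correctness of your proposal as a substitute for what the paper actually contains.
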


Here, $h:\mathbb{C}-[-2,2] \to \mathbb{C}$ is defined by
$h(x)=\frac{1}{2}\left(1-\sqrt{1-4/x^2}\right)$,
where we adopt the convention that the real
part of a square root is always non-negative.
For each $s\in\Omega=\QQQ$,
let $l(\rho(\beta_s))$ be the complex translation length
of the isometry $\rho(\beta_s)$ of $\mathbb{H}^3$,
where we abuse notation to denote by $\beta_s$
an element of $\pi_1(\ptorus)$ represented by
the simple loop $\beta_s$ of slope $s$.
Then we have the following (see \cite[p.721]{Bowditch2}):
\[
h(\phi(s))=\frac{1}{1+e^{l(\rho(\beta_s))}}.
\]

At the end of this section,
we give a necessary and sufficient condition
for a type-preserving $PSL(2,\CC)$-representation
to descend to a representation of the 2-bridge link group $\pi_1(S^3-K(r))$.

\begin{lemma}
\label{2-bridge-markoff}
Let $\phi$ be a nontrivial Markoff map,
and let $\rho:\pi_1(\OO)\to PSL(2,\CC)$ be a type-preserving representation
induced by $\phi$.
Then the restriction of $\rho$ to $\PConway$ descends to a
representation of the $2$-bridge link group $\pi_1(S^3-K(r))$
if and only if $\phi(\infty)=\phi(r)=0$.
\end{lemma}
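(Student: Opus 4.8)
The plan is to prove Lemma~\ref{2-bridge-markoff} by identifying exactly when the $PSL(2,\CC)$-representation $\rho$ factors through the quotient $\pi_1(\PConway)/\llangle \alpha_\infty, \alpha_r \rrangle \cong \pi_1(S^3-K(r))$. First I would recall from Section~\ref{sec:orbifold} that $\pi_1(\PConway)$ sits inside $\pi_1(\OO)$ as an index-$4$ normal subgroup, and that, under the slope correspondence, the loop $\alpha_s$ in $\PConway$ maps to a loop whose image in $\OO$ is $\beta_s$ traversed with multiplicity two; concretely, $\alpha_s$ is conjugate (in $\pi_1(\PConway)$) to an element whose image in $\pi_1(\OO)$ is $\beta_s^2$. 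Hence $\rho(\alpha_s)$ is trivial in $PSL(2,\CC)$ precisely when $\rho(\beta_s)^2 = \pm I$, i.e.\ when $\rho(\beta_s)$ is trivial or an order-$2$ rotation. Using Lemma~\ref{lem.trace}, $\rho(\beta_s)$ has $\phi(s) = \tr\tilde\rho(\beta_s)$, and $\rho(\beta_s)^2 = \pm I$ in $PSL(2,\CC)$ forces $\tr^2 \in \{0, 4\}$; since $\phi(s) = \pm 2$ would make $\rho(\beta_s)$ parabolic or identity, and $\phi(\infty) = \pm 2$ is incompatible with the type-preserving normalization \eqref{tp-normalization} unless one checks the relevant Markoff triple, the clean statement is that $\rho(\alpha_s) = 1$ in the quotient we need iff $\phi(s) = 0$.

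Next I would carry out the ``only if'' direction: if $\rho$ descends to $\pi_1(S^3-K(r))$, then since $\alpha_\infty$ and $\alpha_r$ are null-homotopic in $S^3-K(r)$ (they bound the meridian disks of the upper and lower tangles — this is exactly the $s_0 \in \{\infty, r\}$ case of Theorem~\ref{previous_results}(2)(3)), their $\rho$-images must be trivial. By the trace computation above this gives $\phi(\infty) = \phi(r) = 0$. Note one must be slightly careful: a priori $\rho(\beta_\infty)^2 = \pm I$ allows $\phi(\infty) = 0$ or $\phi(\infty) = \pm 2$; I would rule out the latter by observing that $\phi(\infty) = \pm 2$ would mean $\tilde\rho(\beta_\infty)$ is parabolic, and since $\beta_\infty$ is a peripheral-type curve only in $\ptorus$ but an essential interior curve relative to the cusp structure, combined with irreducibility of $\rho$ and the Markoff equation $x^2+y^2+z^2 = xyz$ on the triangle $\langle \infty, 0, r \rangle$ (or $\langle 0, 1, \infty\rangle$), a value of $\pm 2$ at a vertex forces degeneracy; alternatively one invokes that $K(r)$ is hyperbolic so $\rho = \rho_r$ is discrete faithful and $\rho(\beta_\infty)$ cannot be parabolic as $\beta_\infty$ is not peripheral in $S^3-K(r)$.

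For the ``if'' direction: suppose $\phi(\infty) = \phi(r) = 0$. Then by Lemma~\ref{lem.trace}(1)(ii), $\rho(P)$ for an elliptic generator $P$ of slope $\infty$ or $r$ is a $\pi$-rotation about a vertical geodesic, and $\rho(\beta_\infty), \rho(\beta_r)$ are trivial in $PSL(2,\CC)$ (being products of such generators squaring appropriately, or directly since $\tr = 0$ gives order $2$ in $SL(2,\CC)$ hence trivial in $PSL(2,\CC)$ for the relevant squared element). Consequently $\rho(\alpha_\infty) = \rho(\alpha_r) = 1$, so $\rho|_{\pi_1(\PConway)}$ kills the normal closure $\llangle \alpha_\infty, \alpha_r \rrangle$ and therefore factors through $\pi_1(\PConway)/\llangle \alpha_\infty, \alpha_r\rrangle \cong \pi_1(S^3-K(r))$.

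The main obstacle I expect is the bookkeeping in translating between $\alpha_s \in \pi_1(\PConway)$ and $\beta_s \in \pi_1(\OO)$ — in particular making precise, via the elliptic-generator description of Section~\ref{sec:orbifold}, that killing $\alpha_\infty$ and $\alpha_r$ in $\pi_1(\PConway)$ is equivalent to the vanishing of $\phi$ at the two slopes rather than at some larger $\hat\Gamma_r$-orbit, and handling the $\pm I$ ambiguity inherent in passing from $SL(2,\CC)$-traces to $PSL(2,\CC)$-elements. Schubert's theorem and Theorem~\ref{previous_results} guarantee that no other $\alpha_s$ need be killed, so once the slope-$\infty$ and slope-$r$ loops are controlled the rest follows; but verifying that $\phi$ vanishing only at these two Farey vertices (and their orbit) is consistent with $\phi$ being a genuine Markoff map — i.e.\ that such maps exist and are exactly the $\rho_r$'s — is where the interplay with the Markoff equation does the real work.
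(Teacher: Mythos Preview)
Your overall strategy matches the paper's: reduce the question to whether $\rho(\alpha_\infty)=\rho(\alpha_r)=1$, use $\alpha_s=\beta_s^2$, and translate this into a trace condition on $\tilde\rho(\beta_s)$. The ``if'' direction is fine. The gap is in how you dispose of the case $\phi(s)=\pm 2$.

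You correctly observe that $\rho(\beta_s)^2=1$ in $PSL(2,\CC)$ forces $\tr\tilde\rho(\beta_s)\in\{0,\pm 2\}$. But note that if $\tr\tilde\rho(\beta_s)=\pm 2$ \emph{and} $\tilde\rho(\beta_s)^2=\pm I$, then $\tilde\rho(\beta_s)$ cannot be genuinely parabolic (a parabolic squared is still parabolic, never $\pm I$); it must be $\pm I$, i.e.\ $\rho(\beta_s)=1$. So the only thing to exclude is $\rho(\beta_s)=1$. Your three attempts to do this are all problematic: the normalization \eqref{tp-normalization} says nothing about $\rho(\beta_s)$; the claim that $\phi=\pm 2$ at a vertex ``forces degeneracy'' of the Markoff triple is false (e.g.\ $(2,y,z)$ with $(y-z)^2=-4$ has plenty of nontrivial solutions); and invoking that $\rho=\rho_r$ is discrete and faithful is illegitimate, since the lemma is stated for an \emph{arbitrary} type-preserving representation, not just the holonomy.

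The paper's fix is a single line you essentially have the ingredients for but never use cleanly: irreducibility of $\rho$ forces $\rho(\beta_s)\ne 1$ for every $s\in\QQQ$. Indeed $\beta_s$ is primitive in the free group $\pi_1(\ptorus)$ (it is part of a free generating pair), so $\rho(\beta_s)=1$ would make the image cyclic and hence reducible. With that observation, $\rho(\alpha_s)=1$ is equivalent to $\rho(\beta_s)$ being elliptic of order $2$, i.e.\ $\phi(s)=0$, and both directions follow immediately. Your final paragraph about $\hat\Gamma_r$-orbits, Schubert's theorem, and existence of such Markoff maps is unnecessary for this lemma and should be dropped.
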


\begin{proof}
Though this is proved in \cite[Proof of Proposition ~4.1]{Sakuma2},
we give a proof for completeness.
Since $\pi_1(S^3-K(r))=\pi_1(\PConway)/
\langle\langle\alpha_{\infty},\alpha_r\rangle\rangle$,
a representation $\rho$ descends to a representation of
the link group if and only if $\rho(\alpha_{\infty})=\rho(\alpha_{r})=1$.
Since $\alpha_{s}=\beta_s^2$,
this condition is equivalent to the condition that
either $\rho(\beta_s)$ is the identity or an elliptic transformation of order $2$
for each $s=\infty$ and $r$.
However, since $\rho$ is irreducible by the assumption,
we have $\rho(\beta_s)\ne 1$ for any $s\in\QQQ$.
Thus the above condition is equivalent to the condition
that both $\rho(\beta_{\infty})$ and $\rho(\beta_r)$ are elliptic of order $2$,
which in turn is equivalent to the condition $\phi(\infty)=\phi(r)=0$.
\end{proof}

\section{The canonical decomposition of $S^3-K(r)$
and the induced cusp triangulation}
\label{sec:canonical}

In this section, we describe the canonical
decompositions of hyperbolic $2$-bridge link complements
and the induced cusp triangulations,
following \cite{Gueritaud, Sakuma-Weeks}.
Let $K(r)$ be a hyperbolic $2$-bridge link.
Then we may assume $r=q/p$, where $p$ and $q$ are relatively prime integers
such that $2\le q < p/2$, and so $r$ has the continued fraction expansion
$[a_1,a_2, \dots, a_n]$, where
$(a_1, \dots, a_n) \in (\mathbb{Z}_+)^n$, $a_1\ge 2$, $a_n \ge 2$ and $n\ge 2$.
Set $c=\sum_{i=1}^n a_i$, and let $\Sigma(r)=(\sigma_1,\sigma_2,\dots,\sigma_c)$
be the chain of Farey triangles
which intersect the hyperbolic geodesic joining $\infty$ with $r$ in this order
(see Figure ~\ref{fig.farey_triangle_sequence}).

\begin{figure}[h]
\begin{center}
\includegraphics{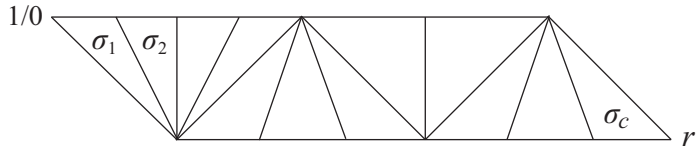}
\end{center}
\caption{\label{fig.farey_triangle_sequence}
The chain $\Sigma(r)$ of Farey triangles.}
\end{figure}

Just as with the once-punctured torus $\ptorus$,
each Farey triangle
$\sigma=\langle s_0,s_1,s_2\rangle$ determines
a (topological) ideal triangulation
of the $4$-times punctured sphere $\PConway=(\RR^2-\ZZ^2)/H$.
To be precise, the union of the lines in $\RR^2-\ZZ^2$
passing through the punctures of slopes $\{s_0,s_1,s_2\}$
determines an $H$-invariant ideal triangulation
of $\RR^2-\ZZ^2$,
and this descends to an ideal triangulation of $\PConway$.
The $1$-skeleton of this ideal triangulation
consists of three pairs of edges
corresponding to the three vertices $\{s_0,s_1,s_2\}$
of $\sigma$.
In the remainder of this paper,
we abuse notation and use the symbol $\trg(\sigma)$
to denote this ideal triangulation of $\PConway$.

If $\sigma$ and $\sigma'$ are adjacent Farey triangles,
then $\trg(\sigma')$ is obtained from $\trg(\sigma)$ by
a ``diagonal exchange'',
i.e., deleting a pair of edges of slope $s$ and adding a pair of edges of slope $s'$,
where $s$ ($s'$, respectively) is the vertex of $\sigma$
($\sigma'$, respectively) which is not
contained in $\sigma'$ ($\sigma$, respectively).
As illustrated in Figure ~\ref{fig.diagonal_exchange},
$\trg(\sigma)$ and $\trg(\sigma')$ can be regarded as the bottom and top faces
of an immersed pair of topological ideal tetrahedra in $\PConway\times \RR$,
which we denote by $\trg(\sigma, \sigma')$.

\begin{figure}[h]
\begin{center}
\includegraphics{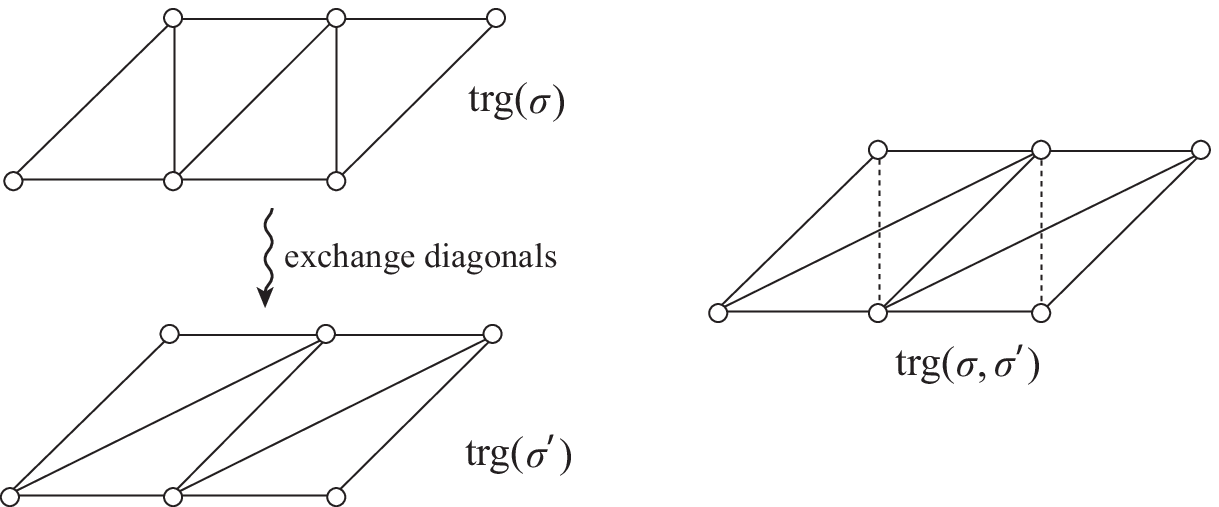}
\end{center}
\caption{\label{fig.diagonal_exchange}
A diagonal exchange of an ideal triangulation of $\PConway$
determines an immersed pair
of ideal tetrahedra in $\PConway\times\RR$.}
\end{figure}

The immersed topological pairs of ideal tetrahedra
$\{\trg(\sigma_i,\sigma_{i+1})\}_{1\le i\le c-1}$
can be stacked up to form a topological ideal triangulation, $\hat\DD(r)$, of
$\PConway\times [-1,1]$.
The restriction of $\hat\DD(r)$ to $\PConway\times \{-1\}$
($\PConway\times \{+1\}$, respectively)
is $\trg(\sigma_1)$ ($\trg(\sigma_c)$, respectively),
and each $\trg(\sigma_i)$ can be regarded as (a triangulation of)
a pleated surface in $\PConway\times [-1,1]$.
Let $\DD(r)$ be the topological ideal simplicial complex obtained from
$\hat\DD(r)$
by collapsing each edge of slope $\infty$ and $r$ into an ideal vertex.
To be precise, $\DD(r)$ is constructed as follows.
Since each edge of slope $\infty$ is collapsed into an ideal vertex,
the subcomplex $\trg(\sigma_1)$ of $\hat\DD(r)$
is collapsed into a single ideal edge, and
$\trg(\sigma_2)$ is folded along the pair of edges of slope $1/2$
to a pair ideal triangles
as illustrated in Figure ~\ref{fig.collapse}.
(Note that the slope $1/2$ is
the vertex of $\sigma_2$ which is not contained in $\sigma_1$.)
Similarly, since each edge of slope $r$ is collapsed into an ideal vertex,
the subcomplex $\trg(\sigma_c)$ of $\hat\DD(r)$
is collapsed into a single ideal edge, and
$\trg(\sigma_{c-1})$ is folded along the pair of edges of slope
$[a_1,\dots,a_{n}-2]$
into a pair of ideal triangles.
(Note that the slope $[a_1,\dots,a_{n}-2]$ is
the vertex of $\sigma_{c-1}$ which is not contained in $\sigma_c$.)
In other words, $\DD(r)$ is obtained from the subcomplex
$\hat\DD_0(r):=\{\trg(\sigma_i,\sigma_{i+1})\}_{2\le i\le c-2}$ of $\hat\DD(r)$
by folding the bottom surface $\trg(\sigma_2)$
in the pair of edges of slope $1/2$
and by folding the top surface $\trg(\sigma_{c-1})$
in the pair of edges of slope $[a_1,\dots,a_{n}-2]$,
as described in \cite[p.408]{Sakuma-Weeks}.
Hence $\DD(r)$ gives a topological ideal triangulation of $S^3-K(r)$
by \cite[Theorem ~II.2.4]{Sakuma-Weeks}.

\begin{figure}[h]
\begin{center}
\includegraphics{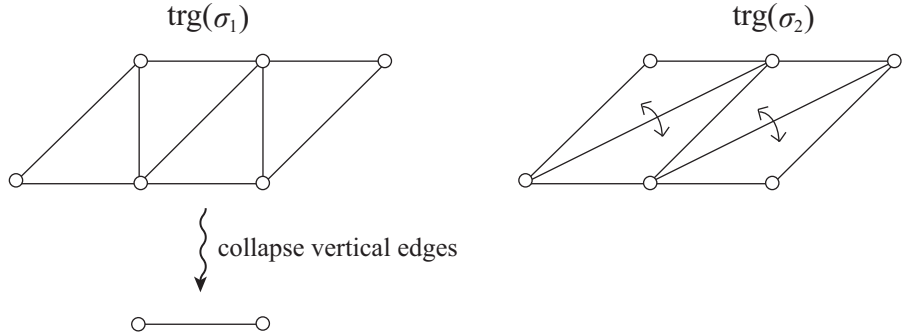}
\end{center}
\caption{\label{fig.collapse}
The effect of the collapsing of the edges of slope $\infty$
in the ideal triangulation $\hat\DD(r)$ of $\PConway\times [-1,1]$:
The subcomplex $\trg(\sigma_1)$ is collapsed into a single ideal edge,
and the subcomplex $\trg(\sigma_2)$ is folded along the edges of slope $1/2$
into a pair of ideal triangles.
}
\end{figure}

It should be noted that
the edges of $\trg(\sigma_2)$ of slopes $0/1$ and $1/1$
are identified into a single edge in $\DD(r)$,
which forms the ``core tunnel''
of the rational tangle $(B^3,t(\infty))$ in $(S^3,K(r))$
(see \cite[Figure ~II.2.7]{Sakuma-Weeks}
and \cite[Figure ~17]{Gueritaud}).
Similarly, the edges of $\trg(\sigma_{c-1})$ of slopes $[a_1,\dots,a_{n-1}]$ and
$[a_1,\dots,a_{n}-1]$
are identified into a single edge in $\DD(r)$,
which forms the core tunnel of the rational tangle $(B^3,t(r))$
in $(S^3,K(r))$.

Now we describe the triangulation of the peripheral torus of $S^3-K(r)$
induced by $\DD(r)$.
To this end, we identify the underlying space
of the subcomplex $\hat\DD_0(r)$ of $\hat\DD(r)$ with $\PConway\times [-1,1]$,
and we first describe the triangulation of the peripheral annuli
of $\PConway\times [-1,1]$ induced by $\hat\DD_0(r)$.
Since the combinatorics of the four peripheral annuli are identical,
let us focus on a single peripheral annulus, $A$.
Since $\trg(\sigma_i)$ is an ideal triangulation of a level $4$-punctured sphere,
it induces a triangulation, $C(\sigma_i)$, of a core circle in $A$.
The triangulation $C(\sigma_i)$ consists of $3$ vertices and $3$ edges.
By recalling the definition of the slopes of elliptic generators,
we may identify $C(\sigma_i)$
with the quotient complex $\ABL{\sigma_i}/\langle D\rangle$
(see Definition ~\ref{def.LL2}(3)).
The region in $A$ bounded by $C(\sigma_i)$ and $C(\sigma_{i+1})$
consists of $2$ triangles
as illustrated in Figure ~\ref{fig.local_triangulation},
and the triangulation of the region can be identified with
$\ABL{\sigma_i,\sigma_{i+1}}/\langle D\rangle$
(compare Figure ~\ref{fig.local_triangulation} with Figure ~\ref{fig.3b}).
The family $\{C(\sigma_i)\}_{2\le i\le c-1}$ forms the $1$-skeleton of the triangulation of $A$, which
is identified with  $\ABL{\Sigma_0(r)}/\langle D\rangle$,
where $\Sigma_0(r):=(\sigma_2,\dots,\sigma_{c-1})$.

\begin{figure}[h]
\begin{center}
\includegraphics{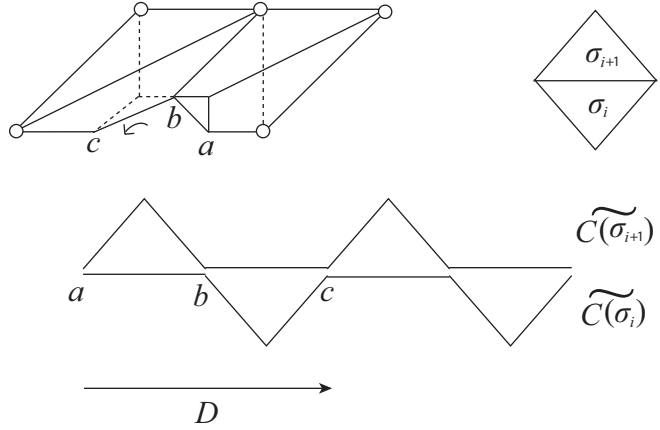}
\end{center}
\caption{\label{fig.local_triangulation}
Local picture of the triangulation of the peripheral annuli of
$\PConway\times [-1,1]$.
In the infinite cyclic cover of the peripheral annulus $A$,
the inverse images of $C(\sigma_i)$ and $C(\sigma_{i+1})$
form periodic zigzag lines, and the triangles bounded by them
project to the two triangles in $A$.
}
\end{figure}

Next, we explain the effect, to the triangulation $\ABL{\Sigma_0(r)}/\langle D\rangle$ of
the peripheral annulus $A$,
of the folding of the pleated surfaces $\trg(\sigma_2)$ and $\trg(\sigma_{c-1})$.
To this end, let $\{P_j\}$ be the sequence of elliptic generators
associated with $\sigma_2=\langle 0/1,1/2,1/1\rangle$
such that $(s(P_0),s(P_1),s(P_2))=(0/1,1/2,1/1)$.
Since the edges of slope $0/1$ and $1/1$ in $\hat\DD(r)$
are identified into a single edges by the folding of $\trg(\sigma_2)$
along the edges of slope $1/2$,
the vertices $[P_0]$ and $[P_2]$ of $\ABL{\Sigma_0(r)}/\langle D\rangle$
are identified.
In the infinite cyclic cover $\ABL{\Sigma_0(r)}$,
the boundary line $\ABL{\sigma_1}$
is deformed into a zigzag line which has a ``hairpin curve''
at the vertices $P_{1+3k}$,
where the vertices $P_{3k}$ and $P_{2+3k}$ are identified into a single vertex
for each $k\in\ZZ$
(see Figure \ref{fig.folding_cusp_triangulation}).
Furthermore, since the folding joins the punctures of $\PConway\times [-1,1]$,
the resulting triangulation of the peripheral annulus $A$ is joined to
the corresponding triangulation of another peripheral annuls
as illustrated in Figure ~\ref{fig.folding_cusp_triangulation}
(see \cite[Figure ~19]{Gueritaud}).
Similarly, the folding of the pleated surface $\trg(\sigma_{c-1})$
cause a similar effect on the other side of $A$.

\begin{figure}[h]
\begin{center}
\includegraphics{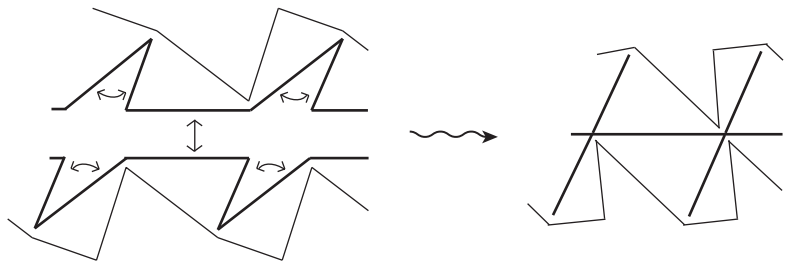}
\end{center}
\caption{\label{fig.folding_cusp_triangulation}
The effect of the folding in the cusp triangulation,
viewed in the infinite cyclic cover.
}
\end{figure}

In \cite{Gueritaud}, Futer applied Gueritaud's technique
based on angled structures to proved that
the topological ideal triangulation $\DD(r)$ is geometric, namely,
$\DD(r)$ is homeomorphic to
a geometric ideal triangulation of the complete hyperbolic manifold $S^3-K(r)$.
(Moreover, Gueritaud ~\cite{Gueritaud2} proved that
$\DD(r)$ is homeomorphic to the canonical decomposition of $S^3-K(r)$
in the sense of \cite{Epstein-Penner, Weeks},
proving the conjecture in \cite{Sakuma-Weeks}.
In the second author's joint work
with Akiyoshi, Wada and Yamashita ~\cite{ASWY},
an approach using cone manifold deformation
toward the same conclusion is announced.)
The geometric ideal triangulation $\DD(r)$ induces a geometric triangulation
of each cusp torus of $S^3-K(r)$.
Since $\DD(r)$ is preserved by the $\ZZ/2\ZZ\oplus \ZZ/2\ZZ$-action
described in Section ~\ref{Statement} (see Figure ~\ref{fig.knot_symmetry}),
this triangulation does not depend on a choice of a cusp,
and we call it {\it the triangulation of the cusp of $S^3-K(r)$
induced by the geometric ideal triangulation $\DD(r)$},
or simply {\it the cusp triangulation induced by $\DD(r)$}.
The cusp triangulation is preserved by the $\ZZ/2\ZZ\oplus\ZZ/2\ZZ$-action on $S^3-K(r)$,
and so it induces a ``triangulation'' of the cusp of the quotient orbifold $\OO(r)$.
We also call it {\it the triangulation of the cusp of $\OO(r)$
induced by the geometric ideal triangulation $\DD(r)$},
or simply {\it the cusp triangulation induced by $\DD(r)$}.
The following proposition describes the geometric structure of the cusp triangulation.
(See Figure ~\ref{fig.actual_cusp_triangulation}
for the actual geometric picture of the cusp triangulation,
which is produced by SnapPea.
See also Figure ~\ref{fig.actual_cusp_triangulation2}
which illustrates the periodic zigzag lines $\RBL{\rho_r}{\sigma_i}$
in the proposition.)

\begin{figure}[h]
\begin{center}
\includegraphics{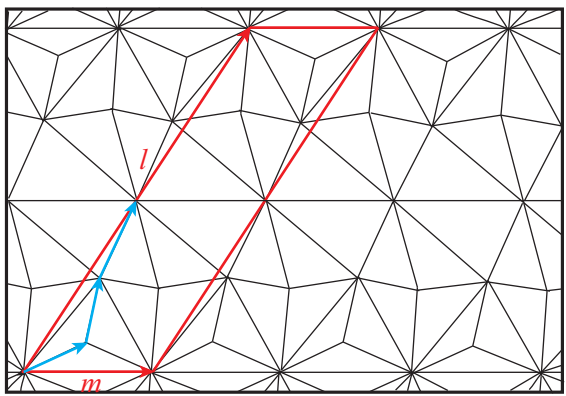}
\end{center}
\caption{\label{fig.actual_cusp_triangulation}
The actual cusp triangulation of $S^3-K([3,2,2])$.
The oriented zigzag line segment represent that
obtained by joining the points
$c(\rho_r(P_0)), c(\rho_r(P_1)), \dots, c(\rho_r(P_d))$
in the proof of Proposition ~\ref{prop.cusp-shape}.
}
\end{figure}

\begin{figure}[h]
\begin{center}
\includegraphics{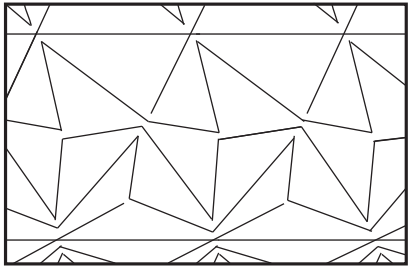}
\end{center}
\caption{\label{fig.actual_cusp_triangulation2}
The periodic zigzag lines
$\{\RBL{\rho_r}{\sigma_i}\}$
in the cusp triangulation.}
\end{figure}

\begin{proposition}
\label{geometric_shape_of_cusp}
Let $K(r)$ be a hyperbolic $2$-bridge link, and let
$\rho_r:\pi_1(\OO)\to PSL(2,\CC)$ be a type-preserving representation
induced by the holonomy representation of the complete hyperbolic structure of $S^3-K(r)$
satisfying the normalization condition \eqref{tp-normalization}.
Then the following hold.

{\rm (1)}
We have $\phi_r^{-1}(0)\cap \Sigma(r)^{(0)}=\{\infty,r\}$, where
$\Sigma(r)^{(0)}$ is the vertex set of $\Sigma(r)$ and
$\phi_r$ is the Markoff map induced by a lift of the restriction of
$\rho_r$ to $\pi_1(\ptorus)$ to an $SL(2,\CC)$-representation.

{\rm (2)}
The zigzag line $\RBL{\rho_r}{\sigma_2}$ is simply folded at the slope $1/2$;
$\RBL{\rho_r}{\sigma_{c-2}}$ is simply folded
at the slope $[a_1,a_2,\dots,a_{n}-2]$.

{\rm (3)}
Let $L_-$ and $L_+$ be the horizontal lines
determined by the simply folded zigzag lines
$\RBL{\rho_r}{\sigma_2}$ and $\RBL{\rho_r}{\sigma_{c-2}}$, respectively.
Then $\RBL{\rho_r}{\Sigma_0(r)}$ forms a $1$-skeleton of a triangulation
of the strip, $\tilde A$, in $\CC$ bounded by $L_-$ and $L_+$.
This triangulation descends to
the triangulation of the cusp of $\OO(r)$ induced by $\DD(r)$.
To be precise, the following hold.
\begin{enumerate}[\indent \rm (i)]
\item Let $P_-$ and $P_+$ be elliptic generators of slope $\infty$ and $r$,
respectively. Then $\rho_r(P_-)$ {\rm (}$\rho_r(P_+)$, respectively{\rm )} acts on $\CC$ as
the $\pi$-rotation about the center of an edge of $\RBL{\rho_r}{\sigma_2}$
{\rm (}$\RBL{\rho_r}{\sigma_{c-2}}$, respectively{\rm )} contained in $L_-$
{\rm (}$L_+$, respectively{\rm )}.
In particular, $\tilde A$ forms a fundamental domain
of the action on $\CC$ of the infinite dihedral group
generated by $\rho_r(P_-)$ and $\rho_r(P_+)$.

\item The orbifold fundamental group, $\pi_1(\partial \OO(r))$, of the cusp of $\OO(r)$
is identified with the group $\langle \rho_r(D), \rho_r(P_-), \rho_r(P_+)\rangle$.
Moreover, $\rho_r(D)$ corresponds to a meridian of $K(r)$,
whereas $\rho_r((P_+P_-)^2)$ or $\rho_r(P_+P_-)$ corresponds to
a longitude of $K(r)$
according to whether $K(r)$ has one or two components.

\item The images of the triangulation $\RBL{\rho_r}{\Sigma_0(r)}$ of $\tilde A$
by the infinite dihedral group $\langle \rho_r(P_-), \rho_r(P_+)\rangle$
form a $\pi_1(\partial \OO(r))$-invariant triangulation of $\CC$,
which project to the triangulation of the cusp of $\OO(r)$ induced by $\DD(r)$.
\end{enumerate}
\end{proposition}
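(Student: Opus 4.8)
The plan is to prove the three assertions in turn, the only serious external input being the fact (recalled above) that $\DD(r)$ is a \emph{geometric} ideal triangulation of $S^3-K(r)$. For assertion (1), I would start from Lemma~\ref{2-bridge-markoff}: since $\rho_r$ is by construction obtained from the holonomy of the complete structure on $S^3-K(r)$, its restriction to $\pi_1(\PConway)$ descends to $\pi_1(S^3-K(r))$, hence $\phi_r(\infty)=\phi_r(r)=0$ and $\{\infty,r\}\subseteq\phi_r^{-1}(0)\cap\Sigma(r)^{(0)}$. For the reverse inclusion I would combine two observations. First, for any $s\in\QQQ$ one has $\phi_r(s)=0$ iff $\mathrm{tr}\,\tilde\rho_r(\beta_s)=0$ iff $\rho_r(\beta_s)^2=1$ in $PSL(2,\CC)$ (Cayley--Hamilton) iff, using irreducibility of $\rho_r$ (which forces $\rho_r(\beta_s)\ne 1$), the loop $\alpha_s=\beta_s^2$ is null-homotopic in $S^3-K(r)$. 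Second, every vertex of $\Sigma(r)$ lies in $\closure{R}\cap\QQQ=I_1(r)\cup I_2(r)\cup\{\infty,r\}$, because each $\sigma_i$ meets the geodesic $\langle\infty,r\rangle\subset R$ and, being a Farey triangle, cannot cross any of the four Farey edges bounding $R$, so $\sigma_i\subset\closure{R}$. Combining these, a vertex $s$ of $\Sigma(r)$ with $s\notin\{\infty,r\}$ lies in $I_1(r)\cup I_2(r)$, so $\alpha_s$ is not null-homotopic by Theorem~\ref{previous_results}(3), whence $\phi_r(s)\ne 0$.

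For assertion (2), I would note that the edge of $\sigma_2$ not containing the vertex $1/2$ is $\langle 0/1,1/1\rangle$ and the triangle on its other side is $\sigma_1=\langle\infty,0/1,1/1\rangle$; thus the vertex of $\DD$ opposite to $1/2$ across that edge is $\infty$, for which $\phi_r(\infty)=0$ by (1). Feeding this into Lemma~\ref{lem.trace}(2-iv) (with $1/2$ placed in the $s_0$-slot) yields both that $\phi_r$ is non-zero on all vertices of $\sigma_2$, so $\RBL{\rho_r}{\sigma_2}$ is defined, and that $c(\rho_r(P_{-1}))=c(\rho_r(P_1))$ for the associated sequence of elliptic generators, i.e.\ $\RBL{\rho_r}{\sigma_2}$ has a fold at its slope-$1/2$ vertex. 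The same reasoning near the other end of the chain, with $\infty$ replaced by $r$ (and $\phi_r(r)=0$ by (1)), produces the fold at the slope $[a_1,\dots,a_n-2]$. It then remains to see these folds are \emph{simple}, i.e.\ the fold tip is off the horizontal line through the coincident feet; this is the step where I would invoke geometricity of $\DD(r)$, since a degenerate fold would flatten the two cusp triangles meeting it and contradict the fact that $\DD(r)$ induces an honest Euclidean cusp triangulation. (Equivalently, non-degeneracy reduces to the non-realness of a suitable product of $\phi_r$-values, which again reflects discreteness.)

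For assertion (3), the idea is to transcribe into hyperbolic geometry the purely combinatorial description of the cusp triangulation given in Section~\ref{sec:canonical}, namely $\ABL{\Sigma_0(r)}/\langle D\rangle$ with the two foldings inserted along the slopes $1/2$ and $[a_1,\dots,a_n-2]$. By (1), $\phi_r$ is non-zero on $\Sigma_0(r)^{(0)}$, so for $2\le i\le c-1$ the circle $\ABL{\sigma_i}/\langle D\rangle$ develops under $\rho_r$ to the periodic zigzag line $\RBL{\rho_r}{\sigma_i}$, and the whole $1$-skeleton to $\RBL{\rho_r}{\Sigma_0(r)}$; by (2) the two foldings become the simple folds, determining the horizontal lines $L_-,L_+$, so the developed $1$-skeleton lies in the strip $\tilde A$ between them. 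Geometricity of $\DD(r)$ guarantees that the developing map carries the combinatorial triangulation faithfully, which gives the first part of (3) and part (iii) once one also develops $\tilde A$ around by the relevant dihedral group. For (i), I would apply Lemma~\ref{lem.trace}(2-iii) to $\sigma_1$ (with $\phi_r(\infty)=0$) and to $\sigma_c$ (with $\phi_r(r)=0$): an elliptic generator $P_-$ of slope $\infty$ (resp.\ $P_+$ of slope $r$) then acts as the $\pi$-rotation whose vertical axis has its finite endpoint at the midpoint of an edge of $\RBL{\rho_r}{\sigma_2}$ (resp.\ of the zigzag line at the $r$-end) lying on $L_-$ (resp.\ $L_+$), and two such $\pi$-rotations about vertical axes with endpoints on the parallel lines $L_\mp$ generate an infinite dihedral group with fundamental domain $\tilde A$. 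For (ii), the cusp of $\OO(r)$ is the Euclidean $(2,2,2,2)$-orbifold, so $\pi_1(\partial\OO(r))$ is generated by $\pi$-rotations; one checks directly that it is realised by $\langle\rho_r(D),\rho_r(P_-),\rho_r(P_+)\rangle$, with $\rho_r(D):z\mapsto z+1$ the meridian and $\rho_r(P_+P_-)$ — a translation transverse to $L_\pm$ — equal to a longitude or to half a longitude according as $K(r)$ has two or one components, the precise normalisation being the one singled out in Section~\ref{Statement} and made explicit in Section~\ref{sec:longitude}.

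The hard part, I expect, will be the faithfulness of this combinatorial-to-geometric dictionary in (3) — equivalently the non-degeneracy of the simple folds in (2) — because that is precisely the place where one must exploit that $\DD(r)$ is \emph{geometric} rather than merely topological; the remaining work is bookkeeping with elliptic generators via Lemma~\ref{lem.trace} and with the Farey combinatorics of the chain $\Sigma(r)$.
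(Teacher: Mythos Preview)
Your proposal is correct and, for parts (2) and (3), follows essentially the same route as the paper: the fold comes from Lemma~\ref{lem.trace}(2-iv) applied with $\phi_r(\infty)=0$ (respectively $\phi_r(r)=0$), the $\pi$-rotation description of $\rho_r(P_\pm)$ from Lemma~\ref{lem.trace}(2-iii), and the global embeddedness/non-degeneracy from the geometricity of $\DD(r)$.

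The one substantive difference is in part~(1). To show $\phi_r(s)\ne 0$ for $s\in\Sigma_0(r)^{(0)}$, you argue that $\phi_r(s)=0$ forces $\alpha_s$ to be null-homotopic in $S^3-K(r)$ (via Cayley--Hamilton and faithfulness of the holonomy), then observe that the vertices of $\Sigma(r)$ lie in $I_1(r)\cup I_2(r)\cup\{\infty,r\}$ and invoke Theorem~\ref{previous_results}(3). The paper instead argues directly from geometricity: since the arc of slope $s$ is realized as a genuine geodesic edge of $\DD(r)$ emanating from the cusp at $\infty$, its other endpoint $\rho_r(P)(\infty)$ lies in $\CC$, whereas Lemma~\ref{lem.trace}(1)(ii) says that $\phi_r(s)=0$ would make $\rho_r(P)$ a $\pi$-rotation about a vertical geodesic and hence fix $\infty$. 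Your argument is perfectly valid, but it imports the main theorem of \cite{lee_sakuma} into a place where it is not needed; the paper's argument uses only the geometricity of $\DD(r)$, which is already indispensable for (2) and (3), so nothing extra is assumed. On the other hand, your route makes the logical equivalence $\phi_r(s)=0\Leftrightarrow\alpha_s$ null-homotopic completely explicit, which is conceptually pleasant and foreshadows the use of Theorem~\ref{prop:conjugacy} in the proof of Key Lemma~\ref{key-lemma}.
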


\begin{proof}
(1) By Lemma ~\ref{2-bridge-markoff}, we have $\phi_r(\infty)=\phi_r(r)=0$.
Let $s$ be a vertex of $\Sigma_0(r)$.
Since simple arcs of slope $s$ in $\PConway$
is realized as a geodesic edge in the geometric triangulation
$\DD(r)$ of the hyperbolic manifold $S^3-K(r)$,
it follows that if $P$ is an elliptic generator of slope $s$
then $\rho_r(P)(\infty)\ne \infty$.
By Lemma ~\ref{lem.trace}(2), this implies that $\phi(s)\ne 0$.
Hence we have $\phi_r^{-1}(0)\cap \Sigma(r)^{(0)}=\{\infty,r\}$.

(2) This follows from the fact that $\phi_r(\infty)=\phi_r(r)=0$ and Lemma ~\ref{lem.trace}(2-iv)
(cf. Remark ~\ref{remark:simply-folded}).

(3) By the preceding description of the combinatorial structure of the cusp triangulation
and the fact that $\DD(r)$ is geometric,
we see that
$\RBL{\rho_r}{\Sigma_0(r)}$ froms a $1$-skeleton of a triangulation
of the strip, $\tilde A$, in $\CC$ bounded by $L_-$ and $L_+$.
Since $\phi_r(\infty)=\phi_r(r)=0$,
we see by Lemma ~\ref{lem.trace}(2-iii)
that $\rho_r(P_-)$ ($\rho_r(P_+)$, respectively) acts on $\CC$ as
the $\pi$-rotation about the center of an edge of
$\RBL{\rho_r}{\sigma_2}$ ($\RBL{\rho_r}{\sigma_{c-1}}$, respectively)
contained in $L_-$ ($L_+$, respectively).
It is obvious that $\tilde A$
forms a fundamental domain of the infinite dihedral group
$\langle \rho_r(P_-), \rho_r(P_+)\rangle$, and so we have (i).
Since $\tilde A$ projects to one of the four peripheral annuli of
$\PConway\times [-1,1]$
and since the $\ZZ/2\ZZ\oplus\ZZ/2\ZZ$-action on $S^3-K(r)$
lifts to a $\ZZ/2\ZZ\oplus\ZZ/2\ZZ$-action on $\PConway\times [-1,1]$
which acts transitively on the set of the four peripheral annuli,
we see that $\tilde A$ is a fundamental domain of the action of
$\pi_1(\partial\OO(r))$ modulo the action of the meridian $\rho_r(D)$.
Since $\tilde A$
is a fundamental domain of the infinite dihedral group
$\langle \rho_r(P_-), \rho_r(P_+)\rangle$,
we have
$\pi_1(\partial \OO(r))=\langle \rho_r(D), \rho_r(P_-), \rho_r(P_+)\rangle$.
Thus we obtain the first assertion of (ii).
The remaining assertion of (ii) follows from the description of
the $\ZZ/2\ZZ\oplus\ZZ/2\ZZ$-action on $K(r)$ given at the end of Section ~\ref{Statement}.
Assertion (iii) follows from (i), (ii),
and the description of the combinatorial structure of the cusp triangulation
together with the fact that $\DD(r)$ is geometric.
\end{proof}

\section{Proof of Theorems ~\ref{MainTheorem1} and \ref{MainTheorem2}}
\label{sec:proof}

Throughout this section and in the remainder of this paper,
$K(r)$ denotes a hyperbolic link,
$\rho_r:\pi_1(\OO)\to PSL(2,\CC)$ denotes
the type-preserving representation induced by the holonomy
representation of the complete hyperbolic structure of $S^3-K(r)$,
$\phi_r$ denotes a Markoff map determined by a lift
$\tilde\rho_r:\pi_1(\ptorus)\to SL(2,\CC)$ of the restriction of $\rho_r$
to $\pi_1(\ptorus)$,
and $\psi_r$ denotes the complex probability map
determined by $\rho_r$.

Let $\TT_0(r)$ be the subtree of $\TT$ dual to the chain $\Sigma_0(r)$,
and let $\EE(r)$ be the set of the oriented edges of $\TT-\TT_0(r)$
whose head is contained in  $\TT_0(r)$.
For each interval $I_j(r)$ ($j=1,2$),
we consider the following set of oriented edges:
\[
\EE_j(r)=\{\ee\in \EE(r) \svert \Omega^{0-}(\ee)\subset I_j(r)\}.
\]
It should be noted that
$\EE(r)=\EE_1(r)\sqcup\EE_2(r)\sqcup\{\ee_-,\ee_+\}$,
where $\ee_-$ and $\ee_+$ are the element of $\EE(r)$
with tails dual to $\sigma_1$ and $\sigma_c$, respectively
(see Figure ~\ref{fig.dual_oriented_edges}).

\begin{figure}[h]
\begin{center}
\includegraphics{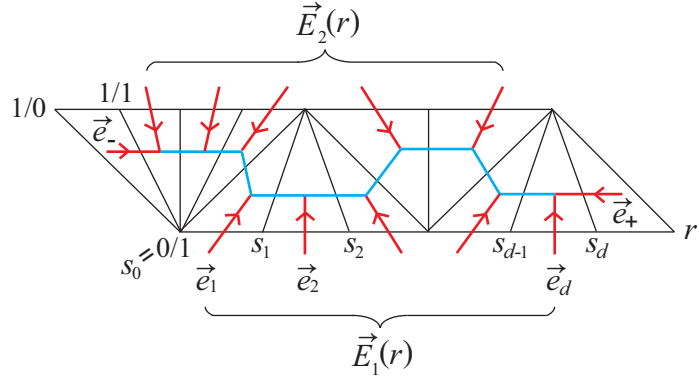}
\end{center}
\caption{\label{fig.dual_oriented_edges}
Dual oriented edges.}
\end{figure}

\begin{proposition}
\label{prop.cusp-shape}
{\rm (1)} The following identity holds:
\[
\sum_{\ee\in \EE_1(r)}\psi_r(\ee)+\sum_{\ee\in \EE_2(r)}\psi_r(\ee)=-1.
\]

{\rm (2)} The cusp shape $\lambda(\OO(r))$ with respect to a suitable choice of
a longitude is given by the following formula:
\[
\frac{1}{2}\lambda(\OO(r))=
\sum_{\ee\in \EE_1(r)}\psi_r(\ee)=
-1-\sum_{\ee\in \EE_2(r)}\psi_r(\ee).
\]
\end{proposition}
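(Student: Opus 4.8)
The plan is to compute the cusp shape $\lambda(\OO(r))$ directly from the geometric picture of Proposition~\ref{geometric_shape_of_cusp}, and then translate the geometry into the language of the complex probability map $\psi_r$ via \eqref{a=x/yz}. By Proposition~\ref{geometric_shape_of_cusp}(3)(ii), the meridian is $\rho_r(D):z\mapsto z+1$ and a longitude is $\rho_r((P_+P_-)^2)$ or $\rho_r(P_+P_-)$ according to whether $K(r)$ has one or two components; in either case the longitude translation is by $\frac{|K(r)|}{2}$ times the vector pointing from a vertex of $\RBL{\rho_r}{\sigma_2}$ on the line $L_-$ to the ``mirror'' vertex on $L_+$ (since $\rho_r(P_-)$ and $\rho_r(P_+)$ are the two $\pi$-rotations about edge-centers on $L_-$ and $L_+$, their product is a translation by twice the vector between those edge-centers, and $\tilde A$ is a fundamental domain for the generated infinite dihedral group). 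Thus, up to the normalization built into the definition of $\lambda(\OO(r))$, the quantity $\tfrac12\lambda(\OO(r))$ equals the signed horizontal-plus-vertical displacement across the strip $\tilde A$ — more precisely, it equals the sum of the edge-vectors $c(\rho_r(P_{j+1}))-c(\rho_r(P_j))$ as one traverses a path in $\RBL{\rho_r}{\Sigma_0(r)}$ connecting a chosen vertex on $L_-$ to the corresponding vertex on $L_+$.

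Next I would identify that telescoping sum of edge-vectors with the sum $\sum_{\ee\in\EE_1(r)}\psi_r(\ee)$. The key point is that each oriented edge $\ee\in\EE(r)$ is dual to a Farey edge $\langle s_1,s_2\rangle$ crossed when passing from a triangle outside $\Sigma_0(r)$ into $\Sigma_0(r)$; by \eqref{a=x/yz}, $\psi_r(\ee)=c(\rho_r(P_2))-c(\rho_r(P_1))$ is exactly one of the edge-vectors of a zigzag line $\RBL{\rho_r}{\sigma_i}$. The combinatorial bookkeeping — which is where the main work lies — is to check that the edges of $\EE_1(r)$ are in bijection with the edge-vectors one accumulates along the ``upper'' boundary path of the triangulated strip $\tilde A$ (the path lying over the interval $I_1(r)$), so that $\sum_{\ee\in\EE_1(r)}\psi_r(\ee)$ literally telescopes to $\tfrac12\lambda(\OO(r))$. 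This uses the description of the cusp triangulation as $\RBL{\rho_r}{\Sigma_0(r)}$ together with the effect of the two foldings (at slopes $1/2$ and $[a_1,\dots,a_n-2]$) described in Section~\ref{sec:canonical}; the foldings are precisely what make the boundary of $\tilde A$ the horizontal lines $L_-$, $L_+$ and what pair up the two ``sides'' of the strip.

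For part~(1), the identity $\sum_{\ee\in\EE_1(r)}\psi_r(\ee)+\sum_{\ee\in\EE_2(r)}\psi_r(\ee)=-1$ should follow from the same telescoping applied to the \emph{full} set $\EE(r)=\EE_1(r)\sqcup\EE_2(r)\sqcup\{\ee_-,\ee_+\}$: traversing all of $\RBL{\rho_r}{\Sigma_0(r)}$ around one period returns to the start shifted by the meridian $z\mapsto z+1$, so the edge-vectors over a full period sum to a vector of horizontal component governed by $\rho_r(D)$; subtracting off the two ``extremal'' contributions $\psi_r(\ee_\pm)$ coming from the collapsed triangles $\sigma_1$, $\sigma_c$ (these are the half-twists along $L_-$ and $L_+$, contributing $\pm\tfrac12$ each to close up the period, hence $-1$ in total after accounting for orientations) leaves $-1$ for the sum over $\EE_1(r)\cup\EE_2(r)$. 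Equivalently, one can first establish (1) by this period argument and then get the second equality in (2), $\sum_{\ee\in\EE_1(r)}\psi_r(\ee)=-1-\sum_{\ee\in\EE_2(r)}\psi_r(\ee)$, as a formal consequence. I expect the main obstacle to be pinning down the precise sign conventions and the exact matching between $\EE_j(r)$ and the geometric edges — in particular verifying that the vertex of $\sigma_2$ not in $\sigma_1$ has slope $1/2$, that the folding identifies the correct pair of vertices, and that the orientations of edges in $\EE(r)$ (heads toward $\TT_0(r)$) produce the stated signs rather than their negatives. Once the dictionary is fixed, both identities are telescoping sums.
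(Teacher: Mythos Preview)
Your approach to part~(2) is essentially the paper's: one chooses elliptic generators $P_0,\dots,P_d$ so that each consecutive pair corresponds to an edge $\ee_i\in\EE_1(r)$, observes via Proposition~\ref{geometric_shape_of_cusp} that the zigzag segment joining $c(\rho_r(P_0)),\dots,c(\rho_r(P_d))$ projects to a longitude of $\OO(r)$, and then telescopes using \eqref{a=x/yz}. One small correction of language: this zigzag is a \emph{transversal} of the strip $\tilde A$, running from $L_-$ to $L_+$; it is not a boundary path ``along'' one side of the strip. The edges of $\EE_1(r)$ are indexed by the Farey edges $\langle s_{i-1},s_i\rangle$ lying on the $I_1(r)$-side of $\Sigma_0(r)$, and the corresponding zigzag crosses the strip.

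For part~(1) your argument has a concrete error. The paper does not use a geometric period argument; it invokes Bowditch's identity \cite[Lemma~1]{Bowditch1}, which for any finite subtree gives
\[
\sum_{\ee\in\EE(r)}\psi_r(\ee)=1,
\]
and then computes $\psi_r(\ee_-)=\psi_r(\ee_+)=1$ directly. Indeed, $\ee_-$ is dual to the common Farey edge $\langle 0,1\rangle$ of $\sigma_1$ and $\sigma_2$, with head toward $\sigma_2$, so $\psi_r(\ee_-)=\phi_r(1/2)/(\phi_r(0)\phi_r(1))$; but the edge relation $\phi_r(\infty)+\phi_r(1/2)=\phi_r(0)\phi_r(1)$ together with $\phi_r(\infty)=0$ gives $\phi_r(1/2)=\phi_r(0)\phi_r(1)$, whence $\psi_r(\ee_-)=1$. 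The same computation at the other end (using $\phi_r(r)=0$) gives $\psi_r(\ee_+)=1$. Thus
\[
\sum_{\ee\in\EE_1(r)}\psi_r(\ee)+\sum_{\ee\in\EE_2(r)}\psi_r(\ee)=1-1-1=-1.
\]
Your claimed values ``$\pm\tfrac12$ each'' for $\psi_r(\ee_\pm)$ are wrong, and the heuristic about ``half-twists closing up the period'' does not match the actual arithmetic. A period argument \emph{can} be made to work (each zigzag $\RBL{\rho_r}{\sigma_i}$ has period $1$, reflecting that the three outgoing $\psi$-values at a vertex of $\TT$ sum to $1$, which is the base case of Bowditch's lemma), but you would still need the correct values $\psi_r(\ee_\pm)=1$, and at that point you are essentially reproving Bowditch's lemma rather than avoiding it.
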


\begin{proof}
(1) By Proposition ~\ref{geometric_shape_of_cusp}(1),
$\psi_r(\ee)$ is defined for all $\ee\in \EE(r)$.
Hence, we have the following identity by \cite[Lemma ~1]{Bowditch1}:
\[
\sum_{\ee\in \EE(r)}\psi_r(\ee)=1.
\]
On the other hand, since $\phi_r(\infty)=\phi_r(r)=0$,
we have $\psi_r(\ee_-)=\psi_r(\ee_+)=1$.
Hence, by using the fact that $\EE(r)=\EE_1(r)\sqcup\EE_2(r)\sqcup
\{\ee_-,\ee_+\}$,
we obtain the desired identity.

(2) Let $\ee_1,\ee_2,\dots,\ee_d$ be the members of $\EE_1(r)$
whose heads lie in $\TT_0(r)$ in this order,
and let $s_0,s_1,\dots, s_{d}$ be the vertices of $\Sigma_0(r)$
such that $\ee_i$ is dual to the Farey edge $\langle s_{i-1},s_i\rangle$.
Let $P_0, P_1,\dots, P_d$ be elliptic generators satisfying the following conditions.
\begin{enumerate}[\indent \rm (i)]
\item The slope of $P_i$ is $s_i$ for each $i\in\{0,1,\dots,d\}$.

\item For each $i\in\{1,2,\dots,d\}$,
the two elliptic generators  $P_{i-1}$ and $P_{i}$ appear successively in
the sequence of elliptic generators associated with
the Farey triangle in $\Sigma_0(r)$ which contains $\langle s_{i-1},s_i\rangle$.
\end{enumerate}
Then, by the description of the geometric cusp triangulation in
Proposition ~\ref{geometric_shape_of_cusp},
we see that the zigzag line segment in $\CC$
obtained by joining the points
\[
c(\rho_r(P_0)), c(\rho_r(P_1)), \dots, c(\rho_r(P_d))
\]
projects to a longitude of the orbifold $\OO(r)$
(see Figure ~\ref{fig.actual_cusp_triangulation}).
In particular, $c(\rho_r(P_d))-c(\rho_r(P_0))$
is equal to the complex number $\frac{|K(r)|}{4}\lambda$
introduced at the end of Section ~\ref{Statement}.
Hence the modulus of the cusp of $\OO(r)$
with respect to the longitude is given by
\[
\frac{1}{2}\lambda(\OO(r))
=c(\rho_r(P_d))-c(\rho_r(P_0))
=\sum_{i=1}^d \left(c(\rho_r(P_i)))-c(\rho_r(P_{i-1})\right)
=\sum_{i=1}^d\psi(e_i),
\]
where the last equality follows from formula ~\eqref{a=x/yz}
in Section ~\ref{sec:Markoff}.
Thus we have proved the first identity in (2).
The second identity follows from (1).
\end{proof}

By the above proposition,
the proof of Theorems ~\ref{MainTheorem1} and \ref{MainTheorem2}
is reduced to the following key lemma.

\begin{key-lemma}
\label{key-lemma}
Every member $\ee$ of $\EE_1(r)\cup \EE_2(r)$
satisfies the conditions of Proposition ~\ref{key-proposition},
namely, it satisfies the following conditions.
\begin{enumerate}[\indent \rm (1)]
\item The set $\{s\in\Omega^-(\ee)\,|\,\ |\phi_r(s)|\le 2\}$ is
finite.

\item $\Omega^{0-}(\ee)\cap\phi_r^{-1}(-2,2)=\emptyset$.
\end{enumerate}
\end{key-lemma}

\begin{proof}[Proof of Theorems ~\ref{MainTheorem1} and \ref{MainTheorem2}
assuming Key Lemma ~\ref{key-lemma}]
By Proposition ~\ref{key-proposition} and Lemma ~\ref{key-lemma},
we have the following identity for each $j=1,2$
\begin{align*}
\sum_{\ee\in \EE_j(r)}\psi_r(\ee)
&=\sum_{\ee\in \EE_j(r)} \left\{
\sum_{s\in\Omega^0(e)} h(\phi_r(s))
+2\sum_{s\in\Omega^-(\ee)} h(\phi(s))\right\} \\
&=2\sum_{s\in \mathrm{int}I_j(r)}\frac{1}{1+e^{l_{\rho_r}(\beta_s)}}
+\sum_{s\in\partial I_j(r)}\frac{1}{1+e^{l_{\rho_r}(\beta_s)}}.
\end{align*}
By applying this identity to the identities
in Proposition ~\ref{prop.cusp-shape},
we obtain the desired results.
\end{proof}

Key Lemma ~\ref{key-lemma} is proved by using the results
obtained in the series of papers
\cite{lee_sakuma, lee_sakuma_2, lee_sakuma_3, lee_sakuma_4}
(see also the announcement \cite{lee_sakuma_5}),
which gives a complete answer to the following question
concerning the simple loops in 2-bridge sphere $\PConway$
of a $2$-bridge link $K(r)$.
\begin{enumerate}[\indent \rm (1)]
\item Which simple loop on $\PConway$ is null-homotopic or peripheral
on $S^3-K(r)$?

\item For given two simple loops on $\PConway$,
when are they homotopic?
\end{enumerate}
In particular, we have the following theorem.

\begin{theorem}
\label{prop:conjugacy}
For a hyperbolic $2$-bridge link $K(r)$, the following hold.
\begin{enumerate}[\indent \rm (1)]
\item For any rational number $s$ in $I_1(r)\cup I_2(r)$,
$\alpha_s$ is not null-homotopic in $S^3-K(r)$.

\item There are at most two rational numbers $s$ in $I_1(r)\cup I_2(r)$
such that $\alpha_s$ is peripheral.

\item Except for at most two pairs of rational numbers
in $I_1(r)\cup I_2(r)$,
the simple loops $\{\alpha_s \svert s\in I_1(r)\cup I_2(r)\}$
are not mutually homotopic in $S^3-K(r)$.
\end{enumerate}
\end{theorem}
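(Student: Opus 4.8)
The plan is to dispatch assertion (1) from the results already quoted and to reduce assertions (2) and (3) --- the main theorems of the series \cite{lee_sakuma_2, lee_sakuma_3, lee_sakuma_4} --- to a combinatorial classification of the loops $\alpha_s$. Assertion (1) is in fact the final clause of Theorem~\ref{previous_results}(3): by Theorem~\ref{previous_results}(2) each $s \in I_1(r) \cup I_2(r)$ is its own canonical representative and hence lies in the $\RGPP{r}$-orbit of neither $\infty$ nor $r$, so Theorem~\ref{previous_results}(3) says precisely that $\alpha_s$ is not null-homotopic in $S^3 - K(r)$.

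For (2) and (3) the strategy is to convert the topological questions into conjugacy questions in $\pi_1(S^3 - K(r)) = \pi_1(\PConway)/\llangle \alpha_\infty, \alpha_r \rrangle$ and to settle them by a small-cancellation analysis of the representing words. Fixing the free generators of $\pi_1(\PConway)$ coming from $\trg(\sigma_1)$, each $\alpha_s$ (for $s \in I_1(r) \cup I_2(r)$) is represented by an explicit cyclically reduced word $u_s$ whose syllable pattern is read off from the Farey / continued-fraction data of $s$. Then $\alpha_s$ and $\alpha_{s'}$ are homotopic exactly when $u_s$ and $u_{s'}$ are conjugate in the quotient, and $\alpha_s$ is peripheral exactly when $u_s$ is conjugate into the peripheral subgroup. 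The decisive input from \cite{lee_sakuma_2, lee_sakuma_3, lee_sakuma_4} is that the relators $\alpha_\infty, \alpha_r$ together with the family $\{u_s\}$ satisfy small-cancellation-type hypotheses, so that conjugacy is governed by a Dehn-type algorithm and each $u_s$ is essentially rigid: relator-reductions cannot carry $u_s$ to $u_{s'}$ for distinct $s, s'$ except in tightly constrained configurations.

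This rigidity meshes with the orbit picture to yield (3). By Theorem~\ref{previous_results}(2) distinct elements of $I_1(r) \cup I_2(r)$ lie in distinct $\RGPP{r}$-orbits, while by Theorem~\ref{previous_results}(1) loops of the same orbit are homotopic; hence any homotopy between $\alpha_s$ and $\alpha_{s'}$ with $s \ne s'$ in $I_1(r) \cup I_2(r)$ is a genuine ``non-orbit'' coincidence, and the combinatorial analysis shows such coincidences occur for at most two pairs $\{s, s'\}$. These potential exceptions are sporadic and are attached to the two rational tangles forming $K(r)$ --- equivalently to the two core tunnels, of slopes $\{0,1\}$ and $\{r_1, r_2\}$, singled out in Section~\ref{sec:canonical} --- and the same two tunnels account for the at most two peripheral classes of (2). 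I would complete (2) and (3) by the explicit inspection of these endpoint configurations (in particular the exceptional, palindromically symmetric continued-fraction shapes) that the classification isolates.

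The main obstacle is the small-cancellation / Dehn-algorithm rigidity underlying this scheme: showing that the words $u_s$ admit no nontrivial reduction against $\alpha_\infty$ and $\alpha_r$ is exactly the substantial technical work of the cited series. Once that is in hand, the derivation of (1), the translation into a conjugacy problem, and the bookkeeping of the finitely many exceptional pairs are all comparatively routine.
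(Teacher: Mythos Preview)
The paper does not give its own proof of this theorem; it is stated as a summary of the main results of the cited series \cite{lee_sakuma, lee_sakuma_2, lee_sakuma_3, lee_sakuma_4}, and the proof of Key Lemma~\ref{key-lemma} simply invokes it. Your handling of (1) via Theorem~\ref{previous_results}(3) is exactly right, and your outline of (2) and (3) --- translate to conjugacy in $\pi_1(\PConway)/\llangle \alpha_\infty,\alpha_r\rrangle$, represent $\alpha_s$ by explicit cyclic words, and run a small-cancellation analysis to show rigidity up to finitely many exceptions --- is an accurate high-level description of what those papers actually do. You also correctly identify where the real difficulty lies.

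One caution: your identification of the exceptional loci is off. You suggest the at most two peripheral classes and the at most two homotopy coincidences are attached to the endpoint slopes $\{0,1\}$ and $\{r_1,r_2\}$ (equivalently to the two core tunnels). The paper's own later Theorem~\ref{MainTheorem4}, which quotes the classification from \cite{lee_sakuma_3, lee_sakuma_4}, shows otherwise: the peripheral exceptions occur only for the special slopes $r=2/5$, $r=n/(2n+1)$, $r=2/(2n+1)$, and the exceptional $s$ (for instance $1/5$ and $3/5$ when $r=2/5$) lie in the \emph{interiors} of $I_1(r)$ and $I_2(r)$, not at the endpoints. So the heuristic ``exceptions come from the two tangles/tunnels'' does not survive the detailed analysis, and the actual case-by-case inspection in the cited papers is organized rather differently from what you sketch.
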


\begin{proof}[Proof of Key Lemma ~\ref{key-lemma}]
Since $\Omega^{0-}(\ee)\subset I_1(r)\cup I_2(r)$ for any
$\ee\in \EE_1(r)\cup \EE_2(r)$,
the lemma is reduced to the following assertions.
\begin{enumerate}[\indent \rm (i)]
\item
The set $\{s\in I_1(r)\cup I_2(r)\,|\,\ |\phi_r(s)|\le 2\}$ is finite.

\item $(I_1(r)\cup I_2(r))\cap\phi_r^{-1}(-2,2)=\emptyset$.
\end{enumerate}
We first prove (ii).
Let $s$ be a rational number contained in $I_1(r)\cup I_2(r)$.
Then, by Theorem ~\ref{prop:conjugacy}(1),
$\alpha_s$ determines a non-trivial element of $\pi_1(S^3-K(r))$.
Since $\rho_r$ is induced by the holonomy representation
of the complete hyperbolic structure of $S^3-K(r)$,
we see that $\rho_r(\alpha_s)=\rho_r(\beta_s^2)$ is neither trivial nor elliptic.
Thus $\rho_r(\beta_s)$ is not elliptic, and so
$\phi_r(s)=\tr(\tilde\rho_r(\beta_s))$ is not contained in $(-2,2)$.
Hence we obtain (ii).

Next we prove (i).
Suppose on the contrary that
the set $\{s\in I_1(r)\cup I_2(r)\,|\,\ |\phi_r(s)|\le 2\}$ contains
infinitely many elements $\{s_j\}_{j\in\ZZ}$.
By Theorem ~\ref{prop:conjugacy}(1) and (2),
we may assume that $\rho(\alpha_{s_j})$
is neither trivial nor parabolic, and hence,
the simple loop $\alpha_{s_j}$ is homotopic to
a closed geodesic in the hyperbolic manifold $S^3-K(r)$.
By Theorem ~\ref{prop:conjugacy}(3), we may also assume that
$\{\alpha_{s_j}\}$ are not mutually homotopic in $S^3-K(r)$ and so
the corresponding closed geodesics are mutually distinct.
On the other hand, the condition $|\phi(s_j)|\le 2$ implies that
the real length $L(\rho(\alpha_{s_j}))=2L(\rho(\beta_{s_j}))$ is bounded from above.
This contradicts the discreteness of marked length spectrum
of geometrically finite hyperbolic $3$-manifolds
(see Lemma ~\ref{length-specturum} below).
Hence we obtain (i).
This completes the proof of Key Lemma ~\ref{key-lemma}.
\end{proof}

Since we could not find a proof of
Lemma ~\ref{length-specturum} below in a literature,
we give a proof, for completeness, imitating
the argument in \cite[Proof of Theorem ~1 in p.73]{Abikoff},
where we refer to \cite{Matsuzaki-Taniguchi} for terminologies for Kleinian groups.

\begin{lemma}
\label{length-specturum}
Let $M$ be a geometrically finite complete hyperbolic 3-manifold.
Then the marked length spectrum of $M$ is discrete,
namely, for any positive real number $L$,
there are only finitely many closed geodesics in $M$
with length at most $L$.
\end{lemma}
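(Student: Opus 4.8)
The plan is to follow the classical argument showing that a geometrically finite Kleinian group acts cocompactly on the complement of a suitable neighborhood of its limit set, together with the standard dictionary between closed geodesics in $M=\HH^3/G$ and conjugacy classes of loxodromic (or, more generally, non-parabolic, non-elliptic) elements of $G$. Write $M=\HH^3/G$ with $G$ geometrically finite. Fix a positive real number $L$, and suppose for contradiction that there are infinitely many distinct closed geodesics $\gamma_n$ in $M$ with $\mathrm{length}(\gamma_n)\le L$. Each $\gamma_n$ corresponds to a conjugacy class of a loxodromic element $g_n\in G$ with $\Re(l(g_n))\le L$, and distinct geodesics give non-conjugate elements. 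The goal is to derive a contradiction with properness of the $G$-action.

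The key steps, in order, are as follows. First, using the geometric finiteness hypothesis, pass to the thick part: choose $\epsilon>0$ smaller than a Margulis constant and let $M_{[\epsilon,\infty)}$ be the $\epsilon$-thick part of the convex core $C(M)$ of $M$. By geometric finiteness, $C(M)$ has finite volume only in special cases, but in all cases the thick part of the convex core is \emph{compact} (this is one of the standard equivalent characterizations of geometric finiteness; see \cite{Matsuzaki-Taniguchi}). Second, observe that a closed geodesic $\gamma_n$ of length at most $L$ lies in $C(M)$, and I claim it must meet the thick part $M_{[\epsilon,\infty)}$: if $\gamma_n$ were entirely contained in an $\epsilon$-thin component, that component would be a Margulis tube around a short geodesic or a cusp neighborhood, and a closed geodesic inside a Margulis tube around a core geodesic $\delta$ is a power of $\delta$; choosing $\epsilon$ small handles this, since there are only finitely many geodesics of length $\le L$ of this degenerate type (they are powers of the finitely many core geodesics shorter than $\epsilon$, of which there are finitely many because $C(M)$ is geometrically finite, and bounded length forces bounded powers). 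Third, pick a point $p_n\in\gamma_n\cap M_{[\epsilon,\infty)}$. Then $\gamma_n$ is freely homotopic to a loop based at $p_n$ of length $\le L$, so a lift $\tilde p_n\in\HH^3$ of $p_n$ and the element $g_n$ satisfy $d(\tilde p_n, g_n\tilde p_n)\le L$. Fourth, since $M_{[\epsilon,\infty)}\cap C(M)$ is compact, we may choose the lifts $\tilde p_n$ to lie in a fixed compact set $\tilde K\subset\HH^3$. Fifth, the set of elements $g\in G$ with $d(x,gx)\le L$ for $x$ ranging over the compact set $\tilde K$ is finite, by proper discontinuity of the action of the discrete group $G$ on $\HH^3$. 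Hence there are only finitely many possibilities for $g_n$ up to the ambiguity in choosing $\tilde p_n$ within a fixed fundamental-domain-sized set, which bounds the number of conjugacy classes and hence the number of geodesics $\gamma_n$ — contradicting the assumption that there are infinitely many.

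The main obstacle I expect is the second step: ruling out that the geodesics $\gamma_n$ escape entirely into the thin part. Cusps are easy (a closed geodesic cannot enter a horoball cusp neighborhood, since such neighborhoods are geodesically convex and contain no closed geodesic — their fundamental group is parabolic). The genuinely delicate point is Margulis tubes around short geodesics: one must quantify that a closed geodesic lying inside a tube of core geodesic $\delta$ is homotopic to a power $\delta^k$, and that $\mathrm{length}(\delta^k)\le L$ forces $k\le L/\mathrm{length}(\delta)$; combined with the finiteness of the set of short core geodesics in a geometrically finite manifold (which itself uses geometric finiteness), this gives only finitely many exceptional geodesics and they can be discarded. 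Once the thick part compactness is in hand — this is exactly the content of $\cite{Abikoff}$'s Theorem~1 argument being imitated — the rest is the standard proper-discontinuity counting. I would phrase the whole argument so that the only external input is: (a) geometric finiteness $\Rightarrow$ the thick part of the convex core is compact and there are finitely many geodesics shorter than the Margulis constant; and (b) proper discontinuity of $G$ on $\HH^3$.
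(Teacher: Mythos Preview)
Your argument is correct and rests on the same idea as the paper's: closed geodesics lie in the convex core, geometric finiteness makes the convex core compact away from the cusps, and proper discontinuity then bounds the number of short translations. The organization differs, however. The paper fixes a single base point $b\in C(M)$, lets $b_j$ be the nearest point on $\gamma_j$, and uses the triangle inequality $d(\tilde b,\gamma_j(\tilde b))\le 2d(b,b_j)+L$ together with proper discontinuity to force $b_j\to\infty$ in $C(M)$; since $C(M)$ is a compact set union finitely many cusps, the $b_j$ eventually lie deep in a horoball, and precise invariance of that horoball by its parabolic stabilizer yields the contradiction (a loxodromic $\gamma_j$ would move a deep point of the horoball back into the horoball). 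This avoids the thick--thin decomposition entirely: one never needs Margulis tubes, only the decomposition of $C(M)$ into a compact part and cusp neighborhoods. Your detour through Margulis tubes is harmless but unnecessary; you could streamline by replacing ``$\epsilon$-thick part of $C(M)$'' with ``$C(M)$ minus standard cusp neighborhoods'', which is already compact for geometrically finite $M$, and which every closed geodesic meets.

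One small correction: it is not true that a closed geodesic cannot \emph{enter} a horoball cusp neighborhood --- it certainly can dip in and out. What is true (and what your argument actually needs) is that a closed geodesic cannot be \emph{entirely contained} in such a neighborhood, since the stabilizer is parabolic. Your parenthetical reasoning (``their fundamental group is parabolic'') supports only this weaker statement, which is all that is required.
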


\begin{proof}
Suppose on the contrary that there is an infinite sequence $\{\gamma_j\}$
of mutually distinct closed geodesics in
a geometrically finite complete hyperbolic 3-manifold $M=\HH^3/\pi_1(M)$
such that the lengths $L(\gamma_j)$ are bounded above by a positive constant $L$.
Pick a base point $b$ in the convex core, $C(M)$, of $M$
and pick a lift $\tilde b$ of $b$ in the universal cover $\HH^3$.
Let $b_j$ a point in $\gamma_j$
such that $d(b,b_j)=d(b,\gamma_j)$,
where $d$ denotes the hyperbolic distance.
Let $\tilde b_j\in\HH^3$ be the lift of $b_j$ such that $d(\tilde b,\tilde b_j)=d(b,b_j)$,
and let $\tilde\gamma_j$ be the geodesic in $\HH^3$ passing through $\tilde b_j$
which projects to $\gamma_j$.
We abuse notation to continue to denote by $\gamma_j$ the element of
$\pi_1(M)(\subset \Isom(\HH^3))$
represented by the closed geodesic $\gamma_j$ whose axis is $\tilde\gamma_j$.
Then
\[
d(\tilde b, \gamma_j(\tilde b)) \le 2 d(b,b_j) + L(\gamma_j) \le 2 d(b,b_j)+L.
\]
So we have
\[
d(b,b_j)\ge \frac{1}{2}(d(\tilde b, \gamma_j(\tilde b))-L).
\]
Since $\pi_1(M)$ acts discontinuously on $\HH^3$,
we see $d(\tilde b, \gamma_j(\tilde b))\to \infty$, and hence
the sequence $\{b_j\}$ in $C(M)$ diverges.
Since $M$ is geometrically finite,
the convex core $C(M)$ is a union of a compact submanifold and
a finite union of cusp neighborhoods.
So, we may assume, after taking a subsequence,
that $\{b_j\}$ converges to a cusp of $C$.
Let $U$ be a neighborhood of the cusp in $M$,
obtained as the image of a horoball $H$.
Then the stabilizer of $H$ is a parabolic subgroup, $P_U$, of $\pi_1(M)$
and $H$ is precisely invariant by $(\pi_1(M),P_U)$.
Since $\{b_j\}$ converges to the cusp,
we can find, for a sufficiently large $j$,
a lift $\tilde b_j'\in H$ of $b_j$
such that $d(\tilde b_j',\partial H)>L$.
Let $\gamma_j'$ be the element of $\pi_1(M)$ represented by
the closed geodesic $\gamma_j$
whose axis passes through $\tilde b_j'$.
Then $d(\tilde b_j',\gamma_j'(\tilde b_j'))=l(\gamma_j)\le L$, and therefore
the point $\gamma_j'(\tilde b_j')$ is also contained in $H$.
Since $\gamma_j\in\pi_1(M)$ is not parabolic (and therefore
it does not belong to $P_U$),
this contradicts the assumption that $H$ is
precisely invariant by $(\pi_1(M),P_U)$.
\end{proof}

\section{Homological description of the longitude in Theorem ~\ref{MainTheorem1}}
\label{sec:longitude}

In this section, we give an explicit homological description
of the longitude of $K(r)$
in Theorem ~\ref{MainTheorem1}.
To this end, we fix an arbitral orientation of $K(r)$, and
employ the following notation.
\begin{enumerate}[\indent \rm (1)]
\item If $K(r)$ is a knot, then $\ell_0$ and $m$ denote the
preferred longitude and the meridian of $K(r)$,
namely, $\ell_0$ and $m$ are oriented essential simple loops on
a peripheral torus, satisfying the following conditions:
$\ell_0$ is homologous to $K(r)$ in a regular neighborhood,
$N(K(r))$, of $K(r)$ and $\lk(\ell_0,K(r))=0$;
$m$ bounds a disk in $N(K(r))$ and
$\lk(m, K(r))=1$.
The symbol $\ell$ denotes the longitude of $K(r)$ in Theorem ~\ref{MainTheorem1}, which is oriented so that
it is homologous to $K(r)$ in $N(K(r))$.

\item If $K(r)$ consists of two components, $K_1$ and $K_2$,
then $\ell_{i,0}$ and $m_i$ denotes the preferred longitude and the
meridian of $K_i$ for $i=1,2$.
The symbol $\ell_i$ denotes the longitude of $K_i$ in
Theorem ~\ref{MainTheorem1}, which is oriented so that
it is homologous to $K_i$ in $N(K_i)$  for $i=1,2$.
We denote by $\ell$ the union $\ell_1\cup \ell_2$,
and call it the longitude of $K(r)$ in Theorem ~\ref{MainTheorem1}.
\end{enumerate}

We express $\ell$ in terms of $\ell_0$ and $m$ when $K(r)$
is a knot, and express $\ell_i$ in terms of
$\ell_{i,0}$ and $m_i$ when $K(r)$ is a two-component link.
To this end, we use the alternating diagram of $K(r)$
associated with the continued fraction expansion
described at the beginning of Section ~\ref{sec:canonical},
and consider the decomposition of $(S^3,K(r))$ into $n+2$ blocks,
$\{\block_k\}_{k=0}^{n+1}$,
as illustrated in Figure ~\ref{fig.decomposition_link_diagram}.
The {\it first block} $\block_0$ and the {\it last block} $\block_{n+1}$
are upper and lower bridges, respectively, and
a {\it middle block} $\block_k$ ($1\le k \le n$) is a $4$-braid,
where the second and the third strings
form $a_k$ right hand half-twists when $k$ is odd;
the first and the second strings
form $a_k$ left hand half-twists when $k$ is even.
For each middle block $\block_k$,
we define $\delta_k$ to be $0$ or $1$ according to whether
the orientations of the strings forming the half-twists in $\block_k$
are parallel or not (see Figure ~\ref{fig.linking_number}).
Then the longitude $\ell$ in Theorem ~\ref{MainTheorem1}
is given by the following proposition.

\begin{figure}[h]
\begin{center}
\includegraphics{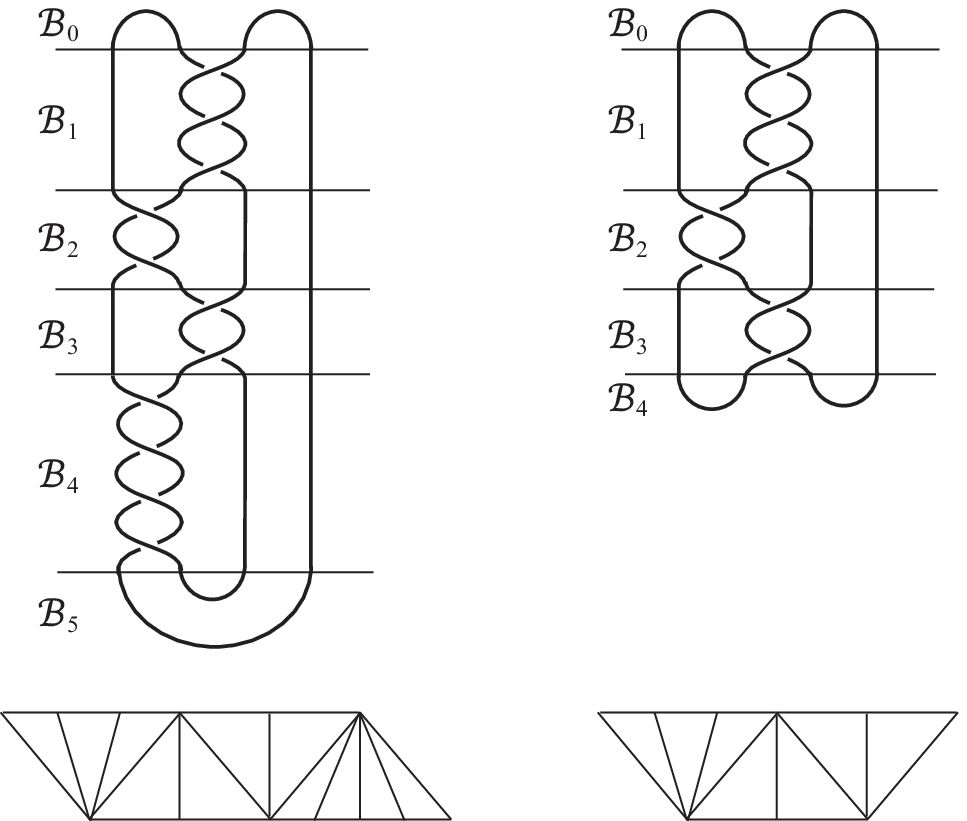}
\end{center}
\caption{\label{fig.decomposition_link_diagram}
The decomposition $\{\block_k\}_{k=0}^{n+1}$ of $(S^3,K(r))$.}
\end{figure}

\begin{proposition}
\label{prop:longitude}
Under the above notation,
the linking number $\lk(\ell,K(r))$,
for the longitude $\ell$ in Theorem ~\ref{MainTheorem1},
is given by the following formula:
\[
\lk(\ell, K(r)) =
\begin{cases}
2\left(\sum_{k=1}^n \delta_k (-1)^{k-1}a_k\right) & \mbox{if $n$ is odd,}\\
2\left(1+\sum_{k=1}^{n} \delta_k (-1)^{k-1} a_k\right) & \mbox{if $n$ is even.}
\end{cases}
\]
From this linking number, $\ell$ is determined by the following formula.
\begin{enumerate}[\indent \rm (1)]
\item If $K(r)$ is a knot, then
\[
[\ell]=[\ell_0]+\lk(\ell,K(r))[m] \in H_1(\partial N(K(r))).
\]

\item If $K(r)$ is a link, then for $i=1,2$,
\[
[\ell_i]=[\ell_{i,0}]+\frac{1}{2}(\lk(\ell,K(r))-2\lk(K_1,K_2))[m_i]
\in H_1(\partial N(K_i)).
\]
\end{enumerate}
\end{proposition}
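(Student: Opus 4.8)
The plan is to compute the homology class of the longitude $\ell$ coming from the cusp description in Proposition \ref{geometric_shape_of_cusp}, by carrying out an essentially combinatorial linking-number calculation on the alternating diagram of $K(r)$. The key observation is that the longitude $\ell$ (given by $\rho_r((P_+P_-)^2)$ or $\rho_r(P_+P_-)$) is, in the ideal triangulation $\DD(r)$, the union of the ``core tunnels'' of the two rational tangles, which is naturally identified with the arc in $S^3$ running along the preferred bridge position of $K(r)$. Hence $\ell$ differs from the preferred longitude $\ell_0$ only by meridional twisting, and to pin down the coefficient it suffices to compute the linking number $\lk(\ell,K(r))$.

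First I would fix the alternating diagram of $K(r)$ associated with the continued fraction expansion $[a_1,\dots,a_n]$ and the block decomposition $\{\block_k\}_{k=0}^{n+1}$ of Figure \ref{fig.decomposition_link_diagram}, and identify $\ell$ with the specific arc/curve that tracks the two core tunnels through this diagram. Then I would push $\ell$ off $K(r)$ and count the signed crossings between $\ell$ and $K(r)$ block by block. In a middle block $\block_k$ the two half-twisted strings contribute $\pm a_k$ crossings with the parallel copy, the sign being governed by whether the block is a right-hand or left-hand twist region (i.e. by the parity of $k$, giving the $(-1)^{k-1}$), and the contribution is nonzero exactly when the two strings are coherently oriented, which is precisely the definition of $\delta_k$ in Figure \ref{fig.linking_number}; this yields the sum $\sum_k \delta_k(-1)^{k-1}a_k$. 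The overall factor of $2$ comes from the fact that $\ell$ meets each twist region along two strands (equivalently, from the two-fold covering $\alpha_s\to\beta_s$ built into the whole construction), and the extra $+1$ in the even case accounts for the contribution of the bridge blocks $\block_0,\block_{n+1}$, whose relative orientation flips when $n$ is even. Once $\lk(\ell,K(r))$ is known, parts (1) and (2) follow formally: for a knot, $[\ell]-[\ell_0]=\lk(\ell,K(r))[m]$ since $\lk(\ell_0,K(r))=0$; for a two-component link one must subtract the inter-component linking $\lk(K_1,K_2)$ contributed by the other component and distribute the result evenly between the two boundary tori, producing the coefficient $\tfrac12(\lk(\ell,K(r))-2\lk(K_1,K_2))$.

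The main obstacle will be bookkeeping the orientations carefully: the quantities $\delta_k$ depend on how a chosen orientation of $K(r)$ propagates through the braid blocks, and one must check that the arcs constituting $\ell$ (the core tunnels) are oriented compatibly with the ``homologous to $K(r)$ in $N(K(r))$'' convention, so that the signs in the block count line up with $(-1)^{k-1}$ and the parity-dependent constant term is correctly $0$ or $1$. A secondary point to verify is that the curve on the cusp torus described in Proposition \ref{geometric_shape_of_cusp}(ii) really is isotopic in $S^3-K(r)$ to the union of core tunnels (this is where \cite{Sakuma-Weeks} and \cite{Gueritaud} enter), since the linking-number computation is carried out on the latter. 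Everything else is a routine, if somewhat intricate, crossing count on the standard alternating diagram.
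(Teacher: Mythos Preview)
Your overall strategy---draw $\ell$ on the alternating diagram and compute $\lk(\ell,K(r))$ by a block-by-block signed crossing count---is exactly what the paper does. The formal derivation of (1) and (2) from the linking number is also essentially what the paper uses (for (2) it writes $\lk(\ell,K(r))=2\lk(\ell_i,K_i)+2\lk(K_1,K_2)$ using the symmetry interchanging the components).

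The real gap is your identification of $\ell$. The longitude $\ell$ is \emph{not} the union of the two core tunnels: the core tunnels are edges of the ideal triangulation $\DD(r)$ joining cusps, whereas $\ell$ is a curve on the peripheral torus. What the paper actually does, in place of your one-line claim, is a careful combinatorial description of $\ell$ on the ``model'' peripheral torus $\cup_k\partial_p\eblock_k$ (Lemmas~\ref{lem:logitude-picture1} and~\ref{lem:logitude-picture2}): using the zigzag line in $\CC$ from Proposition~\ref{prop.cusp-shape}, it identifies $\ell$ with a specific union of arcs $\xi_k$ (traces of isotopies of pivot-slope edges through the odd-indexed twist blocks and the bridge blocks) and $\eta_k$ (arcs running down the even-indexed twist blocks past each non-pivot edge). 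Only with this explicit picture in hand can one read off the crossing contributions. Your ``core tunnel'' heuristic does not give a curve on $\partial N(K(r))$ at all, so the crossing count cannot get started.

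Two smaller points. First, the extra $+1$ when $n$ is even does not come from the bridge blocks $\block_0,\block_{n+1}$ ``flipping orientation''; in the paper it arises from the bottom pair $\block_n\cup\block_{n+1}$, where for even $n$ the description of $\ell$ uses the modified arcs $\eta_n',\xi_{n+1}'$ and the contribution becomes $2(\delta_n(-1)^{n-1}a_n+1)$. Second, the factor of $2$ has nothing to do with the covering $\alpha_s\to\beta_s$; it comes from the concrete picture of $\ell$ on $\partial_p\eblock_k$ (each crossing in a parallel twist region contributes $\pm 2$ to the count, as in Figure~\ref{fig.linking_number}). Once you replace your identification of $\ell$ by the precise description in Lemmas~\ref{lem:logitude-picture1}--\ref{lem:logitude-picture2}, the rest of your plan goes through and matches the paper.
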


\begin{remark}
\label{remark:prop:longitude}
{\rm
If every $a_i$ is even and $n$ is odd, then $K(r)$ is a knot
and the longitude $\ell$ is given by the following simple formula:
\[
[\ell]=[\ell_0]+2[m].
\]
}
\end{remark}

Our task is to draw the longitude $\ell$ on the boundary of
the link exterior $E(K(r)):=S^3-\interior N(K(r))$.
To this end,
let $\{\eblock_k\}_{k=0}^{n+1}$ be the decomposition of
the knot exterior $E(K(r))$
induced by the decomposition $\{\block_k\}_{k=0}^{n+1}$ of $(S^3,K(r))$.
We denote by $\partial_p \eblock_k$ the intersection of $\partial E(K(r))$
with $\partial \eblock_k$.
Then $\partial_p \eblock_k$ is a disjoint union of four or two annuli
according to whether $k\in \{1,2,\dots, n\}$ or $k\in \{0,n+1\}$.

Set $c_0=0$, $c_k=\sum_{i=1}^k a_i$ and
$\sigma_i^{(k)}=\sigma_{c_{k-1}+i}$ ($1\le i\le a_k)$.
Then the $k$-th middle block $\block_k$  ($1\le k\le n$) corresponds to the subchain
\[
\Sigma^{(k)}(r):=(\sigma_1^{(k)}, \sigma_2^{(k)}, \dots,\sigma_{a_k}^{(k)})
\]
of $\Sigma(r)$.
The subchain $\Sigma^{(k)}(r)$ contains a unique {\it pivot}, $s^{(k)}_*$,
i.e., a vertex of $\Sigma(r)$
which is shared by at least three triangles in $\Sigma(r)$.
We denote the remaining vertices of $\Sigma^{(k)}(r)$
by $s^{(k)}_0, s^{(k)}_1, \dots, s^{(k)}_{a_k}$,
where they sit in $\Sigma^{(k)}(r)$ in this order.
The Farey triangle $\sigma_i^{(k)}$ is spanned by the vertices
$\{s^{(k)}_*,s^{(k)}_{i-1},s^{(k)}_i\}$.
By the description of $\DD(r)$ as a quotient of $\hat\DD(r)$ in Section ~\ref{sec:canonical},
we may regard $\hat\DD(r)$
as a decomposition of the middle part
$\cup_{i=1}^k\eblock_k$ into truncated ideal tertrahedra:
the collapsing of the edges of slopes $\infty$ and $r$
has the effect of adding the first and the last blocks
$\eblock_0$ and $\eblock_{n+1}$.
To be precise, the following hold.
\begin{enumerate}[\indent \rm (1)]
\item For each $k\in\{1,2,\dots, n\}$,
the pair of edges of the pivot slope $s^{(k)}_*$ is represented by
the pair of straight horizontal arcs
in the upper boundary of the $k$-th middle block
$\eblock_k$ as illustrated in Figure ~\ref{fig.middle_block}.
It is isotopic to the
straight horizontal arcs
in the lower boundary of $\eblock_k$
as illustrated in Figure ~\ref{fig.middle_block}.
An isotopy between these two representatives determines
four mutually disjoint arcs, $\xi_k$,
properly embedded in $\partial_p \eblock_k$.
(To be precise, $\xi_k$ is the intersection with $\partial_p \eblock_k$ of
two properly embedded disks in $\eblock_k$ realizing the isotopy.)

\item For each $k\in\{1,2,\dots, n\}$ and $i\in \{0,1,\dots, a_k\}$,
the pair of edges of slope $s^{(k)}_i$ are represented by the pair of
straight horizontal arcs
which sit in the level sphere between
the $i$-th and $i+1$-th
crossings in $\block_k$,
as illustrated in Figure ~\ref{fig.middle_block}.
Each of the four corners of the ideal triangles in $\trg(\sigma_i^{(k)})$
bounded by an edge of slope $s^{(k)}_{i-1}$ and an edge of slope $s^{(k)}_{i}$
determines an arc in $\partial_p \eblock_k$
joining an end point of an edge of slope $s^{(k)}_{i-1}$ and
an end point of an edge of slope $s^{(k)}_{i}$.
The union of these arcs, where
$\sigma_i^{(k)}$ runs over the Farey triangles of $\Sigma^{(k)}(r)$,
forms four properly embedded arcs in $\partial_p\eblock_k$,
which is disjoint from $\xi_k$.
We denote by $\eta_k$ the union of these four arcs in $\partial_p\eblock_k$.

\item
In the top block $\eblock_0$,
each of the two edges of slope $\infty$ in $\eblock_0\cap\eblock_1$
shrinks to a point,
and the two edges of slope $0$ in $\eblock_0\cap\eblock_1$
are isotopic to the upper tunnel.
Let $\xi_0$ be the pair of arcs properly embedded in $\partial_p \eblock_0$
determined by the isotopy (see Figure ~\ref{fig.top_block}).

\item
In the bottom block
$\eblock_{n+1}$,
each of the two edges of slope $r$ in $\eblock_n\cap\eblock_{n+1}$
shrinks to a point,
and the two edges of slope $[a_1,a_2,\dots,a_{n-1}]$
in $\eblock_n\cap\eblock_{n+1}$
are isotopic to the lower tunnel.
Let $\xi_{n+1}$ be the pair of arcs properly embedded in $\partial_p \eblock_k$
determined by the isotopy.
\end{enumerate}
To be more precise,
the (actual) cusp torus is identified with a quotient of the
``model'' torus $\cup_{k=0}^{n+1}\partial_p \eblock_k$,
where each arc component of
$\xi_k$ ($k=0,\dots, n+1$) shrinks to a point.

\begin{figure}[h]
\begin{center}
\includegraphics{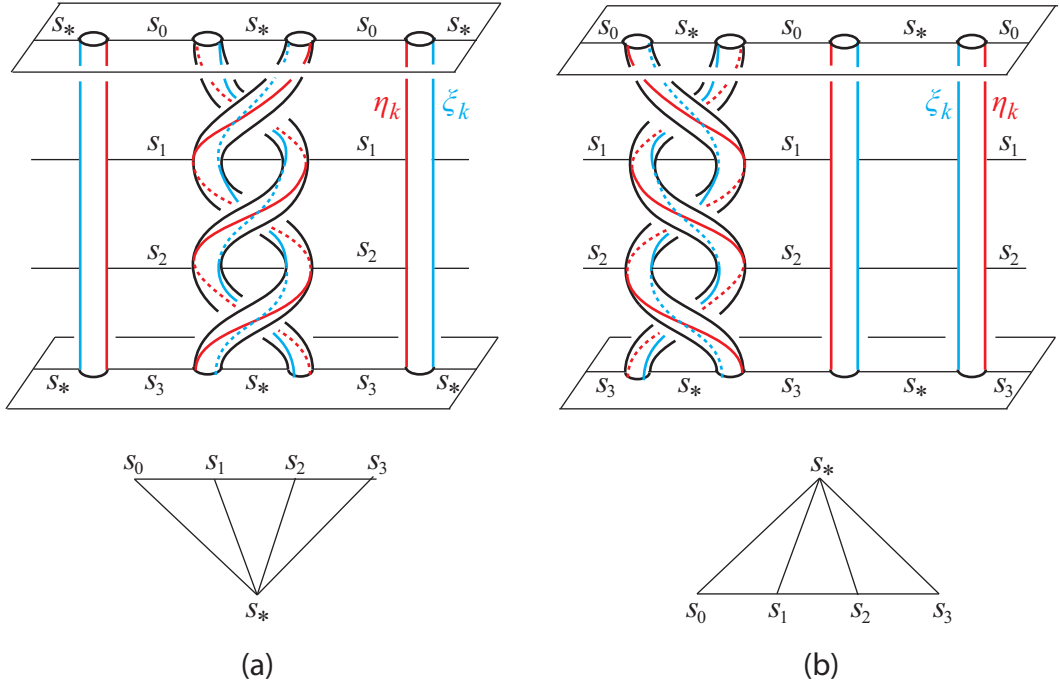}
\end{center}
\caption{\label{fig.middle_block}
The middle block $\eblock_k$ (a) when $k$ is odd, and (b) when $k$ is even.
The labels $s_*$ and $s_j$ for the straight horizontal arcs
are the slopes of the edges,
where $s_*$ and $s_j$ are abbreviations for $s_*^{(k)}$ and $s_j^{(k)}$,
respectively.
The set $\xi_k$ consists of four vertical arcs joining the endpoints of the edges
of slope $s_*^{(k)}$ in the upper boundary of $\eblock_k$
and the endpoints of the edges
of slope $s_*^{(k)}$ in the lower boundary of $\eblock_k$.
The set $\eta_k$ consists of four vertical arcs joining the endpoints of
the edges
of slope $s_0^{(k)}$ in the upper boundary of $\eblock_k$
and the endpoints of the edges
of slope $s_{a_k}^{(k)}$ in the lower boundary of $\eblock_k$,
passing through the endpoints of the edges
of slope $s_i^{(k)}$ ($0<i<a_k$).
}
\end{figure}

\begin{figure}[h]
\begin{center}
\includegraphics{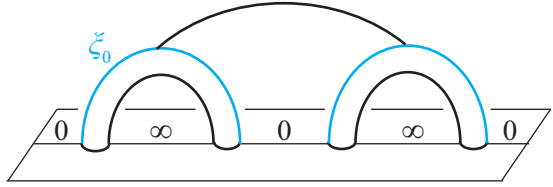}
\end{center}
\caption{\label{fig.top_block}
The top block $\eblock_0$.
The set $\xi_0$ is the union of the two arcs on $\partial_p \eblock_0$
joining the endpoints of the edges of slope $0$ in the lower boundary
$\eblock_0\cap\eblock_1$ of $\eblock_0$.
}
\end{figure}

\begin{lemma}
\label{lem:logitude-picture1}
Suppose that $n$ is an odd integer $2n_0+1$.
Then under the above identification of the cusp torus
with a quotient of the model torus $\cup_{k=0}^{n+1}\partial_p \eblock_k$,
the longitude $\ell$ is equal to {\rm (}the image of{\rm )} the union of
the following arcs (see Figure ~\ref{fig.longitude}):
\[
\xi_0\cup \xi_1,\ \eta_2,\ \xi_3,\ \eta_4,\ \cdots,\
\xi_{2n_0-1},\ \eta_{2n_0},\ \xi_{2n_0+1}\cup \xi_{2n_0+2}.
\]
\end{lemma}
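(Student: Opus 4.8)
The plan is to match the combinatorial model-torus picture of $\ell$ with the geometric description of the longitude obtained in Section~\ref{sec:proof}. As $\ell$ is a longitude of $K(r)$ in the sense of Section~\ref{sec:longitude}, it meets each $\partial_p\eblock_k$ in one properly embedded arc in every annulus component, so that what has to be pinned down is the isotopy class of each such arc. For this, recall from the proof of Proposition~\ref{prop.cusp-shape}(2), together with Proposition~\ref{geometric_shape_of_cusp}, that the zigzag edge-path of the cusp triangulation joining $c(\rho_r(P_0)),c(\rho_r(P_1)),\dots,c(\rho_r(P_d))$ projects to a longitude of the orbifold $\OO(r)$, where $P_0,\dots,P_d$ are elliptic generators of slopes $s_0,\dots,s_d$, the $s_i$ being the vertices of $\Sigma_0(r)$ such that the hanging edges $\ee_1,\dots,\ee_d\in\EE_1(r)$ are dual to the Farey edges $\langle s_{i-1},s_i\rangle$; and that, by the definition in Section~\ref{Statement}, $\ell$ is the preimage of this arc in the cusp torus of $S^3-K(r)$. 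So it suffices to locate this edge-path, together with the dihedral reflections $\rho_r(P_-),\rho_r(P_+)$ assembling it into a loop, arc by arc inside the model torus $\cup_{k=0}^{n+1}\partial_p\eblock_k$.

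The second step is to make $\EE_1(r)$ explicit in terms of the block decomposition. The chain $\Sigma(r)$ is the concatenation of the fans $\Sigma^{(k)}(r)$ about the pivots $s^{(k)}_*$, with $s^{(k)}_0=s^{(k-1)}_*$ and $s^{(k)}_{a_k}=s^{(k+1)}_*$; for each $k$ the geodesic $\langle\infty,r\rangle$ separates $s^{(k)}_*$ from the non-pivot vertices $s^{(k)}_0,\dots,s^{(k)}_{a_k}$, and the latter all lie on a single side of $\langle\infty,r\rangle$, namely the $I_1(r)$-side if $k$ is even and the $I_2(r)$-side if $k$ is odd (this is the standard alternation of the continued fraction $[a_1,\dots,a_n]$, and here $n=2n_0+1$ odd is used so that the first and last fans both sit on the $I_2(r)$-side). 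Inspecting the dual tree $\TT_0(r)$, which is the simple path dual to $\Sigma_0(r)$, one checks that the hanging edge at the vertex dual to $\sigma^{(k)}_i$ is dual to $\langle s^{(k)}_{i-1},s^{(k)}_i\rangle$, and that, apart from the two exceptional edges $\ee_-,\ee_+$ (which have $\infty$, respectively $r$, in their $\Omega^{0-}$-sets and so lie in neither $\EE_1(r)$ nor $\EE_2(r)$), $\EE(r)$ is exactly the union over $k$ and $i$ of these; since $\Omega^{0-}$ of such an edge lies on the non-pivot side, it belongs to $\EE_1(r)$ precisely when $k$ is even. Hence the longitude edge-path runs, within each even middle block $k$, through the slopes $s^{(k)}_0,s^{(k)}_1,\dots,s^{(k)}_{a_k}$ in this order, and the identity $s^{(k)}_{a_k}=s^{(k+2)}_0$ stitches consecutive even blocks together across the intervening odd block, so that the whole path runs from the pivot $s^{(1)}_*$ of block $1$ to the pivot $s^{(n)}_*$ of block $n$.

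The third step translates this into the model torus. The cusp-triangulation edge dual to $\langle s^{(k)}_{i-1},s^{(k)}_i\rangle$ is realized in $\partial_p\eblock_k$ by the corner arc of the ideal triangle $\trg(\sigma^{(k)}_i)$ between the edges of slopes $s^{(k)}_{i-1}$ and $s^{(k)}_i$, and the union of these corner arcs over $1\le i\le a_k$ is, by definition, $\eta_k$; so in each even middle block the longitude restricts to $\eta_k$. In an odd middle block $k$ the longitude uses no fan edge of $\Sigma^{(k)}(r)$ and must instead pass from the end of $\eta_{k-1}$, which lies on the upper representative of the pivot edge of slope $s^{(k)}_*=s^{(k-1)}_{a_{k-1}}$ in $\eblock_k$, across $\eblock_k$ to the initial point of $\eta_{k+1}$, which lies on the lower representative of the same pivot edge of slope $s^{(k)}_*=s^{(k+1)}_0$; by the definition of $\xi_k$ this crossing is exactly $\xi_k$. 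Finally, at the two ends the collapsing of the edges of slope $\infty$ (respectively $r$) and the attendant folding of $\trg(\sigma_2)$ (respectively $\trg(\sigma_{c-1})$) described in Section~\ref{sec:canonical} route the longitude, beyond the end of $\eta_2$ (respectively $\eta_{2n_0}$) and through the pivot edges of the odd blocks $1$ and $n$, via $\xi_1\cup\xi_0$ (respectively $\xi_{2n_0+1}\cup\xi_{2n_0+2}$). Assembling $\xi_0\cup\xi_1,\ \eta_2,\ \xi_3,\ \eta_4,\ \dots,\ \eta_{2n_0},\ \xi_{2n_0+1}\cup\xi_{2n_0+2}$ then yields exactly the union of arcs in the statement.

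The main obstacle is the bookkeeping of the third step: verifying that the geometric edge-path $c(\rho_r(P_0))\to\cdots\to c(\rho_r(P_d))$ closes up on the cusp torus precisely through the stated arcs $\xi_k$ (for $k$ odd) and through $\xi_0\cup\xi_1$, $\xi_{2n_0+1}\cup\xi_{2n_0+2}$ at the two ends, rather than through some other arcs merely isotopic to these; this forces one to push the folding construction of Section~\ref{sec:canonical} through the identification, from Proposition~\ref{geometric_shape_of_cusp}(3), of $\rho_r(P_-)$ and $\rho_r(P_+)$ with the $\pi$-rotations about the centers of the $L_-$- and $L_+$-edges of $\RBL{\rho_r}{\sigma_2}$ and $\RBL{\rho_r}{\sigma_{c-2}}$. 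The parity argument of the second step is the other delicate point, but it reduces to the standard correspondence between the continued fraction $[a_1,\dots,a_n]$ and the turning pattern of the Farey chain $\Sigma(r)$.
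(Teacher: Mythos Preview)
Your proposal is correct and follows essentially the same route as the paper: identify the longitude via the zigzag $c(\rho_r(P_0)),\dots,c(\rho_r(P_d))$ from Proposition~\ref{prop.cusp-shape}(2), observe by the continued-fraction parity that the slopes $s_0,\dots,s_d$ are exactly the non-pivot vertices of the even subchains $\Sigma^{(2k)}(r)$ (with $s_0=s^{(1)}_*$ and $s_d=s^{(n)}_*$), and then read off that even blocks contribute $\eta_{2k}$, odd intermediate blocks contribute $\xi_{2k+1}$, and the ends contribute $\xi_0\cup\xi_1$ and $\xi_{2n_0+1}\cup\xi_{2n_0+2}$. Your final ``obstacle'' paragraph is overcautious: once the edge-path is located slope by slope, the identification with the named arcs $\xi_k,\eta_k$ is by their very definitions, not merely up to isotopy.
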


\begin{proof}
Let $s_0,s_1,\dots, s_d$ be the slopes in
the proof of Proposition ~\ref{prop.cusp-shape}.
Recall that the longitude of $\partial \OO(r)$ is obtained as the image
of the zigzag line in $\CC$ spanned by
$c(\rho_r(P_0)), c(\rho_r(P_1)), \dots, c(\rho_r(P_{d}))$,
where $P_i$ is an elliptic generator with $s(P_i)=s_i$
as in the proof of Proposition ~\ref{prop.cusp-shape}.
We note that the following hold.
\begin{enumerate}[\indent \rm (1)]
\item The set $\{s_0,s_1,\dots, s_d\}$ consists of
the vertices of a subchain $\Sigma^{(2k)}(r)$
different from the pivot $s^{(2k)}_*$,
where $2k$ runs over the set $\{2,4,\dots,2n_0\}$.

\item The first slope $s_0$ is equal the pivot $s^{(1)}_*$ of the subchain $\Sigma^{(1)}(r)$.

\item The last vertex $s_d$ is equal the pivot $s^{(n)}_*$ of the subchain $\Sigma^{(n)}(r)$.
\end{enumerate}
In particular, the first $(a_2+1)$ vertices $s_0, s_1, \dots, s_{a_2}$ are equal to the
vertices $s^{(2)}_0, s^{(2)}_1, \dots, s^{(2)}_{a_2}$ of the subchain $\Sigma^{(2)}(r)$
different from the pivot $s^{(2)}_*$.
Thus the sub zigzag line in $\CC$
spanned by $c(\rho_r(P_0)), c(\rho_r(P_1)), \dots, c(\rho_r(P_{a_1}))$
corresponds to (a component of) $\eta_2\subset\partial_p \eblock_2$.
Similarly, for each even integer $2k\in \{2,4,\dots,2n_0\}$,
the sub zigzag line in $\CC$
spanned by $c(\rho_r(P_i))$'s, where $s(P_i)$ runs over the vertices of
the subchain $\Sigma^{(2k)}(r)$ different from the pivot $s^{(2k)}_*$,
corresponds to
(a component of) $\eta_{2k}\subset\partial_p \eblock_{2k}$.

For each $2k\in \{2,4,\dots,2n_0-2\}$, the final slope $s^{(2k)}_{a_{2k}}$
of the subchain $\Sigma^{(2k)}(r)$  is equal to the first slope
$s^{(2k+2)}_{0}$ of the subchain $\Sigma^{(2k+2)}(r)$,
and they are equal to the pivot
$s^{(2k+1)}_*$ of the subchain $\Sigma^{(2k+1)}(r)$.
Thus  each edge of slope $s^{(2k)}_{a_{2k}}$,
which lies in the lower boundary of $\partial_p\eblock_{2k}$,
is isotopic in $\eblock_{2k+1}$ to an
edge of slope $s^{(2k+2)}_{0}$,
which lies in the upper boundary of the block $\eblock_{2k+2}$.
So, in order to describe the longitude
in the model torus $\cup_{k=0}^{n+1}\partial_p \eblock_k$,
the end points of $\eta_{2k}$ in
the lower boundary of $\partial_p \eblock_{2k}$
should be joined with
the end points of $\eta_{2k+2}$ in
the upper boundary of $\partial_p \eblock_{2k+2}$
by the trace, $\xi_{2k+1}\subset \partial_p \eblock_{2k+1}$,
of the isotopy (see Figure ~\ref{fig.longitude}).

The first slope $s^{(2)}_{0}$ of the subchain
$\Sigma^{(2)}(r)$ is equal to the
the pivot $s^{(1)}_*$ of the subchain $\Sigma^{(1)}(r)$.
Thus the edges of slope $s^{(2)}_{0}$,
which lie in the upper boundary of the block $\eblock_{2}$,
are equal to the edges of slope $s^{(1)}_*$
which lie in the lower boundary of the block $\eblock_{1}$.
The latter edges are isotopic in $\eblock_{1}$ to the edges of slope $s^{(1)}_*$
which lie in the upper boundary of $\eblock_{1}$.
These in turn are isotopic in $\eblock_{0}$ to the upper bridges.
So, the endpoints of  $\eta_2$ in the upper boundary
are joined each other
by the trace, $\xi_0\cup\xi_1 \subset \partial_p\eblock_{0}\cup \partial_p\eblock_{1}$,
of the isotopy (see Figure ~\ref{fig.longitude}).
Similarly,
the end points of $\eta_{2n_0}$ in
the lower boundary of $\partial_p \eblock_{2n_0}$
should be joined by the trace, $\xi_{2n_0+1}\cup\xi_{2n_0+2}
\subset \partial_p\eblock_{2n_0+1}\cup \partial_p\eblock_{2n_0+2}$,
of the isotopy.
Hence, the longitude $\ell$ is as described in the lemma.
\end{proof}

\begin{figure}[h]
\begin{center}
\includegraphics{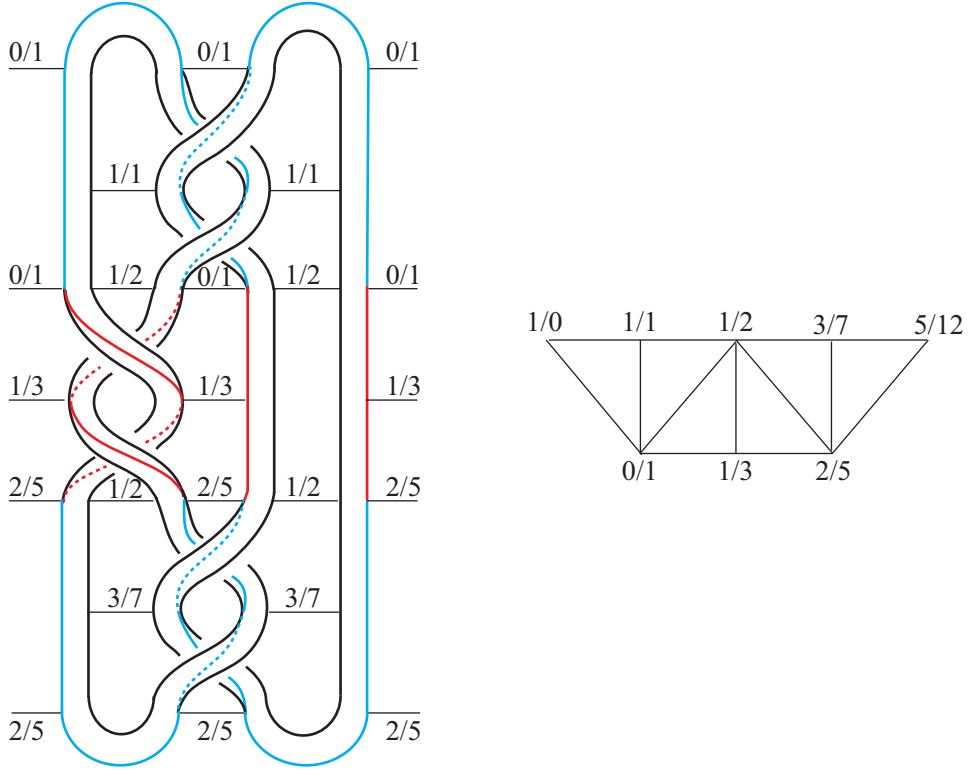}
\end{center}
\caption{\label{fig.longitude}
The whole picture of the longitude $\ell$ when $n=3$.
In this case, $\ell$ is the union of
$\xi_0\cup \xi_1$, $\eta_2$ and $\xi_{4}\cup \xi_{4}$.
}
\end{figure}

In order to treat the case when
$n$ is an even integer $2n_0$,
we need to introduce the following notation (see Figure ~\ref{fig.bottom_block}).
\begin{enumerate}[\indent \rm (1)]
\item By the symbol $\eta_n'$,
we denote the sub arcs of $\eta_n$
bounded by the end points of $\eta_n$ in the upper boundary of
$\partial_p\eblock_{n}$
and the endpoints of the edges of slope
$r':=[a_1,a_2,\dots, a_{n-1}, a_n-1]$.

\item The edges of slope $r'$ in $\eblock_{n}$ is isotopic in
$\eblock_{n}\cup \eblock_{n+1}$ to the lower tunnel.
The symbol $\xi_{n+1}'$ denotes the arcs in
$\partial_p\eblock_{n}\cup \partial_p\eblock_{n+1}$
obtained from the isotopy.
\end{enumerate}

\begin{figure}[h]
\begin{center}
\includegraphics{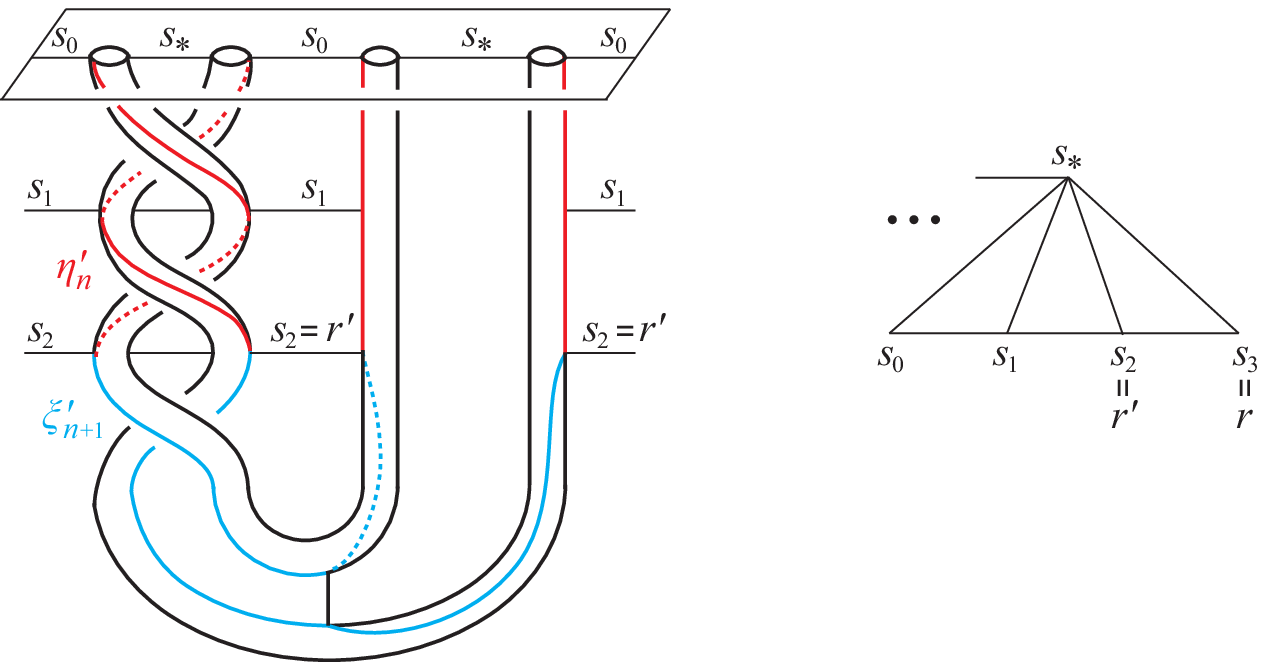}
\end{center}
\caption{\label{fig.bottom_block}
$\eblock_{n} \cup \eblock_{n+1}$
when $n$ is even.}
\end{figure}

By the proof of Lemma ~\ref{lem:logitude-picture1},
we obtain the following lemma.

\begin{lemma}
\label{lem:logitude-picture2}
Suppose that $n$ is an even integer $2n_0$.
Then under the identification of the cusp torus
with a quotient of the model torus $\cup_{k=0}^{n+1}\partial_p \eblock_k$,
the longitude $\ell$ is equal to {\rm (}the image of{\rm )} the union
of the arcs
\[
\xi_0\cup \xi_1,\ \eta_2,\ \xi_3,\ \eta_4,\ \cdots,\
\xi_{2n_0-1},\ \eta_{2n_0}',\  \xi_{2n_0+1}'.
\]
\end{lemma}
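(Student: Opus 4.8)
The plan is to run the argument of the proof of Lemma~\ref{lem:logitude-picture1} with a single modification, confined to the bottom of the link diagram. As there, the starting object is the broken line in $\CC$ spanned by the points $c(\rho_r(P_0)), c(\rho_r(P_1)), \dots, c(\rho_r(P_d))$ supplied by the proof of Proposition~\ref{prop.cusp-shape}(2), where $P_i$ is an elliptic generator of slope $s_i$ and $s_0, s_1, \dots, s_d$ are the vertices of $\Sigma_0(r)$ dual to the edges $\ee_1, \dots, \ee_d$ of $\EE_1(r)$; this broken line projects to the longitude of $\OO(r)$, and under the identification of the cusp torus with the model torus $\cup_{k=0}^{n+1}\partial_p\eblock_k$ it describes $\ell$. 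First I would check, exactly as for Lemma~\ref{lem:logitude-picture1}, that $s_0$ is the pivot $s^{(1)}_*$ of $\Sigma^{(1)}(r)$; that the top of the broken line is closed up by the trace $\xi_0\cup\xi_1$ of the isotopy carrying the edges of slope $s^{(1)}_*$ through $\eblock_1$ and then $\eblock_0$ to the upper bridges; and that for each even integer $2k$ with $2\le 2k\le 2n_0-2$ the portion of the broken line whose slopes exhaust the non-pivot vertices of $\Sigma^{(2k)}(r)$ describes a component of $\eta_{2k}\subset\partial_p\eblock_{2k}$, consecutive such arcs being spliced through $\eblock_{2k+1}$ by the traces $\xi_{2k+1}$, since the final vertex of $\Sigma^{(2k)}(r)$ equals both the pivot $s^{(2k+1)}_*$ and the initial vertex of $\Sigma^{(2k+2)}(r)$. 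None of this depends on the parity of $n$.

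The one genuinely new ingredient concerns the last even middle block $\block_n=\block_{2n_0}$. Since $n$ is even this is the final middle block, hence adjacent to the bottom block $\block_{n+1}$, and in passing from $\hat\DD(r)$ to $\DD(r)$ the pair of edges of slope $r=[a_1,\dots,a_n]$ is collapsed to an ideal vertex. Consequently $r$ does not lie in $\Sigma_0(r)^{(0)}$, so the non-pivot vertices of $\Sigma^{(n)}(r)$ surviving in $\Sigma_0(r)$ reach only the penultimate one, namely $s^{(n)}_{a_n-1}=r'=[a_1,\dots,a_{n-1},a_n-1]$, which is precisely the right endpoint $r_1$ of $I_1(r)$. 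I would therefore argue that $s_d=r'$, so that the stretch of the longitude inside $\eblock_n$ is a component not of the full arc $\eta_n$ but of the truncated arc $\eta_n'=\eta_{2n_0}'$ introduced just before the lemma, which runs from the endpoints of the slope-$s^{(n)}_0$ edges in the upper boundary of $\eblock_n$ down to the endpoints of the slope-$r'$ edges. Finally, since those slope-$r'$ edges are isotopic through $\eblock_n\cup\eblock_{n+1}$ to the lower tunnel, the two loose ends of $\eta_{2n_0}'$ must be joined by the trace $\xi_{n+1}'=\xi_{2n_0+1}'$ of that isotopy; splicing the pieces together yields that $\ell$ is the image of the union
\[
\xi_0\cup\xi_1,\ \eta_2,\ \xi_3,\ \eta_4,\ \dots,\ \xi_{2n_0-1},\ \eta_{2n_0}',\ \xi_{2n_0+1}',
\]
as asserted, the alternating string $\eta_2,\xi_3,\dots,\xi_{2n_0-1}$ being vacuous in the extreme case $n=2$.

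The main obstacle I anticipate is the combinatorial bookkeeping that pins down the endpoint $s_d$: one must verify that, for $n$ even, collapsing the slope-$r$ edges deletes exactly the single vertex $r$ from the tail of the chain $\Sigma^{(n)}(r)$, so that the last non-pivot vertex remaining in $\Sigma_0(r)$ is $s^{(n)}_{a_n-1}=r'=r_1$, and that the defining condition $\Omega^{0-}(\ee)\subset I_1(r)=[0,r_1]$ for membership in $\EE_1(r)$ forces the final edge $\ee_d$ to be dual to a Farey edge with endpoint $r_1$, hence $s_d=r_1$. Once this single identification is secured, the remainder is a line-by-line transcription of the proof of Lemma~\ref{lem:logitude-picture1}, now with the arcs $\eta_{2n_0}'$ and $\xi_{2n_0+1}'$ describing the part of $\ell$ that runs through $\eblock_n$ and $\eblock_{n+1}$.
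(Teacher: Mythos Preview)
Your proposal is correct and follows exactly the approach the paper intends: the paper's own proof is the single sentence ``By the proof of Lemma~\ref{lem:logitude-picture1}, we obtain the following lemma,'' and you have faithfully supplied the omitted details, correctly isolating the only change (that $s_d=s^{(n)}_{a_n-1}=r'$ rather than $s^{(n)}_*$, forcing $\eta_{2n_0}'$ and $\xi_{2n_0+1}'$ at the bottom in place of $\xi_{2n_0+1}\cup\xi_{2n_0+2}$). The bookkeeping you flag as the main obstacle is indeed the crux, and your resolution of it---that $r=s^{(n)}_{a_n}$ is the sole vertex of $\Sigma^{(n)}(r)$ lost in passing to $\Sigma_0(r)$---is correct.
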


\begin{proof}[Proof of Proposition ~\ref{prop:longitude}]
We evaluate the linking number $\lk(\ell, K(r))$
by counting the number of crossings with sign
where $\ell$ runs below $K(r)$.
Then the formula for the linking number follows from the following observations.
\begin{enumerate}[\indent \rm (i)]
\item If $2\le k\le n-1$, then
we can easily check that the contribution of the crossings in
$\block_k$ is equal to $2\delta_k (-1)^{k-1}a_k$
(see Figure ~\ref{fig.linking_number}).

\item The contribution of the crossings in
$\block_0\cup \block_1$ is equal to $2\delta_k (-1)^{k-1}a_k$
with $k=1$.

\item The contribution of the crossings in
$\block_n\cup \block_{n+1}$ is equal to
$2\delta_n (-1)^{n-1}a_n$ or
$2(\delta_n (-1)^{n-1} a_n +  1)$
according to whether $n$ is odd or even.
\end{enumerate}

The formula for the longitude obvious if $K(r)$ is a knot.
Suppose $K(r)=K_1\cup K_2$.
Then by using the symmetry, we have the following identity
for $\{i,j\}=\{1,2\}$.
\begin{align*}
\lk(\ell,K(r))
&=\lk(\ell_1\cup\ell_2,K_1\cup K_2)\\
&=2\lk(\ell_i,K_i)+2\lk(\ell_i,K_j)\\
&=2\lk(\ell_i,K_i)+2\lk(K_1,K_2)
\end{align*}
Hence we obtain the desired formula for $\ell_i$.
\end{proof}

\begin{figure}[h]
\begin{center}
\includegraphics{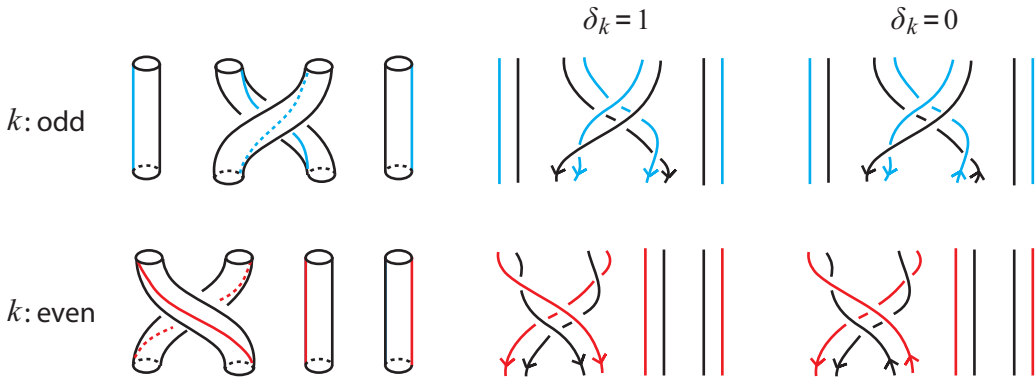}
\end{center}
\caption{\label{fig.linking_number}
If $\delta_k=1$, then the contribution of each of the crossings in
$\block_k$ to the linking number is equal to
$2$ or $-2$ according as $k$ is odd or even.
If $\delta_k=0$, then there is no contribution.}
\end{figure}

\section{Application to end invariants of $SL(2,\CC)$-characters}
\label{sec:end_invarinat}

By extending the concept of a geometrically infinite end
of a Kleinian group,
Bowditch ~\cite{Bowditch2}
introduced the notion of the end invariants of a type-preserving
$SL(2,\CC)$-representation of $\pi_1(\ptorus)$.
Tan, Wong and Zhang ~\cite{Tan_Wong_Zhang_1, Tan_Wong_Zhang_6} extended this notion (with slight modification)
to an arbitrary $SL(2,\CC)$-representation of $\pi_1(\ptorus)$.

To recall the definition of end invariants,
let $\curve$ be the set of free homotopy classes
of essential simple loops on $\ptorus$.
Then as described in Section ~\ref{Statement},
$\curve$ is identified with $\QQQ$, the vertex set of
the Farey tessellation $\DD$ by the rule
$s\mapsto \beta_s$.
The projective lamination space $\plamination$ of $\ptorus$
is then identified with $\RRR:=\RR\cup\{\infty\}$
and contains $\curve$ as the dense subset
of rational points.

\begin{definition}
\label{def_end_invariant}
{\rm
Let $\rho$ be an $SL(2,\CC)$-representation of $\pi_1(\ptorus)$.

(1) An element $X\in \plamination$ is an {\it end invariant}
of $\rho$ if there exists a sequence of distinct elements
$X_n\in\curve$ such that
\begin{enumerate}[\indent \rm (i)]
\item $X_n\to X$, and

\item $\{|\tr \rho(X_n)|\}_n$ is bounded from above.
\end{enumerate}

(2) $\einv(\rho)$ denotes the set of end invariants of $\rho$.
}
\end{definition}

Note that $\einv(\rho)$ is actually determined by the $PSL(2,\CC)$-representation
induced by $\rho$,
because $|\tr \rho(X_n)|$ is determined by the $PSL(2,\CC)$-representation.
Note also that the condition that
$\{|\tr \rho(X_n)|\}_n$ is bounded from above
is equivalent to the condition
that the (real) hyperbolic translation lengths of
the isometries $\rho(X_n)$ of $\HH^3$
are bounded from above.

Tan, Wong and Zhang ~\cite{Tan_Wong_Zhang_1, Tan_Wong_Zhang_6}
showed that $\einv(\rho)$
is a closed subset of $\plamination$ and
proved various interesting properties of $\einv(\rho)$,
including a characterization of
those representations $\rho$
with $\einv(\rho)=\emptyset$ or $\plamination$,
generalizing corresponding results of Bowditch ~\cite{Bowditch2}
for type-preserving representations.
They also proposed an interesting conjecture
\cite[Conjecture ~1.8]{Tan_Wong_Zhang_6}
concerning possible homeomorphism types of $\einv(\rho)$.
The following is a modified version of the conjecture
which Tan ~\cite{Tan} informed to the authors.

\begin{conjecture}
\label{conj_TWZ}
{\rm
Suppose $\einv(\rho)$ has at least two accumulation points.
Then $\einv(\rho)$ is either a Cantor set of $\plamination$
or all of $\plamination$.
}
\end{conjecture}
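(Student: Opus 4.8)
\emph{The plan.}
Conjecture~\ref{conj_TWZ} is open in general, so what I would actually prove is its validity for the representations $\rho_r$ considered above, i.e.\ for the holonomy representations of hyperbolic $2$-bridge link complements; this fits the application to end invariants advertised in the introduction. The strategy is to identify $\einv(\rho_r)$ completely, the expectation being that
\[
\einv(\rho_r)=\Lambda(\RGPP{r}),
\]
the limit set of $\RGPP{r}$ acting on $\HH^2\cup\RRR$. Recall that $R$ is a fundamental domain for $\RGPP{r}$ with $\overline{R}\cap\RRR=I_1(r)\cup I_2(r)$ a union of two non-degenerate closed intervals; thus $R$ has infinite area, and one checks via Poincar\'e's polygon theorem that $\RGPP{r}\cong\RGP{\infty}*\RGP{r}$ is a free product of two infinite dihedral groups. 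Hence $\RGPP{r}$ is a non-elementary geometrically finite Fuchsian group of the second kind, so $\Lambda(\RGPP{r})$ is a Cantor subset of $\RRR=\plamination$; being nowhere dense it is a proper subset, so the identification above gives the first alternative of Conjecture~\ref{conj_TWZ} for $\rho=\rho_r$.

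\emph{The two inclusions.}
For $\Lambda(\RGPP{r})\subseteq\einv(\rho_r)$: by Lemma~\ref{2-bridge-markoff} we have $\phi_r(\infty)=\phi_r(r)=0$, and by Theorem~\ref{previous_results}(3) the loop $\alpha_s=\beta_s^2$ is null-homotopic in $S^3-K(r)$ whenever $s$ lies in the $\RGPP{r}$-orbit of $\infty$ or of $r$; then $\rho_r(\beta_s)$ is an order-two element of $PSL(2,\CC)$, so $\phi_r(s)=\tr\tilde\rho_r(\beta_s)=0$. These slopes are precisely the parabolic fixed points of $\RGPP{r}$, hence dense in $\Lambda(\RGPP{r})$, so every $X\in\Lambda(\RGPP{r})$ is a limit of a sequence of distinct such slopes, each with $|\tr\rho_r(\beta_s)|=0$; thus $X\in\einv(\rho_r)$ by Definition~\ref{def_end_invariant}. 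For the reverse inclusion, let $X\in\RRR\setminus\Lambda(\RGPP{r})$, and suppose first that $X$ does not lie in the $\RGPP{r}$-orbit of an endpoint of $I_1(r)\cup I_2(r)$; then $X$ lies in the interior of a single translate $\gamma(\overline{R}\cap\RRR)$, so a neighbourhood $U$ of $X$ in $\RRR$ is contained in that translate. If $s_n\in\curve$ are distinct with $s_n\to X$, then $s_n\in U$ for large $n$, so $s_n':=\gamma^{-1}(s_n)$ are distinct elements of $I_1(r)\cup I_2(r)$; by Theorem~\ref{previous_results}(1) the loops $\alpha_{s_n}$ and $\alpha_{s_n'}$ are freely homotopic in $S^3-K(r)$, whence $|\tr\rho_r(\beta_{s_n})|=|\tr\rho_r(\beta_{s_n'})|$. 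Now the argument proving assertion (i) in the proof of Key Lemma~\ref{key-lemma}, run with an arbitrary bound $C$ in place of $2$ --- using that $|\tr A|\le C$ bounds the real translation length of $A$, hence of $A^2$, by a constant depending only on $C$, together with Theorem~\ref{prop:conjugacy}(1),(3) and Lemma~\ref{length-specturum} --- shows that $\{\,s\in I_1(r)\cup I_2(r): |\tr\rho_r(\beta_s)|\le C\,\}$ is finite for every $C$. Therefore $\{|\tr\rho_r(\beta_{s_n})|\}_n$ is unbounded, so $X\notin\einv(\rho_r)$.

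\emph{The main obstacle.}
The delicate point is the excluded case above, namely that the $\RGPP{r}$-orbit of the four endpoints $0,r_1,r_2,1$ of $I_1(r)\cup I_2(r)$ consists of rational points (elements of $\curve$), and one must show directly that none of them is an end invariant; this needs the fact that at such a point only finitely many translates of $\overline{R}\cap\RRR$ accumulate, so that the pull-back trick of the previous paragraph still applies from whichever side the approximating slopes come. Likewise, the clean equality $\einv(\rho_r)=\Lambda(\RGPP{r})$ rests on the geometric finiteness of $\RGPP{r}$: density of parabolic fixed points in $\Lambda(\RGPP{r})$, and the perfect, totally disconnected structure of $\Lambda(\RGPP{r})$ that follows from $\overline{R}\cap\RRR$ being a union of non-degenerate intervals. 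Pinning these facts down rigorously --- and, along the way, recording exactly which rational slopes are end invariants --- is where most of the work lies; granting them, $\einv(\rho_r)=\Lambda(\RGPP{r})$ is a Cantor set, hence it has uncountably many (in particular at least two) accumulation points and is a proper subset of $\plamination$, which is exactly Conjecture~\ref{conj_TWZ} for $\rho=\rho_r$.
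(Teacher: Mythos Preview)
You are right that Conjecture~\ref{conj_TWZ} is open; the paper does not prove it but instead establishes the special case you target, stated there as Theorem~\ref{MainTheorem3}: $\einv(\rho_r)=\Lambda(\RGPP{r})$. Your argument is essentially correct and close in spirit to the paper's, but the paper organises both inclusions more efficiently and thereby dissolves what you call the ``main obstacle''.

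For $\einv(\rho_r)\subseteq\Lambda(\RGPP{r})$, rather than fixing $X\in\RRR\setminus\Lambda(\RGPP{r})$ and analysing which translates of $I_1(r)\cup I_2(r)$ meet a neighbourhood of $X$ (with the endpoint case singled out), the paper simply observes that $\einv(\rho_r)$ is $\RGPP{r}$-invariant (via Theorem~\ref{previous_results}(1)) and, by the proof of Key Lemma~\ref{key-lemma} with an arbitrary bound $C$ in place of~$2$ exactly as you suggest, disjoint from the closed set $I_1(r)\cup I_2(r)$. Since $I_1(r)\cup I_2(r)$ is a fundamental domain for the $\RGPP{r}$-action on the domain of discontinuity $\RRR\setminus\Lambda(\RGPP{r})$, invariance gives $\einv(\rho_r)\cap\bigl(\RRR\setminus\Lambda(\RGPP{r})\bigr)=\emptyset$ at once, with no separate treatment of the endpoints $0,r_1,r_2,1$.

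For $\Lambda(\RGPP{r})\subseteq\einv(\rho_r)$ the paper is equally streamlined: instead of producing, for each $X\in\Lambda(\RGPP{r})$, an explicit sequence of slopes in the $\RGPP{r}$-orbit of $\{\infty,r\}$ with $\phi_r=0$, it notes that $\infty,r\in\einv(\rho_r)$ (from $\phi_r(\infty)=\phi_r(r)=0$), that $\einv(\rho_r)$ is closed and $\RGPP{r}$-invariant, and that $\Lambda(\RGPP{r})$ is the \emph{smallest} non-empty closed $\RGPP{r}$-invariant subset of~$\RRR$; hence $\Lambda(\RGPP{r})\subseteq\einv(\rho_r)$. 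Your density-of-parabolic-points route is valid, but using invariance and minimality as global tools is what makes the endpoint worry disappear.
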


They constructed a family of representations $\rho$
which have Cantor sets as $\einv(\rho)$,
and proved the following supporting evidence of the conjecture
(see \cite[Theorem ~1.7]{Tan_Wong_Zhang_6}).

\begin{theorem}
\label{discrete_TWZ}
Let $\rho:\pi_1(\ptorus)\to SL(2,\CC)$ be {\rm discrete}
in the sense that the set
$\{\tr(\rho(X))\svert X\in\curve\}$
is discrete in $\CC$.
Then if $\einv(\rho)$ has at least three elements,
then $\einv(\rho)$ is either a Cantor set of $\plamination$
or all of $\plamination$.
\end{theorem}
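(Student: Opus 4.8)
The plan is to reduce the statement to two structural claims about the closed set $\einv(\rho)\subseteq\plamination\cong\RRR$ and then assemble them with point-set topology. The claims are: \textbf{(I)} under the hypotheses, $\einv(\rho)$ has no isolated points; and \textbf{(II)} if $\einv(\rho)$ contains a nondegenerate subarc of $\RRR$, then $\einv(\rho)=\RRR$. Granting these, the proof finishes as follows: $\plamination\cong\RRR$ is a circle and $\einv(\rho)$ is closed, hence compact; if it is not all of $\plamination$, then by (II) it contains no subarc, so each of its connected components is a single point, i.e.\ it is totally disconnected; being also nonempty, compact, and (by (I)) perfect, it is a Cantor set by Brouwer's characterization. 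Thus everything reduces to (I) and (II).

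The tools are those of Sections \ref{sec:orbifold}--\ref{sec:Markoff}, applied to the Markoff map $\phi(s)=\tr(\tilde\rho(\beta_s))$ of a lift $\tilde\rho:\pi_1(\ptorus)\to SL(2,\CC)$ of $\rho$ (which exists because $\pi_1(\ptorus)$ is free), plus one input from discreteness. Precisely, I would use: \textbf{(a)} the edge relation $\phi(s_0)+\phi(s_3)=\phi(s_1)\phi(s_2)$ for adjacent Farey triangles, together with the relative Markoff equation $\phi(s_0)^2+\phi(s_1)^2+\phi(s_2)^2-\phi(s_0)\phi(s_1)\phi(s_2)=\kappa$ with $\kappa=\kappa(\rho)$ fixed; \textbf{(b)} Bowditch's Fibonacci-type growth estimate, by which $|\phi|$ grows without bound along any edge-path of $\TT$ that leaves the ``small'' region, so that the hypotheses of Proposition \ref{key-proposition} for an oriented edge $\ee$ (finiteness of $\{s\in\Omega^-(\ee)\svert|\phi(s)|\le2\}$ and absence of zeros of $\phi$ on $\Omega^{0-}(\ee)$) are \emph{equivalent} to the absence of end invariants among the ends below $\ee$ --- this equivalence being the content of the work of Bowditch and of Tan--Wong--Zhang that underlies Proposition \ref{key-proposition}; and \textbf{(c)} the discreteness hypothesis in the form: for every $N>0$ the set $\{\phi(s)\svert s\in\QQQ,\ |\phi(s)|\le N\}$ is a discrete subset of a compact disk, hence finite. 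By (c), $\phi$ takes only finitely many values on $S_N:=\{s\in\QQQ\svert|\phi(s)|\le N\}$, so, inserted into the edge relation, the recursion along any portion of $\TT$ on which $\phi$ is bounded becomes a dynamical system on a finite set, hence eventually periodic along every ray.

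For (I), suppose some $X\in\einv(\rho)$ is isolated. As $|\einv(\rho)|\ge3$, pick two further end invariants $Y,Z$, let $v$ be the centre of the tripod in $\TT$ with ends $X,Y,Z$, and let $\gamma_X,\gamma_Y,\gamma_Z$ be the edge-rays from $v$ to these ends. Using (b) with the bounded-trace sequences witnessing $X,Y,Z$, one first checks that $\phi$ is bounded on the regions met by $\gamma_X\cup\gamma_Y\cup\gamma_Z$, and then, by (c), that it takes finitely many values there. Now run the recursion outward along $\gamma_X$: isolatedness of $X$ makes Proposition \ref{key-proposition} applicable to every edge of $\TT$ leaving $\gamma_X$ beyond a high enough vertex, so $|\phi|$ grows in each such branch cone; meanwhile the triple of traces at the $k$-th vertex of $\gamma_X$ lies in a fixed finite set and is therefore eventually periodic in $k$, which forces the branch cones (whose limiting arcs shrink to $X$) into a periodic configuration with bounded-trace boundary data. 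Such a periodic configuration, via the edge relation, produces bounded-trace slopes accumulating at a point of $\RRR$ distinct from $X$, contradicting isolatedness. The step of converting an eventually periodic bounded pattern along the ray $\gamma_X$ into genuine accumulation transverse to it is, I expect, the main obstacle; it is where both the finiteness coming from discreteness and the use of the \emph{third} end invariant --- needed to pin $\phi$ down in two transverse directions at $v$, which is why two end invariants do not suffice --- are essential, and I would carry it out by a careful analysis in the spirit of Bowditch's study of the set $\Omega_\phi(2)$ and its extension by Tan--Wong--Zhang.

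For (II), suppose $J\subseteq\einv(\rho)$ is a nondegenerate closed subarc. Every $X\in J$ is an accumulation point of some $S_N$, so $J=\bigcup_N(\closure{S_N}\cap J)$, and by the Baire category theorem applied inside the complete metric space $J$, some $\closure{S_N}\cap J$ has nonempty interior in $J$; thus $S_N$ is dense in some subarc $J'$ for a \emph{fixed} $N$, and by (c) $\phi$ takes finitely many values on $S_N$. Choosing an edge $e'$ of $\TT$ all of whose tail-side ends lie in $J'$, one has that $S_N$ meets every region of $\Omega^-(e')$; then (b) prevents $|\phi|$ from being large anywhere on $\Omega^{0-}(e')$, so $|\phi|\le N'$ there for some $N'$, and by (c) $\phi$ takes finitely many values on $\Omega^{0-}(e')$. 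Propagating this finite value set outward across $e'$ by means of the edge relation and the Markoff equation, and invoking discreteness once more to control the value sets produced at successive layers, one concludes that $|\phi|$ is bounded on all of $\QQQ$, whence every point of $\RRR$ is an end invariant and $\einv(\rho)=\RRR$. I expect (II) to be less delicate than (I), but ruling out the escape of the successive value sets will still require care. Finally, combining (I), (II), and the topological assembly of the first paragraph yields that $\einv(\rho)$ is either all of $\plamination$ or a Cantor subset of it.
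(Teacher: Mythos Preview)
The paper does not prove this theorem: it is quoted, with the citation \cite[Theorem~1.7]{Tan_Wong_Zhang_6}, as a result of Tan, Wong and Zhang, and serves only as background for the authors' own Theorems~\ref{MainTheorem3} and~\ref{MainTheorem4}. There is therefore no proof in the paper against which to compare your proposal.

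As a standalone assessment: your overall architecture --- reduce to showing that $\einv(\rho)$ is perfect (no isolated points) and that a nondegenerate subarc forces $\einv(\rho)=\plamination$, then invoke Brouwer's characterization of Cantor sets --- is the natural one and is indeed the strategy used in \cite{Tan_Wong_Zhang_6}. However, what you have written is a plan rather than a proof. In your argument for (I), the step you yourself identify as ``the main obstacle'' --- converting an eventually periodic bounded trace pattern along the ray $\gamma_X$ into accumulation of bounded-trace slopes at a point \emph{other} than $X$ --- is not carried out, and the heuristic you offer (periodicity of branch cones producing transverse accumulation) is not obviously correct: periodicity of the triple of traces along $\gamma_X$ does not by itself manufacture new end invariants, since the branch cones may all satisfy the growth hypotheses of Proposition~\ref{key-proposition} and hence contribute no bounded-trace accumulation at all. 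Similarly, in (II), ``propagating this finite value set outward across $e'$'' and ``ruling out the escape of the successive value sets'' are asserted rather than demonstrated; the edge relation lets traces grow, and discreteness alone does not prevent the value set from enlarging at each layer. These are precisely the places where the substantive work lies in \cite{Tan_Wong_Zhang_6}, and your sketch does not supply the missing arguments.
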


The above theorem together with Lemma ~\ref{length-specturum}
implies that $\einv(\rho_r)$
of the representation $\rho_r$ induced by
the holonomy representation of
a hyperbolic $2$-bridge link $K(r)$
is a Cantor set.
But it does not give us the exact description of $\einv(\rho_r)$.
By using the proof of the main results in Section ~\ref{sec:proof},
we can explicitly determine the end invariants $\einv(\rho_r)$.
To state the theorem, recall that
the {\it limit set} $\Lambda(\RGPP{r})$
of the group $\RGPP{r}$
is the set of accumulation points in the closure of $\HH^2$
of the $\RGPP{r}$-orbit of a point in $\HH^2$.

\begin{theorem}
\label{MainTheorem3}
For a hyperbolic $2$-bridge link $K(r)$,
the set of end invariants $\einv(\rho_r)$
of the holonomy representation $\rho_r$
is equal to the limit set $\Lambda(\RGPP{r})$
of the group $\RGPP{r}$.
\end{theorem}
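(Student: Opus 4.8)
The plan is to prove the two inclusions $\einv(\rho_r)\subseteq\Lambda(\RGPP r)$ and $\Lambda(\RGPP r)\subseteq\einv(\rho_r)$, using as the bridge between loop combinatorics and hyperbolic geometry the fact that $\RGPP r$-equivalent slopes give loops that are freely homotopic (up to orientation) in $S^3-K(r)$, hence isometries of $\HH^3$ with the same trace modulus. First I would record this invariance precisely: by Theorem~\ref{previous_results}(1)--(2), if $s,s'\in\QQQ$ lie in one $\RGPP r$-orbit then $\alpha_s$ and $\alpha_{s'}$ are unoriented-homotopic in $S^3-K(r)$, so $\rho_r(\alpha_s)$ is conjugate to $\rho_r(\alpha_{s'})^{\pm1}$; using $\alpha_s=\beta_s^2$ and $\tr\rho_r(\alpha_s)=\phi_r(s)^2-2$ this gives $|\phi_r(s)|=|\phi_r(s')|$. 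Combined with the uniqueness statement in Theorem~\ref{previous_results}(2), it follows that $|\phi_r(s)|=|\phi_r(s_0)|$ for every $s\in\QQQ$, where $s_0\in I_1(r)\cup I_2(r)\cup\{\infty,r\}$ is the $\RGPP r$-representative of $s$.

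The technical heart is the claim that for every $C>0$ the set $S_C:=\{\,s\in I_1(r)\cup I_2(r)\cup\{\infty,r\}\svert |\phi_r(s)|\le C\,\}$ is finite. This is proved exactly as assertion~(i) in the proof of Key Lemma~\ref{key-lemma}, with $C$ in place of $2$: if infinitely many $s\in (I_1(r)\cup I_2(r))\cap\QQ$ had $|\phi_r(s)|\le C$, then by Theorem~\ref{prop:conjugacy}(1)--(3), after discarding finitely many of them, the loops $\alpha_s$ are nontrivial, non-peripheral and mutually non-homotopic in $S^3-K(r)$, hence (as $\rho_r$ is the faithful holonomy of the complete structure and the manifold is torsion-free) represented by mutually distinct closed geodesics; the bound $|\tr\rho_r(\alpha_s)|=|\phi_r(s)^2-2|\le C^2+2$ forces the lengths of these geodesics to be uniformly bounded above, contradicting the discreteness of the marked length spectrum of the geometrically finite manifold $S^3-K(r)$ (Lemma~\ref{length-specturum}).

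For $\einv(\rho_r)\subseteq\Lambda(\RGPP r)$, given $X\in\einv(\rho_r)$ I would pick distinct $X_n\in\QQQ=\curve$ with $X_n\to X$ in $\plamination=\RRR$ and $|\phi_r(X_n)|\le C$; the $\RGPP r$-representatives of the $X_n$ then all lie in the finite set $S_C$, so after passing to a subsequence they equal a fixed $s_*$, and the distinct points $X_n$ all lie in the single orbit $\RGPP r\cdot s_*$ while converging to $X$. Since $\RGPP r$ acts discretely on $\HH^2$ (it admits the fundamental domain $R$), every accumulation point of an orbit lies in the limit set, so $X\in\Lambda(\RGPP r)$. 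For the reverse inclusion I would take $s_*=\infty$: then $\phi_r(\infty)=0$ by Lemma~\ref{2-bridge-markoff}, and more generally $\phi_r(s)=0$ for every $s$ in the $\RGPP r$-orbit of $\infty$, since such an $\alpha_s$ is null-homotopic in $S^3-K(r)$ (Theorem~\ref{previous_results}(3)) and irreducibility of $\rho_r$ forces $\rho_r(\beta_s)$ to be elliptic of order $2$. The group $\RGPP r$ is non-elementary, as it contains a parabolic fixing $\infty$ together with a parabolic of $\RGP r$ fixing $r\ne\infty$; hence $\Lambda(\RGPP r)$ is perfect and $\RGPP r\cdot\infty$ is dense in it, so every $X\in\Lambda(\RGPP r)$ is the limit of a sequence of \emph{distinct} points $X_n:=\gamma_n(\infty)\in\QQQ=\curve$. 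Since $|\tr\rho_r(\beta_{X_n})|=|\phi_r(X_n)|=0$ for all $n$, this exhibits $X$ as an end invariant of $\rho_r$, which completes the proof.

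The step I expect to be the main obstacle is the finiteness of $S_C$ for arbitrary $C$: it rests on the full combinatorial classification of simple loops on the $2$-bridge sphere from \cite{lee_sakuma, lee_sakuma_2, lee_sakuma_3, lee_sakuma_4} as packaged in Theorem~\ref{prop:conjugacy}, together with Lemma~\ref{length-specturum}. Once that is available, everything else is soft dynamics of the discrete reflection group $\RGPP r$ (minimality and perfectness of its limit set, and the fact that orbit accumulation points lie in the limit set).
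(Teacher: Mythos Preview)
Your argument is essentially the same as the paper's: both directions rest on the $\RGPP{r}$-invariance coming from Theorem~\ref{previous_results} together with the finiteness established in the proof of Key Lemma~\ref{key-lemma}, and the remaining steps are soft facts about the non-elementary reflection group $\RGPP{r}$. The paper packages the two inclusions slightly more abstractly (using that $\Lambda(\RGPP{r})$ is the smallest non-empty closed invariant set, and that $I_1(r)\cup I_2(r)$ is a fundamental domain for the action on the ordinary set), while you argue by hand via orbit density and orbit accumulation; these are equivalent.

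One small imprecision: from the unoriented homotopy $\alpha_s\simeq\alpha_{s'}$ you correctly deduce that $\rho_r(\alpha_s)$ and $\rho_r(\alpha_{s'})$ are conjugate up to inverse, hence $|\phi_r(s)^2-2|=|\phi_r(s')^2-2|$, but this does \emph{not} immediately give $|\phi_r(s)|=|\phi_r(s')|$ (e.g.\ $\phi=0$ and $\phi=2$ both give $|\phi^2-2|=2$). This is harmless for your proof: if $|\phi_r(X_n)|\le C$ then $|\phi_r((X_n)_0)^2-2|\le C^2+2$, so $|\phi_r((X_n)_0)|\le\sqrt{C^2+4}$, and the representatives still lie in a finite set $S_{C'}$. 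With that adjustment your argument goes through as written.
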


\begin{proof}
Since $\phi_r(\infty)=\phi_r(r)=0$,
we see that
both $\infty$ and $r$ belongs to $\einv(\rho_r)$
(cf. \cite[Lemma ~3.5(b)]{Tan_Wong_Zhang_6}).
By Therorem \ref{previous_results}(1),
$\einv(\rho_r)$ is invariant by $\RGPP{r}$,
and so it is an
$\RGPP{r}$-invariant closed set.
Thus $\einv(\rho_r)$ contains the closure
of the $\einv(\rho_r)$-orbit of $\infty$ and $r$.
Since $\Lambda(\RGPP{r})$ is the smallest non-empty
$\RGPP{r}$-invariant closed set,
$\einv(\rho_r)$ must contain $\Lambda(\RGPP{r})$.
On the other hand, by the proof of Key Lemma ~\ref{key-lemma},
we see that $\einv(\rho_r)$ is disjoint from $I_1(r)\cup I_2(r)$.
Since $I_1(r)\cup I_2(r)$ is a fundamental domain of the action
of $\Lambda(\RGPP{r})$ on the domain of discontinuity,
$\RRR-\Lambda(\RGPP{r})$,
and since $\einv(\rho_r)$ is a $\RGPP{r}$-invariant,
we see that $\einv(\rho_r)$ is disjoint from the
$\RRR-\Lambda(\RGPP{r})$,
i.e., $\einv(\rho_r)\subset \Lambda(\RGPP{r})$.
Hence we have $\einv(\rho_r)=\Lambda(\RGPP{r})$.
\end{proof}

In Bowditch's original definition of the ``set of end invariants'' ~\cite[p.729]{Bowditch2},
the accidental parabolics are also regarded as an end invariant.
(He denotes the set by the symbol $L(\phi)$,
where $\phi$ is a Markoff map.)
By using the classification of
the essential simple loops in the $2$-bridge sphere
which are peripheral in hyperbolic $2$-bridge links complements
(see \cite{lee_sakuma_3, lee_sakuma_4} and
\cite[Theorem ~2.6(1)]{lee_sakuma_5}),
we have the following theorem
for Bowdich's set of end invariants $L(\rho_r):=L(\phi_r)$.

\begin{theorem}
\label{MainTheorem4}
For a hyperbolic $2$-bridge link $K(r)$ with $0<r<1/2$,
Bowditch's set of end invariants $L(\rho_r)$
of the holonomy representation $\rho_r$
is equal to the limit set $\Lambda(\RGPP{r})$
of the group $\RGPP{r}$, except for the following cases.
\begin{enumerate}[\indent \rm (1)]
\item
If $r=2/5$, then
$L(\rho_r)=\Lambda(\RGPP{r})\cup \RGPP{r}\{1/5,3/5\}$.
\item
If $r=n/(2n+1)$ for some integer $n\ge 3$,
then
$L(\rho_r)=\Lambda(\RGPP{r})\cup \RGPP{r}\{(n+1)/(2n+1)\}$.
\item
If $r=2/(2n+1)$ for some integer $n\ge 3$,
then
$L(\rho_r)=\Lambda(\RGPP{r})\cup \RGPP{r}\{1/(2n+1)\}$.
\end{enumerate}
In the exceptional cases,
$L(\rho_r)$ is the union of the Cantor set $\Lambda(\RGPP{r})$ and
infinitely many isolated points.
\end{theorem}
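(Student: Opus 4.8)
The plan is to read off $L(\rho_r)$ by combining Theorem~\ref{MainTheorem3} with the classification of essential simple loops on the $2$-bridge sphere that are peripheral in hyperbolic $2$-bridge link complements. In Bowditch's original definition \cite[p.729]{Bowditch2}, $L(\rho_r)=L(\phi_r)$ is obtained from the set of end invariants $\einv(\rho_r)$ of Definition~\ref{def_end_invariant} by adjoining the slopes of the accidental parabolics, that is, the rational numbers $s$ for which $\rho_r(\beta_s)$ is a parabolic transformation. Since $\rho_r$ is induced by the holonomy representation of $S^3-K(r)$, and since $g^2$ is parabolic if and only if $g$ is parabolic, the condition that $\rho_r(\beta_s)$ be parabolic is equivalent to the condition that $\rho_r(\alpha_s)=\rho_r(\beta_s)^2$ be parabolic; as the induced representation of $\pi_1(S^3-K(r))$ is discrete and faithful, this holds exactly when the simple loop $\alpha_s$ is peripheral in $S^3-K(r)$. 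Using Theorem~\ref{MainTheorem3} I would therefore first record the identity
\[
L(\rho_r)=\Lambda(\RGPP{r})\cup\{s\in\QQQ\svert \alpha_s \text{ is peripheral in } S^3-K(r)\}.
\]

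The next step is to push the peripheral slopes into a fundamental domain of $\RGPP{r}$. By Theorem~\ref{previous_results}(1), if $s,s'$ belong to the same $\RGPP{r}$-orbit then $\alpha_s$ and $\alpha_{s'}$ are freely homotopic (up to orientation) in $S^3-K(r)$, so the set of peripheral slopes is $\RGPP{r}$-invariant. By Theorem~\ref{previous_results}(2) every rational number is $\RGPP{r}$-equivalent to a unique element of $(I_1(r)\cup I_2(r)\cup\{\infty,r\})\cap\QQQ$; since $\phi_r(\infty)=\phi_r(r)=0$ (Lemma~\ref{2-bridge-markoff}), the loops $\alpha_\infty$ and $\alpha_r$ are null-homotopic, hence not peripheral, so every peripheral slope is $\RGPP{r}$-equivalent to one lying in $I_1(r)\cup I_2(r)$. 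Setting $P_r:=\{s\in I_1(r)\cup I_2(r)\svert \alpha_s \text{ is peripheral in } S^3-K(r)\}$ we obtain $L(\rho_r)=\Lambda(\RGPP{r})\cup\RGPP{r}\cdot P_r$. Now I would invoke the classification of peripheral essential simple loops established in \cite{lee_sakuma_3, lee_sakuma_4} (see also \cite[Theorem~2.6(1)]{lee_sakuma_5}): for $0<r<1/2$ it gives $P_r=\emptyset$ unless $r$ is one of the slopes in (1)--(3), in which cases $P_r$ equals $\{1/5,3/5\}$, $\{(n+1)/(2n+1)\}$, $\{1/(2n+1)\}$, respectively. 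Substituting these into the displayed identity yields all the asserted formulas for $L(\rho_r)$.

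It remains to prove the final assertion. A short computation with continued fractions shows that in each exceptional case $P_r$ lies in the interior of $I_1(r)\cup I_2(r)$: for $r=2/5=[2,2]$ we have $I_1(r)=[0,1/3]$ and $I_2(r)=[1/2,1]$, so $1/5\in\interior I_1(r)$ and $3/5\in\interior I_2(r)$; for $r=n/(2n+1)=[2,n]$ we have $I_2(r)=[1/2,1]$ and $1/2<(n+1)/(2n+1)<1$; and for $r=2/(2n+1)=[n,2]$ we have $I_1(r)=[0,1/(n+1)]$ and $0<1/(2n+1)<1/(n+1)$. Since $\interior I_1(r)$ and $\interior I_2(r)$ lie in the domain of discontinuity $\Omega:=\RRR-\Lambda(\RGPP{r})$ and together form the interior of a fundamental domain of the properly discontinuous action of $\RGPP{r}$ on $\Omega$, the point stabilizers of the elements of $P_r$ are trivial, so $\RGPP{r}\cdot P_r$ is a closed discrete subset of $\Omega$, infinite because $\RGPP{r}$ is infinite. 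Hence every $t\in\RGPP{r}\cdot P_r$ has a neighbourhood $U$ in $\RRR$ with $U\subset\Omega$ and $U\cap\RGPP{r}\cdot P_r=\{t\}$; since $U\cap\Lambda(\RGPP{r})=\emptyset$, this shows $t$ is an isolated point of $L(\rho_r)=\Lambda(\RGPP{r})\cup\RGPP{r}\cdot P_r$. As $\Lambda(\RGPP{r})$ is a Cantor set, hence perfect, no point of $\Lambda(\RGPP{r})$ is isolated in $L(\rho_r)$; thus the isolated points of $L(\rho_r)$ are precisely the infinitely many points of $\RGPP{r}\cdot P_r$.

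The one genuinely hard ingredient is the Lee--Sakuma classification of peripheral essential simple loops in $2$-bridge link complements; granting that and Theorem~\ref{MainTheorem3}, all remaining steps --- the identification of $L(\rho_r)\setminus\einv(\rho_r)$ with the set of peripheral slopes, the $\RGPP{r}$-invariance of that set, the reduction to the fundamental domain $I_1(r)\cup I_2(r)$, and the proper discontinuity argument producing the isolated points --- are routine.
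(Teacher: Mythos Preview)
Your proposal is correct and follows exactly the approach the paper indicates: the paper does not give an explicit proof of Theorem~\ref{MainTheorem4} but simply states that it follows from Theorem~\ref{MainTheorem3} together with the Lee--Sakuma classification of peripheral essential simple loops in $2$-bridge link complements \cite{lee_sakuma_3, lee_sakuma_4, lee_sakuma_5}. You have carefully written out the steps the paper leaves implicit---the identification of $L(\rho_r)\setminus\einv(\rho_r)$ with the set of accidental parabolic slopes, the translation to peripheral $\alpha_s$, the $\RGPP{r}$-invariance and reduction to $I_1(r)\cup I_2(r)$, the continued-fraction check that the exceptional slopes lie in the interiors, and the proper-discontinuity argument for the isolated-points claim---and all of these are sound.
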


At the end of this section, we would like to propose the following conjecture,
which is a variation of a special case of \cite[Question D]{Bowditch2}
and \cite[Conjecture 1.9]{Tan_Wong_Zhang_6}.

\begin{conjecture}
\label{conjecture1}
{\rm
Let $\rho:\pi_1(\ptorus)\to PSL(2,\CC)$ be
a type-preserving representation
such that $\einv(\rho)=\Lambda(\RGPP{r})$.
Then $\rho$ is conjugate to the representation $\rho_r$
induced by the holonomy representation of
a hyperbolic $2$-bridge link $K(r)$.
}
\end{conjecture}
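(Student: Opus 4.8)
The conjecture is the converse of Theorem \ref{MainTheorem3}, so the plan is a rigidity argument: given a type-preserving $\rho:\pi_1(\ptorus)\to PSL(2,\CC)$ with $\einv(\rho)=\Lambda(\RGPP{r})$, I want to produce a conjugacy with $\rho_r$. I would proceed in three stages. Stage (i): show that the Markoff map $\phi$ of a type-preserving $SL(2,\CC)$-lift $\tilde\rho$ of $\rho$ satisfies $\phi(\infty)=\phi(r)=0$. Stage (ii): invoke Lemma \ref{2-bridge-markoff} to obtain a descent $\bar\rho:\pi_1(S^3-K(r))\to PSL(2,\CC)$ sending meridians to parabolics. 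Stage (iii): show that the end-invariant hypothesis forces $\bar\rho$ to be discrete and faithful, so that Mostow--Prasad rigidity identifies it, up to conjugacy and complex conjugation, with the holonomy of the complete structure; since the $\ZZ/2\ZZ\oplus\ZZ/2\ZZ$-symmetry and its extension to $\pi_1(\OO)$ are canonical, this yields $\rho$ conjugate to $\rho_r$.

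For Stage (i) I would exploit the special role of $\infty$ and $r$ in $\Lambda(\RGPP{r})=\einv(\rho)$: both are parabolic fixed points of $\RGPP{r}$, so they lie in $\einv(\rho)$, while $\mathrm{int}\,I_1(r)\cup\mathrm{int}\,I_2(r)$ is contained in the domain of discontinuity, hence disjoint from $\einv(\rho)$. Feeding this into the Markoff recursion $\phi(s_0)+\phi(s_3)=\phi(s_1)\phi(s_2)$ along the fan of Farey triangles $\langle\infty,n,n+1\rangle$ issuing from $\infty$, one should see that $\phi(\infty)\ne 0$ would propagate boundedly many small-trace slopes into the gaps of $\Lambda(\RGPP{r})$ abutting $\infty$, in fact infinitely many converging, after applying suitable elements of $\RGPP{r}$, to a point of $\mathrm{int}\,I_1(r)\cup\mathrm{int}\,I_2(r)$ --- contradicting $\einv(\rho)\subset\Lambda(\RGPP{r})$. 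The same recursion at $r$ gives $\phi(r)=0$. Making this precise will likely require a quantitative refinement of Proposition \ref{key-proposition} bounding $\phi$ on an entire complementary gap of $\Lambda(\RGPP{r})$ in terms of its boundary values.

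For Stage (iii), note first that by Theorem \ref{prop:conjugacy} the loops $\alpha_s$ with $s\in I_1(r)\cup I_2(r)$ are, with finitely many exceptions, non-null and non-peripheral and mutually non-homotopic in $S^3-K(r)$; together with $\einv(\rho)\cap(\mathrm{int}\,I_1(r)\cup\mathrm{int}\,I_2(r))=\emptyset$ this says $|\tr\bar\rho(\alpha_s)|\to\infty$ along every sequence of distinct such slopes escaping into the interior. Propagating this growth through the complex-probability map of Section \ref{sec:Markoff} --- essentially re-running the proof of Key Lemma \ref{key-lemma} with $\rho_r$ replaced by $\rho$ --- one obtains that $\{\tr\bar\rho(\alpha_s)\}$ is discrete, so Theorem \ref{discrete_TWZ} applies; the extra information that the missing slopes form exactly the Cantor set $\Lambda(\RGPP{r})$ with its $\RGPP{r}$-combinatorics should then pin $\bar\rho$ down to be discrete, geometrically finite, and without accidental parabolics or elliptics beyond the meridians, and faithful, since a nontrivial kernel would contradict the simple-loop homotopy classification of Theorem \ref{prop:conjugacy}(1). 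Mostow--Prasad rigidity then completes Stage (iii).

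The main obstacle is the rigidity content common to Stages (i) and (iii): converting the set equality $\einv(\rho)=\Lambda(\RGPP{r})$ --- which a priori carries no algebraic information about $\rho$, not even $\RGPP{r}$-equivariance of $\tr\circ\rho$ --- into discreteness together with the vanishing $\phi(\infty)=\phi(r)=0$. This is a special case of Bowditch's Question D on recovering a representation from its set of end invariants, and I expect the decisive step to be a quantitative Bowditch-type estimate controlling the Markoff map throughout a gap of $\Lambda(\RGPP{r})$. A natural fallback is to first prove $\rho$ discrete by an independent argument --- for instance from the Cantor structure of $\einv(\rho)$ via the Tan--Wong--Zhang machinery --- and only afterwards carry out Stages (i)--(iii).
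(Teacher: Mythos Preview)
This statement is posed in the paper as an open \emph{conjecture}; no proof is given. The paper itself remarks that it is ``a variation of a special case of \cite[Question D]{Bowditch2} and \cite[Conjecture 1.9]{Tan_Wong_Zhang_6}'', both of which are open. There is therefore no paper proof to compare your proposal against.

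Your proposal is, accordingly, not a proof but a strategy outline, and you are candid about this. The ``main obstacle'' you identify --- converting the bare set equality $\einv(\rho)=\Lambda(\RGPP{r})$ into the vanishing $\phi(\infty)=\phi(r)=0$ and into discreteness of $\bar\rho$, with no a priori $\RGPP{r}$-equivariance of the trace function --- is exactly what makes the conjecture open, and is indeed a special instance of Bowditch's Question~D. Your three-stage architecture (force $\phi(\infty)=\phi(r)=0$; descend via Lemma~\ref{2-bridge-markoff}; prove discreteness and faithfulness, then invoke Mostow--Prasad) is the natural line of attack, and Stage~(ii) together with the endgame of Stage~(iii) would be routine once the real obstacles are overcome.

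Two remarks on where the genuine gaps lie. In Stage~(i), membership $\infty\in\einv(\rho)$ only supplies a sequence $X_n\to\infty$ with $|\tr\rho(X_n)|$ bounded; it does not by itself force $\phi(\infty)=0$, and your fan-recursion sketch would need a quantitative estimate showing that $\phi(\infty)\ne 0$ produces infinitely many bounded-trace slopes accumulating at an \emph{interior} point of a gap of $\Lambda(\RGPP{r})$. That is precisely the ``quantitative Bowditch-type estimate'' you call for, and no such estimate is presently available. In Stage~(iii), Theorem~\ref{discrete_TWZ} runs in the wrong direction for your purposes (it \emph{assumes} discreteness of the trace set), and your fallback --- deducing discreteness of $\rho$ directly from the Cantor structure of $\einv(\rho)$ --- is itself open in the Tan--Wong--Zhang framework. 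So the proposal correctly isolates the difficulty but does not surmount it; the conjecture remains open.
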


\section{Further discussion}
\label{sec:discussion}

As noted in Section ~\ref{sec:canonical},
in the second author's joint work
with Akiyoshi, Wada and Yamashita ~\cite{ASWY},
it is announced that
that there is a continuous family of hyperbolic cone manifolds
$\{M(r;\theta^-,\theta^+)\}_{0\le \theta^{\pm}\le \pi}$
satisfying the following conditions
(see Preface, in particular Figures ~0.22--0.26, of \cite{ASWY}
and the demonstration in  Wada's software OPTi ~\cite{Wada}).
\begin{enumerate}[\indent \rm (1)]
\item The underlying space of  $M(r;\theta^-,\theta^+)$ is $S^3-K(r)$.

\item The cone axis of $M(r;\theta^-,\theta^+)$ consists of
the core tunnel of $(B^3,t(\infty))$ and that of $(B^3,t(r))$,
where the cone angles are $2\theta^-$ and $2\theta^+$, respectively.
In particular, $M(r;\pi,\pi)$ is the complete hyperbolic manifold $S^3-K(r)$.

\item If $0<\theta^{\pm}<\pi$,
then the combinatorial dual of the ``Ford domain'' of $M(r;\theta^-,\theta^+)$
is homeomorphic to $\hat\DD(r)$.
If $\theta^-=\theta^+=\pi$,
the combinatorial dual of the Ford domain of
$M(r;\pi,\pi)=S^3-K(r)$
is homeomorphic to $\DD(r)$,
i.e., the canonical decomposition of $S^3-K(r)$ is homeomorphic to $\DD(r)$.
\end{enumerate}
Thus the announcement in \cite{ASWY} says that
the collapsing of the edges of slopes $\infty$ and $r$ in $\hat\DD(r)$
is realized geometrically by a continuous family of hyperbolic cone manifolds.

Akiyoshi and the second author tried to prove
the main results in this paper,
by establishing the following natural generalization of
Theorem ~\ref{MainTheorem3}.

\begin{conjecture}
\label{conjecture1}
{\rm
Let $\rho=\rho_{(r;\theta^-,\theta^+)}:\pi_1(\OO)\to PSL(2,\CC)$
be the type-preserving
$PSL(2,\CC)$-representation induced by the holonomy representation
of the hyperbolic cone manifold $M(r;\theta^-,\theta^+)$ with
$0\le \theta_{\pm} \le \pi$.
Then the set $\einv(\rho)$ is disjoint from the
fundamental intervals $I_1(r)\cup I_2(r)$.
}
\end{conjecture}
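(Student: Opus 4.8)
The plan is to run, for $\rho=\rho_{(r;\theta^-,\theta^+)}$, the argument of the proof of Key Lemma~\ref{key-lemma}, whose only ingredient special to the complete structure is the discreteness of the marked length spectrum (Lemma~\ref{length-specturum}); this must be replaced by an analogue for the hyperbolic cone manifold $M=M(r;\theta^-,\theta^+)$. As there, the assertion reduces to the single finiteness statement: for every $N>0$ the set $\{\,s\in I_1(r)\cup I_2(r)\svert\ |\tr\tilde\rho(\beta_s)|\le N\,\}$ is finite, where $\tilde\rho$ is a lift of $\rho|_{\pi_1(\ptorus)}$ to $SL(2,\CC)$; granting this, for any $X$ in the interior of $I_1(r)\cup I_2(r)$ and any sequence of distinct $s_j\to X$ one gets $|\tr\tilde\rho(\beta_{s_j})|\to\infty$, so $X\notin\einv(\rho)$, while the four boundary slopes $0,r_1,r_2,1$ need only a minor supplementary argument. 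The topological input carries over unchanged, since the underlying space of $M$ is $S^3-K(r)$ and there is a surjection $\pi_1(M\setminus\Sigma)\twoheadrightarrow\pi_1(S^3-K(r))$, $\Sigma$ the singular locus: by Theorem~\ref{prop:conjugacy}, for $s\in I_1(r)\cup I_2(r)$ the loop $\alpha_s=\beta_s^2$ is non-trivial in $\pi_1(S^3-K(r))$, is peripheral for at most two such $s$, and outside finitely many pairs the $\alpha_s$ are pairwise non-homotopic in $S^3-K(r)$.

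The facts about $M(r;\theta^-,\theta^+)$ that one would invoke, stated metrically, are: $M$ is a finite-volume hyperbolic cone manifold whose singular locus $\Sigma$ is the union of the two core tunnels of $(B^3,t(\infty))$ and $(B^3,t(r))$, and $\Sigma$ has a tubular neighbourhood of definite radius $r_0>0$ (Hodgson--Kerckhoff tube estimates for cone angle at most $2\pi$). Cone angle at most $2\pi$ makes $M$ locally $\mathrm{CAT}(-1)$, so every free homotopy class of loops in $M$ that is neither trivial nor peripheral is realized by a closed geodesic disjoint from $\Sigma$, and --- arguing as in Lemma~\ref{length-specturum}, with the tubes of $\Sigma$ treated alongside the cusp as the ``thin part'' to be escaped --- for every $L$ only finitely many closed geodesics of $M$ have length at most $L$.

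With this the finiteness statement would follow. If it failed there would be infinitely many distinct $s_j\in I_1(r)\cup I_2(r)$ with $|\tr\tilde\rho(\beta_{s_j})|\le N$, whence, by a standard trace estimate as in the proof of Key Lemma~\ref{key-lemma}, $L(\rho(\alpha_{s_j}))=2L(\rho(\beta_{s_j}))$ would be bounded above. After discarding the finitely many peripheral slopes, each $\alpha_{s_j}$ is non-peripheral and, by Theorem~\ref{prop:conjugacy}(1), non-trivial in $\pi_1(S^3-K(r))$ --- note a meridian of the core tunnel of $(B^3,t(\infty))$ (resp.\ $(B^3,t(r))$) is freely homotopic to $\alpha_\infty$ (resp.\ $\alpha_r$), which is null-homotopic in $S^3-K(r)$, so no $\alpha_{s_j}$ is freely homotopic into $\Sigma$. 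Hence each $\alpha_{s_j}$ is realized by a closed geodesic $\gamma_j$ of $M$ disjoint from $\Sigma$ of length at most $L(\rho(\alpha_{s_j}))$, so the $\gamma_j$ take only finitely many values; then infinitely many $\alpha_{s_j}$ would be freely homotopic in $M$, hence in $S^3-K(r)$, contradicting Theorem~\ref{prop:conjugacy}(3).

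The main obstacle is the geometric input of the second paragraph in the range $0<\theta^{\pm}<\pi$: there the holonomy of $\rho$ is not discrete --- it contains the infinite-order elliptic $\rho(\alpha_\infty)$ --- so the convex-core reasoning behind Lemma~\ref{length-specturum} cannot be quoted but must be recast as a statement about the metric space $M$, which presupposes that $M(r;\theta^-,\theta^+)$ is a genuine, geometrically controlled hyperbolic cone manifold with Ford domain combinatorially $\hat\DD(r)$ --- that is, it presupposes the announcement of \cite{ASWY}. (At $\theta^{\pm}=0$, where $\Sigma$ degenerates to a pair of cusps and $\rho$ is discrete and torsion-free, Lemma~\ref{length-specturum} applies directly, and for cone angle at most $\pi$ the required control is classical; the new content is the open range $0<\theta^{\pm}<\pi$.) One might instead try to propagate the conclusion from $(\theta^-,\theta^+)=(\pi,\pi)$ by continuity in the cone angles, but the expansiveness of the Markoff recursion obstructs any uniform lower bound for $|\tr\tilde\rho_{(r;\theta^-,\theta^+)}(\beta_s)|$ over $s\in I_1(r)\cup I_2(r)$ as the parameters leave $(\pi,\pi)$, so that alone would at best give the conjecture near $(\pi,\pi)$.
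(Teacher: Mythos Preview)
This statement is a \emph{Conjecture} in the paper, explicitly declared open: the paper offers no proof, only a different strategy --- show that the parameter set $J\subset[0,\pi]^2$ where the conclusion holds is nonempty (clear at $(0,0)$), open (Tan, via \cite{Tan_Wong_Zhang_1}), and closed, the last step being the missing one. Your approach is direct rather than deformational: rerun the proof of Key Lemma~\ref{key-lemma} inside the cone manifold, replacing Lemma~\ref{length-specturum} by a $\mathrm{CAT}(-1)$ length-spectrum argument. You correctly flag that this is conditional on the announced but unpublished construction and control of $M(r;\theta^-,\theta^+)$ in \cite{ASWY}; that is exactly the obstruction the paper has in mind, so your write-up is a programme, not a proof, consistent with the paper's assessment.

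There is one further gap you do not flag. You assert that the $\mathrm{CAT}(-1)$ geodesic $\gamma_j$ in $M$ representing the free homotopy class of $\alpha_{s_j}$ has length at most $L(\rho(\alpha_{s_j}))$. Even granting that $\gamma_j$ avoids $\Sigma$ (plausible for cone angle strictly below $2\pi$), its length equals the translation length of the holonomy of \emph{its own} conjugacy class in $\pi_1(M\setminus\Sigma)$; that class need not agree with the class of $\alpha_{s_j}$ there, because the free homotopy in $M$ from $\alpha_{s_j}$ to $\gamma_j$ may sweep across $\Sigma$. Equivalently, you need that curve-shortening from $\alpha_{s_j}$ stays in $M\setminus\Sigma$, a convexity statement about the singular tubes that is believable for cone angle below $2\pi$ but requires its own argument. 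Note also that the boundary case $\theta^-=\theta^+=\pi$ is already Theorem~\ref{MainTheorem3}, so the new content of your sketch is precisely the open range where the holonomy is indiscrete and these cone-manifold subtleties bite.
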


Consider the subset, $J$, of $[0,\pi]\times [0,\pi]$
consisting of those points $(\theta^-,\theta^+)$ for which
the conjecture is valid.
It is obvious that $(0,0)$ belongs to $J$ and so
$J$ is non-empty.
Tan pointed out that \cite{Tan_Wong_Zhang_1}
implies that the set $J$ is open.
So what we need to show is that $J$ is closed.
Though computer experiments seem to support the conjecture,
the conjecture is still open.

\bibstyle{plain}

\bigskip

\end{document}